\documentclass[oneside]{amsart}
\usepackage{geometry}
\usepackage{microtype} 
\usepackage{tikz}
\usetikzlibrary{arrows,calc,matrix,topaths,positioning,scopes,shapes,decorations,decorations.markings,decorations.pathmorphing} 
\usepackage{tkz-euclide}
\usetkzobj{all}
\usepackage{stmaryrd}
\usepackage{verbatim}
\usepackage{float}
\usepackage{textcomp}
\usepackage{amscd,amsfonts}
\usepackage{amsthm,amsmath}
\usepackage{amstext,amssymb}
\usepackage{xcolor}
\usepackage{graphicx}
\usepackage{esint}
\usepackage[hidelinks]{hyperref}
\usepackage{marginnote}
\usepackage[all]{xy}
\reversemarginpar
\usepackage{enumerate}
\usepackage{enumitem}
\newlist{enumD}{enumerate}{1}
\setlist[enumD]{label=(D\arabic*)}
\newlist{enumPL}{enumerate}{1}
\setlist[enumPL]{label=(PL\arabic*)}
\newlist{enumR}{enumerate}{1}
\setlist[enumR]{label=(R\arabic*)}
\newlist{enumEQ}{enumerate}{1}
\setlist[enumEQ]{label=(EQ\arabic*)}
\newlist{enumM}{enumerate}{1}
\setlist[enumM]{label=(M\arabic*)}
\newlist{enumRe}{enumerate}{1}
\setlist[enumRe]{label=(R\arabic*)}


\numberwithin{equation}{section}
\numberwithin{figure}{section}
\theoremstyle{plain}
\newtheorem{thm}{\protect\theoremname}
\theoremstyle{definition}
\newtheorem{defn}[thm]{\protect\definitionname}
\theoremstyle{plain}
\newtheorem{prop}[thm]{\protect\propositionname}
\theoremstyle{remark}
\newtheorem{rem}[thm]{\protect\remarkname}
\theoremstyle{definition}
\newtheorem{example}[thm]{\protect\examplename}
\theoremstyle{plain}
\newtheorem{lem}[thm]{\protect\lemmaname}
\theoremstyle{plain}

\theoremstyle{definition}
\newtheorem{notation}[thm]{\protect\notationname}

\providecommand{\corollaryname}{Corollary}
\providecommand{\definitionname}{Definition}
\providecommand{\examplename}{Example}
\providecommand{\lemmaname}{Lemma}
\providecommand{\propositionname}{Proposition}
\providecommand{\remarkname}{Remark}
\providecommand{\theoremname}{Theorem}
\providecommand{\notationname}{Notation}

\def\g{{\mathfrak g}}

\def\shuff{\operatorname{sh}}
\def\h{{\mathfrak h}}
\def\Hom{\operatorname{Hom}}
\def\Cross{\operatorname{Cross}}
\def\Der{\operatorname{Der}}
\def\End{\operatorname{End}}
\def\Aut{\operatorname{Aut}}
\def\CM{\cat{CM}}
\def\CMinv{\cat{CM_{inv}}}
\def\CMid{\cat{CM_{id}}}
\def\CMGp{\cat{CMGp}}
\def\CMGpinv{\cat{CMGp_{inv}}}
\def\CMGpid{\cat{CMGp_{id}}}

\def\PB{\cat{Pbialg}}
\def\IsoPB{\cat{Pbialg_{inv}}}

\def\BCH{\operatorname{BCH}}
\def\locGp{\cat{CMGp^{loc}}}
\def\Exp{\operatorname{Exp}}
\def\ImU{\cat{Im}(\widehat{\ca{U}})} 
\def\CMinvfin{\cat{CM^{fin}_{inv}}} 
\def\PBcomp{\cat{\widehat{Pbialg}_{inv}}}

\newcommand{\ie}{\emph{i.e.} }
\newcommand{\ot}{\otimes}
\newcommand{\cat}[1]{\mathbf{#1}}
\newcommand{\ca}[1]{\mathcal{#1}}
\def\co{\colon\thinspace}
\newcommand{\PostLie}{\mathcal{P}ost\mathcal{L}ie}
\newcommand{\plprod}{\triangleright}

\newcommand{\K}{\mathbb{K}}

\newcommand{\TB}{\ca{PSB}}
\newcommand{\PSB}{\ca{PSB}}
\newcommand{\Forest}{\ca{F}or}
\newcommand{\bsq}{\begin{tikzpicture}[scale=1,baseline=-2.5pt]
	\draw [black,fill=black] (-.06,-.06) rectangle (.06,.06); 
	\end{tikzpicture}}
\newcommand{\bsqsmall}{\begin{tikzpicture}[scale=.7,baseline=-2pt]
	\draw [black,fill=black] (-.06,-.06) rectangle (.06,.06); 
	\end{tikzpicture}}

\newcommand{\id}{\operatorname{id}}

\tikzstyle{my circle}=[draw, fill, circle, minimum size=3pt, inner sep=0pt]	
\tikzstyle{my circle bis}=[draw, fill, circle, minimum size=3.5pt, inner sep=0pt]	
\tikzstyle{sq}=[draw, fill, rectangle, minimum size=3pt, inner sep=0pt]		

\begin{document}
\title[Crossed morphisms, post-Lie algebras and the post-Lie Magnus expansion]{Crossed morphisms, (integration of) post-Lie algebras and the post-Lie Magnus expansion}

\author{Igor Mencattini}
\address{Universidade de S\~ao Paulo,
	Instituto de Ci\^encias Matem\'aticas e de Computa\c c\~ao,
	Avenida Trabalhador S\~ao-carlense, 400 -
	CEP: 13566-590 - S\~ao Carlos, SP -
	Brazil}
\email{igorre@icmc.usp.br}

\author{Alexandre Quesney}
\address{Universidade de S\~ao Paulo,
	Instituto de Ci\^encias Matem\'aticas e de Computa\c c\~ao,
	Avenida Trabalhador S\~ao-carlense, 400 -
	CEP: 13566-590 - S\~ao Carlos, SP -
	Brazil}
\email{alexquesney.math@noyta.net}

\keywords{Crossed morphism, post-Lie algebra, post-Lie Magnus expansion.}
\subjclass[2000]{16T05,16T10,16T30,17A30,17A50,17B35,17D99.}

\date{\today}

\begin{abstract}
This letter is divided in two parts. In the first one it will
be shown that the datum of a post-Lie product is equivalent to the one of an invertible crossed
morphism between two Lie algebras. Moreover it will be argued that the integration of such
a crossed morphism yields the post-Lie Magnus expansion associated to the original post-Lie
algebra. The second part is devoted to present two combinatorial methods to compute the coefficients of
this remarkable formal series. Both methods are based on special tubings on planar trees.	
\end{abstract}

\maketitle
\tableofcontents

\section{Introduction}
Pre and post-Lie algebras are two classes of non-associative algebras which later on have undergone to an extensive investigation because of the important role they play both in pure an applied mathematics. 
Pre-Lie algebras, also known in the literature under the name of left-symmetric and Vinberg algebras, were introduced in the mid-sixty, almost simultaneously, by Gerstenhaber, see \cite{Gersten}, and Vinberg, see \cite{Vinberg}, the first working on the theory of deformation of associative algebras and second on the theory of convex cones. Since then, they appeared unexpectedly in almost every area of the modern mathematics, from differential geometry, \cite{medina, hessian, bandiera} to combinatorics \cite{BS, CL, CP}, from mathematical physics, see \cite{EFP}, to numerical analysis \cite{Brouder, HLW}, see \cite{burderev,Manchon,EFP1} for comprehensive reviews.
In spite post-Lie algebras have been introduced much more recently by Vallette, see \cite{Val}, and independently by Lundervold and Munthe-Kaas \cite{MKL}, since then they have been deeply studied, both from point of view of pure, see for example \cite{bai-guo-ni, bdv, EFMM, MQS} and of applied mathematics \cite{EFLMMK,MKSV,CEFMK}, see also \cite{KI,F-MK}. 

Recall that a post-Lie Lie algebra is a pair $(\h,\triangleright)$ of a Lie algebra $\h$, whose Lie bracket will be denoted by $[-,-]$, and a bilinear  map $\triangleright:\h\otimes\h\rightarrow\h$ called post-Lie product, satisfying the following two properties:
\begin{enumPL}
\item\label{PL Property 1} $x\triangleright [y,z]=[x\triangleright y,z]+[y,x\triangleright z]$, and
\item\label{PL Property 2} $[x,y]\triangleright z={\rm a}_{\triangleright}(x,y,z)-{\rm a}_{\triangleright}(y,x,z)$, for all $x,y$ and $z$ in $\h$.
\end{enumPL}
In the RHS of \ref{PL Property 2} 
\begin{equation}
{\rm a}_\triangleright(x,y,z)=x\triangleright(y\triangleright z)-(x\triangleright y)\triangleright z,\,\forall x,y,z\in\mathfrak h \label{eq:ass}
\end{equation}
denotes the \emph{associator} defined by the bilinear product $\triangleright$. 
A post-Lie algebra whose Lie bracket is trivial is a pre-Lie algebra.  
On the other hand, a post-Lie algebra $(\h,\triangleright)$ gives rise to a Lie algebra $\overline{\h}$ with same underlying vector space as $\h$ and whose Lie bracket $\llbracket-,-\rrbracket:\h\otimes \h\rightarrow \h$ is defined by
\begin{equation}
\llbracket x,y\rrbracket=x\triangleright y-y\triangleright x+[x,y],\,\forall x,y\in\h. \label{eq:classpostLie}
\end{equation}
Moreover, there is an element $\upsilon \in\Hom_{\text{Lie}}(\overline{\h},\Der(\h))$, defined by 
\begin{equation}
\upsilon_x(y)=x\triangleright y,\,\forall x,y\in\overline{\h}. 
\end{equation} 

The enveloping algebra $\mathcal U(\h)$ of a post-Lie algebra was analyzed in depth in \cite{EFLMMK}, whose authors, extending the results of \cite{GO1,GO2}, showed that a suitable extension of $\triangleright$ together with the coalgebra structure of $\mathcal U(\h)$ allow to define a \emph{new} associative product $\ast\co \mathcal U(\h)\otimes\mathcal U(\h)\rightarrow\mathcal U(\h)$, called the \emph{Grossman-Larson} product, compatible with the initial coalgebra structure and antipode. 
In this way it was proven that on $\mathcal U(\h)$ it was possible to define a new Hopf algebra $\mathcal U_\ast(\h)$, which turned out to be isomorphic to $\mathcal U(\overline{\h})$.
After suitable completion of the Hopf algebras involved, the above mentioned isomorphism defines an isomorphism between the (completed) Lie algebras $\overline{\h}$ and $\h$, whose inverse $\chi \co \h\rightarrow\overline{\h}$, called the \emph{post-Lie Magnus expansion}, abbreviated as pLMe hereafter, is one of the main concerns of the present note. 

The pLMe has two predecessors, the pre-Lie and the classical Magnus expansions, see \cite{BCOR}.  
The \emph{pre}-Lie Magnus expansion $\chi$ appeared at the beginning of the eighties in the work of Agrachev and Gramkelidze, see \cite{AG}. 
However it has been dubbed as such only in \cite{Kur-Man}, where the classical Magnus expansion was extensively explored in the context of the pre-Lie and dendriform algebras. 
Finally in \cite{CP}  was presented a formula expressing $\chi$ in terms of the so called Grossman-Larson product which read as
\[
\chi(x)=\log_\ast(\exp(x)),
\]
see also \cite{BS}. 

On the other hand, the pLMe was introduced in \cite{EFLMMK} in connection with a particular class of iso-spectral flow equations. There it is was shown that for every $x\in\h$,  $\chi_x(t):=\chi(tx)\in\h[[t]]$ satisfies the following non-linear ODE
\[
\dot{\chi}_x(t)=(d\exp_{\ast})^{-1}_{-\chi_x(t)}\big(\exp_{\ast}(-\chi_x (t))\triangleright x\big),
\]
and that, the \emph{non-linear post-Lie differential} equation 
\[
{\dot x}(t)=-x(t)\triangleright x(t),
\]
for $x=x(t)\in\h[[t]]$, with initial condition $x(0)=x_0\in\h$, has as a solution
\[
x(t)=\exp_\ast (-\chi_{x_0}(t))\triangleright x_0.
\]
In \cite{CEFO} it was underlined the relevance of the pLMe in the theory of the Lie group integrators and in \cite{MQS} it was proven that on a post-Lie algebra, in analogy to what happens on every pre-Lie algebra, the pLMe provides an isomorphism between the group of \emph{formal flows} and the BCH-group defined on $\overline{\h}$, generalizing the analog well known result proven in \cite{AG}, see also \cite{DSV,bandiera}. 
\\

The aim of this letter is twofold. In the first place, starting from the integration result presented in \cite{MQS}, we give a more Lie-theoretic interpretation of the pLMe, in terms of the so called \emph{crossed morphisms} of Lie groups and Lie algebras, see for example \cite{Lue, B-N, Higert-Neeb, PSTZ}. 

More precisely, first we show that a post-Lie algebra structure on a Lie algebra $\h$ is equivalent to the datum $(\id,\upsilon)$ where $\id \co \overline{\h}\rightarrow\h$, the identity map, is a crossed morphism relative to $\upsilon\in\Hom_{\text{Lie}}(\overline{\h},\Der(\h))$. Then we argue that the pLMe is the (inverse of a) crossed morphism between the corresponding (local) Lie groups $\overline{\mathcal H}$ and $\mathcal H$, obtained \emph{integrating} $\id$. 

We would like to stress that while the local existence of the pLMe, at the level of the Lie groups $H$ and $\overline H$, is guaranteed by general Lie theory, its global existence is obstructed, see \cite{NeebLoc}, and its explicit expression seems, from this view-point, really difficult to obtain. 

On the other hand, working formally at the level of the completed enveloping algebras of $\overline{\h}$ and $\h$, one first can prove the existence of the pLMe and then, using a \emph{formal} integration process, can show that the inverse of the pLMe is a crossed morphism between the corresponding local Lie groups.
 
This line of thoughts opens the door to a categorical interpretation of various approaches to post-Lie and pre-Lie algebras which one finds in the literature, see for example \cite{bai-guo-ni,bdv,MKL} and references there in.
On the other hand, it makes clear the universal nature of the pLMe, asking for a (more systematic) method to compute the coefficients of this expansion.
This goal is achieved in the second part of the paper, where such a method, based in the so called \emph{tubings}, see \cite{CarrDevadoss}, is presented.
\\\\
\emph{Relations with other works.} Post-Lie algebras appeared recently as central objects in the study of the so called \emph{$\mathcal{O}$-operators}, first introduced in \cite{Kup}, which are particular extensions of the classical $r$-matrices, playing an important role in the theory of the generalized Lax pair representations, introduced in \cite{Bordemann}. The notion of $\mathcal O$-operator was further extended in \cite{bai-guo-ni}, where the concepts of $\mathcal O$-operator of \emph{weight $\lambda$} and, respectively, of \emph{extended} $\mathcal O$-operator, were introduced. It is in this framework that the relation between (generalized) Lax representations and post-Lie algebras crystallized. In particular in \cite{bai-guo-ni} it was shown that the post-Lie algebra structure on a Lie algebra $\g$ are in one-to-one correspondence with the pairs $((\upsilon,\h),\mathcal O)$ where $\h$ is a Lie algebra, $\upsilon\in\Hom_{\text{Lie}}(\g,\Der(\h))$ and $\mathcal O:\h\rightarrow\g$ is an \emph{invertible} $\mathcal O$-operator of weight $1$, see Corollary 5.5 in \cite{bai-guo-ni}. This result should be compared with Proposition \ref{pro:equiv} of the present work, see also \ref{Re1} in Remark \ref{rem:relationwithothers}.

Another instance where the notion of a post-Lie algebra rises naturally is the theory of the so called simply-transitive \emph{NIL-affine actions} of nilpotent Lie groups, see \cite{bdv}. 
In this reference it was shown that given $(G,N)$, a pair of connected and simply-connected nilpotent Lie groups, there exists a simply transitive NIL-action of $G$ on $N$ if and only if there exists a Lie algebra $\g'\sim \g$ such that the pair $(\g',\mathfrak{n})$ carries a structure of a post-Lie algebra, see Theorem 2.5 in \cite{bdv}. 
The proof of this result is based on the observation that a pair of Lie algebras $(\g,\mathfrak n)$ carries a structure of a post-Lie algebra if and only if there is a faithful morphism of Lie algebras $\varrho:\g\rightarrow\mathfrak n\ltimes\Der(\mathfrak n)$ of the form $\varrho(x)=(x,L(x))$ for all $x\in\g$, see Proposition 2.11 in \cite{bdv} for the precise statement. This result should be compared with \ref{Re2} in Remark \ref{rem:relationwithothers} of the present work.
\\

\emph{Plan of the present work}.  
In Section \ref{sec: CM} is recalled the notion of crossed morphism for Lie groups and Lie algebras and the relation between \emph{invertible} crossed morphisms of Lie algebras and post-Lie algebras is explained. 
This section closes with a brief discussion on \emph{local} Lie groups. \\ 
In Section \ref{sec: Univ. env. alg. and pLMe} it is shown that the datum of a crossed morphism between two Lie algebras yields a morphism between the associated universal enveloping algebras, which, when the crossed morphism is invertible, provides an isomorphism giving rise to the \emph{Grossman-Larson} product. 
After introducing a suitable \emph{integration functor}, last part of this section is devoted to the analysis of the pLMe from the categorical view-point sketched above. 
\\
In Section \ref{sec: Computing pLMe} two combinatorial interpretations of the coefficients of the pLMe are given. 
Both interpretations are based on a notion of nested tubings. 
The first method is based on the \emph{vertical} nested tubings and allows to compute the coefficients associated to any forest recursively. 
The second method is based on the \emph{horizontal} nested tubings and allows to express these coefficients in a closed form. 
This section is divided into six parts. 
The first four parts \ref{sec: Operad of PRT}--\ref{sec: univ env alg} are essentially a reminder; they serve to set up conventions and to introduce the adequate combinatorics in order to handle the pLMe. 
More in details, the first part set up conventions and notations on planar trees and forests and introduces specific graftings of them. 
The second part is a brief reminder on the combinatorial operad  $\PSB$, a model of the operad $\ca{P}ost\ca{L}ie$, which serves as a combinatorial base to handle operations on the free post-Lie algebra (on one generator) and on its universal enveloping algebra. 
These last two algebras are the subject of the third and four parts.  
The fifth part is dedicated to the notions of vertical and horizontal nested tubings which are the last essential ingredient to compute the pLMe. 
Finally, the last part is devoted to the computation of this expansion, first in terms of vertical, then in terms of horizontal nested tubings.

 \subsection{Conventions}\label{sec: convention operads} 

Throughout the paper $\mathbb K$ will denote a field of characteristic zero.  
 The tensor product will be taken over $\mathbb K$. In particular the tensor product of two $\mathbb K$--vector spaces $V$ and $W$ will be denoted by $V\ot W$. 
 All Lie groups considered will be connected and simply-connected. 
The category of the post-Lie algebras and their morphisms will be denoted by $\cat{PostLie}$ while the category of the pre-Lie algebras and their morphisms will be denoted by $\cat{PreLie}$.

\section{Crossed morphisms}\label{sec: CM}

In this section we will recall the concepts of post-Lie algebra and of crossed morphism, both for Lie algebras and for Lie groups, and we will comment on how these relate to each other.

\subsection{Crossed morphisms of Lie algebras}

\begin{defn}
	Let $\g$ and $\h$ be two Lie algebras, and let $\upsilon:\g\rightarrow\Der_{\text{Lie}}(\h)$ be a morphism of Lie algebras. 
	A \emph{crossed morphism relative to $\upsilon$} is a map $\phi\in\Hom_{\mathbb K}(\mathfrak g,\mathfrak h)$ that satisfies 
	\begin{equation*}
	\phi([x,y]_{\mathfrak g})=\upsilon_x(\phi(y))-\upsilon_y(\phi(x))+[\phi(x),\phi(y)]_{\mathfrak h},\,\forall x,y\in\mathfrak g.\label{eq:crosshom}
	\end{equation*} 
	The set of crossed morphisms of $\mathfrak g$ in $\mathfrak h$ relative to $\upsilon$ is denoted with $\operatorname{Cross}^\upsilon(\mathfrak g,\mathfrak h)$. 
	The subset of the \emph{invertible} crossed morphisms is denoted with $\Cross_{\text{inv}}^{\upsilon}(\g,\h)$. 
\end{defn}

\begin{example}
	If $\mathfrak h$ is abelian, \ie if $[-,-]_{\mathfrak h}\equiv 0$, then $\operatorname{Der}_{\text{Lie}}(\mathfrak g)=\operatorname{End}_{\mathbb K}(\mathfrak g)$. In this case $\phi$ is a crossed morphism of $\mathfrak g$ in $\mathfrak h$ relative to $\upsilon\in\operatorname{End}_{\mathbb K}(\mathfrak g)$ if and only if 
	\[
	\phi([x,y]_{\mathfrak g})=\upsilon_x(\phi(y))-\upsilon_y(\phi(x)),\,\forall x,y\in\mathfrak g.
	\]
\end{example}
\begin{example}
	If $f\in\operatorname{Hom}_{\text{Lie}}(\mathfrak g,\mathfrak h)$ then $\upsilon_f:\mathfrak g\rightarrow\operatorname{Der}_{\text{Lie}}(\mathfrak h)$ defined by
	\[
	\upsilon_{f}(x)(a)=[f(x),a]_{\mathfrak h},\,\forall a\in\mathfrak h,
	\]
	is a morphism of Lie algebras and $\phi\in\operatorname{Hom}_{\mathbb K}(\mathfrak g,\mathfrak h)$ belongs to $\operatorname{Cross}^{\upsilon_f}(\mathfrak g,\mathfrak h)$ if and only if $f+\phi\in\operatorname{Hom}_{\text{Lie}}(\mathfrak g,\mathfrak h)$.
\end{example}

\begin{example}
	If $\g=\h$ and $\phi$ is the identity, then $\h$ has another Lie algebra structure, given by 
	\begin{equation}\label{eq: h bar }
	\llbracket x,y \rrbracket := \upsilon_{x}(y) - \upsilon_y (x) + [x,y]_{\h} \text{ for all } x,y \in \h. 
	\end{equation}
	The resulting Lie algebra is denoted by $\overline{\h} = (\h,\llbracket-,-\rrbracket)$. 
\end{example}

\begin{defn}
The category $\CM$ is as follows. 
The objects are the tuples $(\g,\h,\upsilon, \phi)$ of two Lie algebras $\g$ and $\h$ and 
$(\upsilon,\phi)\in \Hom_{\text{Lie}}(\g,\Der(\h)) \times \Cross^{\upsilon}(\g,\h)$. 
The morphisms between $(\g,\h,\upsilon, \phi)$ and $(\g',\h',\upsilon', \phi')$ are pairs $(f,g)\in\Hom_{\text{Lie}}(\g,\g')\times \Hom_{\text{Lie}}(\h,\h')$ such that
\begin{enumM}
\item\label{M1}$g\circ\phi=\phi'\circ f$ and
\item\label{M2}$g(\upsilon_{x}(a))=\upsilon'_{f(x)}(g(a)) \text{ for all } x\in\mathfrak g \text{ and } a\in\mathfrak h$. 
\end{enumM}

The subcategory $\CMinv\subset \CM$  is the one of those tuples $(\g,\h,\upsilon,\phi)$ such that $\phi$ is an invertible crossed morphism, \ie $\phi\in \Cross^{\upsilon}_{\text{inv}}(\g,\h)$.   
The subcategory $\iota\co \CMid\subset \CMinv$ is the one of the tuples of the form  $(\overline{\h},\h,\upsilon, id)$. 
\end{defn}

Let $R\co \CMinv \to \CMid$ be the functor given by 
\begin{equation}
R(\g,\h,\upsilon, \phi)=(\overline{\h},\h,\upsilon\circ \phi^{-1}, id)\quad\text{and}\quad R(f,g)=(g,g).\label{eq:functR}
\end{equation} 
Note that in the first tuple, the Lie algebra $\overline{\h}$ is determined by $(\h,\upsilon\circ \phi^{-1})$ so that is Lie bracket is given  $\llbracket x,y\rrbracket=\upsilon_{\phi^{-1}(x)}(y)-\upsilon_{\phi^{-1}(y)}(x)+[x,y]_{\h}$ for all $x,y\in\h$, according to \eqref{eq: h bar }.

\begin{prop}\label{pro:equivcat}
	The two categories $\CMid$ and $\CMinv$ are adjoint equivalent. 
\end{prop}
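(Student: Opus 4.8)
The plan is to exhibit $\iota$ and $R$ as quasi-inverse functors and then to observe that the unit and counit can be chosen so that the triangle identities hold on the nose; since one of the two composites is \emph{literally} the identity functor, the adjunction data are almost forced.

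First I would note that $R\circ\iota=\id_{\CMid}$. On an object $(\overline{\h},\h,\upsilon,\id)$ of $\CMid$ the functor $R$ returns $(\overline{\h},\h,\upsilon\circ\id^{-1},\id)$, and by the convention recalled after \eqref{eq:functR} its first entry is the Lie algebra determined by the pair $(\h,\upsilon\circ\id^{-1})=(\h,\upsilon)$ through \eqref{eq: h bar }, that is, $\overline{\h}$ itself; on morphisms $R(g,g)=(g,g)$. Hence we may take the counit $\epsilon\co R\circ\iota\Rightarrow\id_{\CMid}$ to be the identity natural transformation.

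Next I would construct the unit. For $X=(\g,\h,\upsilon,\phi)$ in $\CMinv$ I claim that $\eta_X:=(\phi,\id_\h)$ is an isomorphism $X\to\iota R(X)=(\overline{\h},\h,\upsilon\circ\phi^{-1},\id)$, where on the target the Lie algebra $\overline{\h}$ carries the bracket $\llbracket u,v\rrbracket=\upsilon_{\phi^{-1}(u)}(v)-\upsilon_{\phi^{-1}(v)}(u)+[u,v]_\h$. Three things have to be checked: (a) $\phi$ is a morphism of Lie algebras $\g\to\overline{\h}$; (b) the pair $(\phi,\id_\h)$ satisfies \ref{M1} and \ref{M2}; (c) $\eta_X$ is invertible, with inverse $(\phi^{-1},\id_\h)$. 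For (a), substituting $u=\phi(x)$ and $v=\phi(y)$ into the above bracket turns the defining equation of a Lie morphism, $\phi([x,y]_\g)=\llbracket\phi(x),\phi(y)\rrbracket$, into precisely the crossed-morphism identity for $\phi$ relative to $\upsilon$. For (b), \ref{M1} reads $\id_\h\circ\phi=\id\circ\phi$ and \ref{M2} reads $\upsilon_x(a)=\upsilon_{\phi^{-1}(\phi(x))}(a)$, both tautologies. For (c), the set-theoretic inverse of the Lie-algebra isomorphism $\phi\co\g\to\overline{\h}$ is again a Lie morphism, and \ref{M1}, \ref{M2} for $(\phi^{-1},\id_\h)$ are once more trivial. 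Finally, naturality of $\eta$ along a morphism $(a,b)\co X\to X'$ of $\CMinv$ is exactly the relation $b\circ\phi=\phi'\circ a$, which is condition \ref{M1} for $(a,b)$; hence $\eta\co\id_{\CMinv}\Rightarrow\iota\circ R$ is a natural isomorphism.

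It then remains to check the two triangle identities for $R\dashv\iota$. Because $\epsilon$ is the identity, $\epsilon_{R(X)}\circ R(\eta_X)=\id_{R(X)}$ reduces to $R(\eta_X)=\id_{R(X)}$, which holds since $R(\phi,\id_\h)=(\id,\id)$ is the identity of $R(X)$; likewise $\iota(\epsilon_Y)\circ\eta_{\iota(Y)}=\id_{\iota(Y)}$ reduces to $\eta_{\iota(Y)}=\id_{\iota(Y)}$, which holds since for $Y=(\overline{\h},\h,\upsilon,\id)$ the crossed morphism is $\id$, so $\eta_{\iota(Y)}=(\id,\id)$. This presents $(R,\iota,\eta,\epsilon)$ as an adjoint equivalence. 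The only step that is not a pure unwinding of definitions is (a), namely recognizing that the crossed-morphism axiom for $\phi$ relative to $\upsilon$ is literally the statement that $\phi$ intertwines $[-,-]_\g$ with the bracket \eqref{eq: h bar } built from $\upsilon\circ\phi^{-1}$, while keeping the two occurrences of $\upsilon$ (before and after the twist by $\phi^{-1}$) correctly aligned; the well-definedness of $R$ itself could be re-derived in the same way but is already granted by the statement that $R$ is a functor.
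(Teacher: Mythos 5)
Your proof is correct and follows essentially the same route as the paper: it exhibits $R\circ\iota=\id_{\CMid}$ on the nose, takes $\eta_X=(\phi,\id_\h)$ as the natural isomorphism $X\to\iota R(X)$ (whose well-definedness is exactly the observation that the crossed-morphism identity says $\phi\co\g\to\overline{\h}$ is a Lie morphism), and verifies the triangle identities — the ``straightforward verification'' the paper leaves implicit. The only cosmetic difference is that you orient the adjunction as $R\dashv\iota$ while the paper states $\iota\dashv R$; since your unit and counit are natural isomorphisms this is immaterial, as either orientation yields the claimed adjoint equivalence.
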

\begin{proof}
	The inclusion functor is full and essentially surjective which proves the equivalence. 
	It remains to show that the  functor  $R$ is a right adjoint to $\iota$. 
	To do this it is enough to check that the unit $\eta\co id \to R\circ\iota$ and counit $\epsilon\co \iota \circ R \to id$  transformations satisfy the triangle relations: $\iota \xrightarrow{\iota \eta} \iota R \iota \xrightarrow{\epsilon \iota} \iota$ and $R  \xrightarrow{\eta R} R\iota R  \xrightarrow{R \epsilon} R$ are identities. This is a straightforward verification.
\end{proof}

To a tuple $(\overline{\h},\h,\upsilon,id) \in \CMid$ one may associate the post-Lie algebra $(\h,\plprod)$ where 
\begin{equation}\label{eq: postlie from hh,v ,id}
x\plprod y := \upsilon_{x}(y) \text{ for all } x,y\in \h. 
\end{equation}
Indeed, \ref{PL Property 1} is clear since $\upsilon_x$ is a derivation  of $\h$, and  
\ref{PL Property 2} results from the fact that $\upsilon\co \overline{\h}\to \Der(\h)$ is a Lie morphism: for all $x,y$ and $z$ in $\h$, one has 
\begin{equation*}
[x,y]\plprod z
=\upsilon_{[x,y]}(z)
=\upsilon_{\llbracket x,y \rrbracket - \upsilon_{x}(y) + \upsilon_{y}(x)}(z)\\
=\upsilon_{x}(\upsilon_{y}(z))-\upsilon_{y}(\upsilon_{x}(z))-\upsilon_{\upsilon_{x}(y)}(z)+\upsilon_{\upsilon_{y}(x)}(z).
\end{equation*}
The following is straightforward. 
\begin{prop}\label{pro:equiv}
	The two categories  $\CMid$ and $\cat{PostLie}$ are isomorphic. 
\end{prop}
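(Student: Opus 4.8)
The plan is to construct explicit functors in both directions and check they are mutually inverse. In one direction I would send a tuple $(\overline{\h},\h,\upsilon,id)\in\CMid$ to the post-Lie algebra $(\h,\plprod)$ with $x\plprod y:=\upsilon_x(y)$, exactly as in \eqref{eq: postlie from hh,v ,id}; the discussion immediately preceding the statement already verifies that \ref{PL Property 1} and \ref{PL Property 2} hold, so this assignment is well-defined on objects. In the other direction, given a post-Lie algebra $(\h,\triangleright)$ I would produce the tuple $(\overline{\h},\h,\upsilon,id)$ where $\overline{\h}$ is the Lie algebra with bracket $\llbracket x,y\rrbracket=x\triangleright y-y\triangleright x+[x,y]$ of \eqref{eq:classpostLie} and $\upsilon_x(y):=x\triangleright y$; this is the element of $\Hom_{\text{Lie}}(\overline{\h},\Der(\h))$ recalled in the introduction. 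Here \ref{PL Property 1} says precisely that each $\upsilon_x$ is a derivation of $\h$, and \ref{PL Property 2}, rewritten via \eqref{eq:classpostLie} as in the displayed computation above, is equivalent to $\upsilon$ being a Lie morphism $\overline{\h}\to\Der(\h)$; and $id\co\overline{\h}\to\h$ is a crossed morphism relative to $\upsilon$ by the Example in the excerpt (the case $\g=\h$, $\phi=\id$) since the crossed-morphism identity for $\id$ is exactly \eqref{eq: h bar }. So the two object-assignments are visibly inverse to each other.

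Next I would treat morphisms. A morphism in $\CMid$ from $(\overline{\h},\h,\upsilon,id)$ to $(\overline{\h'},\h',\upsilon',id)$ is a pair $(f,g)$ with $g\circ id=id\circ f$, forcing $f=g$, together with condition \ref{M2}: $g(\upsilon_x(a))=\upsilon'_{g(x)}(g(a))$, i.e. $g(x\triangleright a)=g(x)\triangleright' g(a)$. That is exactly the condition for $g\co\h\to\h'$ to be a morphism of post-Lie algebras. Conversely a post-Lie morphism $g$ automatically commutes with the induced brackets $\llbracket-,-\rrbracket$, so $g\co\overline{\h}\to\overline{\h'}$ is a Lie morphism and $(g,g)$ is a morphism in $\CMid$. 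One checks that the passage back and forth is the identity on morphisms, and that composition and identities are preserved — this is immediate since on the morphism side both functors are essentially ``take the underlying linear map $g$''.

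Finally I would assemble these two assignments into functors $F\co\CMid\to\cat{PostLie}$ and $G\co\cat{PostLie}\to\CMid$ and observe that $F\circ G=\id_{\cat{PostLie}}$ and $G\circ F=\id_{\CMid}$ on the nose — not merely up to natural isomorphism — because in each composite the underlying vector space, the linear maps, and all the structure maps are literally the same (the Lie bracket on $\overline{\h}$ is recovered from $(\h,\upsilon)$ by \eqref{eq: h bar }, and $\upsilon$ is recovered from $\triangleright$ by $\upsilon_x(y)=x\triangleright y$). Hence the two categories are isomorphic. I do not expect any genuine obstacle here: every step is a direct translation, and the only points requiring a line of checking are that \ref{PL Property 2} matches ``$\upsilon$ is a Lie morphism'' (handled by the displayed computation in the excerpt) and that a post-Lie morphism is the same as a map intertwining both the $\upsilon$'s and hence the bracketed Lie structures — which is the mildest part of the verification. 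This is why the statement is flagged as ``straightforward''.
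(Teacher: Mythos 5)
Your proposal is correct and follows exactly the route the paper intends: the object-level assignment $x\plprod y=\upsilon_x(y)$ is the one constructed in the paragraph preceding the statement, and your verification that morphisms in $\CMid$ (where $g\circ\id=\id\circ f$ forces $f=g$, and \ref{M2} becomes $g(x\triangleright a)=g(x)\triangleright' g(a)$) correspond precisely to post-Lie morphisms, together with the strict inverse functor built from \eqref{eq:classpostLie} and \eqref{eq: h bar }, is the content the paper dismisses as ``straightforward.'' No discrepancy with the paper's argument.
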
  

\begin{rem}\label{ex:preLie}
The tuples $(\mathfrak g,\mathfrak h,\upsilon,\phi)$ as in $\CMinv$ where $\h$ is an abelian Lie algebra form a full subcategory of $\CMinv$, denoted, hereafter, by $\cat{CM_{pl}}$. 
The full subcategory of $\cat{CM_{pl}}$ whose objects are the tuples $(\overline{\h},\h,\upsilon,\id)$, denoted from now on by $\cat{CM_{pl,id}}$, which is adjoint equivalent to $\cat{CM_{pl}}$, is isomorphic to $\cat{PreLie}$, recovering the result of \cite{bai}, see also \cite{baus} and references therein.
To be more explicit, it is worth to note that if $(\mathfrak g,\mathfrak h,\upsilon,\phi)$ is an object in $\cat{CM_{pl}}$, then
\[
\phi([x,y]_\g)=\upsilon_x(\phi(y))-\upsilon_y(\phi(x)),\,\forall x,y\in\g,
\]
i.e. $\phi$ is a \emph{bijective} $1$-cocycle (of the Chevalley-Eilenberg cohomology) of $\g$ with values in $\h$. 
\end{rem}

\begin{example}[\cite{B-N},\cite{MKL}]
For a given Lie group $K$ whose Lie algebra is $\mathfrak k$, let $\g=\mathfrak X(K)$ with its standard Lie bracket and $\h=C^\infty(K,\mathfrak k)$ with the Lie bracket defined by $\llceil f,g\rrceil(k)=[f(k),g(k)]_\mathfrak k$, for all $f,g\in\h$ and $k\in K$. Then
\begin{equation}
\upsilon_X(f)(k):=(X_kf),\,\forall X\in\g,\;f\in\h,\label{eq:up}
\end{equation}
is a morphism of Lie algebras from $\g$ to $\Der(\h)$. 
Furthermore, recall that $\theta\in\Omega^1(K,\mathfrak k)$, defined via the left-translations $L_k$ by $\theta_k(v)=(L_{k^{-1}})_{\ast,k}(v)$ for all $k\in K$ and $v\in T_kK$, defines a parallelization of $TK{\simeq}K\times\mathfrak k$ by the $v\stackrel{\theta}{\rightsquigarrow}(k,\theta_k(v))$, for all $k\in K$ and $v\in T_kK$. Composing this map with the projection $K\times\mathfrak k\rightarrow\mathfrak k$, one obtains $\phi\in C^\infty(K,\mathfrak k)$
\begin{equation}
\phi(X)=i_X\theta,\,\forall X\in\g.\label{eq:phi}
\end{equation}
Computing $i_{[X,Y]}\theta=\mathcal L_X(i_Y\theta)-i_Y(\mathcal L_X\theta)$, where $\mathcal L_X$ denotes the operation of Lie derivative in the direction $X$, and recalling that $\theta$ satisfies the Maurer-Cartan equation, i.e. $d\theta+\frac{1}{2}[\theta,\theta]=0$, one obtains
\[
i_{[X,Y]}\theta=i_X(di_Y\theta)-i_Y(di_X\theta)+\llceil i_X\theta,i_Y\theta\rrceil,\,\forall X,Y\in\g,
\]
i.e. $\phi\in\Cross^\upsilon_{\text{inv}}(\g,\h)$. The application $\phi$ is an invertible map, $C^\infty(K)$-linear, such that $\phi(X_x)=x$ for all $x\in\mathfrak k$, where $X_x$ is the left-invariant vector field defined by the element $x\in\mathfrak k$. Applying the functor $R$ defined in \eqref{eq:functR}, one concludes that
\begin{equation}
f\triangleright g:=\phi^{-1}(f)g\,\forall f,g\in\h,\label{eq:Kpl}
\end{equation}
makes $(\h, \llceil-,-\rrceil, \triangleright)$ into a post-Lie algebra. 

Moreover, the Lie algebra $\overline\h$ whose underlying vector space is $C^\infty(K,\mathfrak k)$ and whose Lie bracket is
\[
\llfloor f,g\rrfloor=\phi^{-1}(f)g-\phi^{-1}(g)f+\llceil f,g\rrceil,\,\forall f,g\in C^\infty(K,\mathfrak k).
\]
Pulling back \eqref{eq:Kpl} to $\g$, one obtains $\blacktriangleright:\g\otimes\g\rightarrow\g$, defined by
\begin{equation}
X\blacktriangleright Y=\phi^{-1}(\phi(X)\triangleright\phi(Y)),\,\forall X,Y\in\g,
\end{equation}
which is a $C^\infty(K)$-linear product on $\mathfrak X(K)$ with respect to the first entry, such that
\[
X\blacktriangleright(\xi Y)=X(\xi)Y+\xi X\blacktriangleright X,\,\forall\xi\in C^\infty(K),\,X,Y\in\mathfrak X(K),
\]
which, together with \eqref{eq:Kpl}, implies that $X\blacktriangleright Y=0$ for all $X\in\mathfrak X(K)$ and all $Y$ left-invariant.
In other words, $\blacktriangleright$ defines a flat linear connection on $TK$, whose flat sections are the left-invariant vector fields, and whose torsion is easily shown to be parallel since $T(X_x,X_y)=-X_{[x,y]_\mathfrak t}$, for all $x,y\in\mathfrak k$.
\end{example}

\begin{rem}\label{rem:relationwithothers}
A couple of remarks are now in order.
\begin{enumerate}
\item\label{Re1} Keeping the same notations introduced above, $r\in\Hom_{\K}(\h,\g)$ is called an $\mathcal O$-operator of weight $\lambda\in\mathbb R$, if
\begin{equation}
[r(x),r(y)]_{\g}=r(\upsilon_{r(x)}y-\upsilon_{r(y)}x+\lambda[x,y]_\h).\,\forall x,y\in\h.\label{eq:Ope}
\end{equation}
The tuple $(\g,\h,\upsilon,r)$ where $r$ satisfies \eqref{eq:Ope} form a category $\CM_{\mathcal O,\lambda}$ whose morphisms between $(\g,\h,\upsilon,r)$ and $(\g',\h',\upsilon',r')$ are pairs $(f,g)\in\Hom_{\text{Lie}}(\g,\g')\times\Hom_{\text{Lie}}(\h,\h')$, satisfying \ref{M2} and the analogue of \ref{M1}, i.e. $f\circ r=r'\circ g$.
The full subcategory of $\CM_{\mathcal O,\lambda=1}$ whose objects are the tuples $(\g,\h,\upsilon,r)$ whose $r$ is invertible is isomorphic to $\CMinv$, because of Proposition \ref{pro:equiv}, it is adjoint equivalent to $\cat{PostLie}$. In this way we recover the description of post-Lie algebras given in \cite{bai-guo-ni}. 
\item\label{Re2} Let $\CM_b$ be the category whose objects are the tuples $(\g,\h,\varrho)$, where $\varrho\in\Hom_{\text{Lie}}(\g,\h\rtimes\Der(\h))$ and the Lie bracket in 
$\h\rtimes\Der(\h))$ is defined by the formula 
\[
\{(h_1,d_1),(h_2,d_2)\}=([h_1,h_2]_{\h}+d_1(h_2)-d_2(h_1),[d_1,d_2]).
\]
Note that composing $\varrho:\g\rightarrow\h\rtimes\Der(\h)$ with the canonical projections $\pi_2:\h\rtimes\Der(\h)\rightarrow\Der(\h)$ and $\pi_1:\h\rtimes\Der(\h)\rightarrow\h$, one gets $\upsilon_\varrho\in\Hom_{\text{Lie}}(\g,\Der(\h))$ and, respectively, $\phi_\varrho\in\Cross^{\upsilon_\varrho}(\g,\h)$. 
A morphism between two objects $(\g,\h,\varrho)$ and $(\g',\h',\varrho')$ in $\CM_b$ is a pair $(f,g)\in\Hom_{\text{Lie}}(\g,\g')\times\Hom_{\text{Lie}}(\h,\h')$ satisfying \ref{M1} and \ref{M2} with respect to the pairs $(\upsilon_\varrho,\phi_\varrho)$'s. The full subcategory $\cat{CM_{b,inv}}\subset\CM_b$ whose objects are $(\g,\h,\varrho)$, 
where $\phi_\varrho$ is a bijective linear map, is easily shown to be isomorphic to $\CMinv$. 
Analogously the full subcategory $\cat{CM_{b,id}}\subset\cat{CM_{b,inv}}$ whose objects are $(\g,\h,\varrho)$ where $\g$ and $\h$ are defined on the same underlying vector space and $\phi_\varrho=\operatorname{id}$ turns out to be isomorphic to $\CMid$. In this way one recovers the description of $\cat{PostLie}$ given in \cite{bdv}. 
\end{enumerate}
\end{rem}

\subsection{Crossed morphism of Lie group type objects}

In analogy to the Lie algebra case one can define the notion of crossed morphism between two Lie groups. 

First, recall that for $H$ a Lie group, $\operatorname{Aut}(H)$ denotes the group of automorphisms of $H$ which are diffeomorphisms of $H$, i.e. $\phi\in\operatorname{Aut}(H)$ if and only if
\begin{enumerate}
	\item[(i)] $\phi$ is an isomorphism of abstract groups,
	\item[(ii)] $\phi$ is a diffeomorphism.
\end{enumerate}

\begin{defn}
	Let $G$ and $H$ be two Lie groups and let $\Upsilon\co G\rightarrow\Aut(H)$ be a morphism of Lie groups. 
	A \emph{crossed morphism relative to $\Upsilon$} is a smooth map 
	$\Phi\co G\rightarrow H$ that satisfies 
	\begin{equation}
	\Phi(gh)=\Phi(g)\Upsilon_g(\Phi(h)),\,\forall g,h\in G.\label{eq:crossG}
	\end{equation}
\end{defn}

By changing, in the definition of $\CM$, the underlying category of Lie algebras by the one of Lie groups, one obtains the following category. 
\begin{defn}
	The category $\CMGp$ is as follows. 
	The objects are the tuples $(G,H,\Upsilon, \Phi)$ of two Lie groups $G$ and $H$ and 
	$(\Upsilon,\Phi)\in \Hom_{\text{LieGp}}(G,\Aut(H)) \times \Cross^{\Upsilon}(G,H)$. 
	The morphisms between $(G,H,\Upsilon, \Phi)$ and $(G',H',\Upsilon', \Phi')$ are pairs 
	$(f,g)\in\Hom_{\text{LieGp}}(G,G')\times \Hom_{\text{LieGp}}(H,H')$ such that
	\begin{equation*}
	g\circ\Phi=\Phi'\circ f \quad  \text{ and } \quad
	g(\Upsilon_{x}(a))=\Upsilon'_{f(x)}(g(a)) \text{ for all } x\in G \text{ and } a\in H.
	\end{equation*}
\end{defn} 

In the same vein as before, one has subcategories $\CMGpid \subset \CMGpinv \subset \CMGp$, and an adjoint equivalence between $\CMGpid$ and $\CMGpinv$. 
Note that the projection functor 
\begin{equation*}
P\co \CMGpinv\to \CMGpid
\end{equation*} 
sends any tuple $(G,H,\Upsilon,\Phi)$ to $(\overline{H}, H, \Upsilon\circ \Phi^{-1},id)$, where 
the product of $\overline{H}=(H,\star)$ is given by 
\begin{equation}\label{eq: prod H bar}
h_1\star h_2 = h_1\Upsilon_{\Phi^{-1}(h_1)}(h_2) \text{ for all } h_1,h_2\in H. 
\end{equation}

The classical Lie functor gives rise to a functor 
\begin{equation*}
T_e\co \cat{CMGp} \to \CM
\end{equation*}
that sends $(G,H,\Upsilon, \Phi)$ to $(\g,\h,\Upsilon_{\ast,e_G}, \Phi_{\ast,e_G})$. 
It restricts to the sub categories of invertible crossed morphisms  
\begin{equation*}
T_e\co \cat{CMGp_{inv}} \to \CMinv  
\end{equation*} 
and also to $T_e\co \CMGpid \to \CMid$. 
The latter means that $T_e$ sends $(\overline{H},H,\Upsilon, id)$ to $(\overline{\h},\h,\Upsilon_{\ast,e_G}, id)$, which makes notations consistent;  
to see this it is enough to verify that \eqref{eq: prod H bar}, with $\Phi=id$, gives rise to the Lie bracket of \eqref{eq: h bar } by differentiation.   
Moreover, $T_e$ commutes with the projections: 
\begin{prop}
	$R \circ T_e = T_e \circ P$.  
\end{prop}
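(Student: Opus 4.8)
The plan is to unwind the two composite functors on morphisms and on objects and to check that they agree. On morphisms there is nothing to do: $T_e$ sends $(f,g)$ to $(f_{\ast,e_G},g_{\ast,e_H})$, the functor $R$ sends a morphism $(\alpha,\beta)$ to $(\beta,\beta)$, and its group‑level analogue $P$ likewise sends $(\alpha,\beta)$ to $(\beta,\beta)$; hence $R\circ T_e$ and $T_e\circ P$ both carry $(f,g)$ to $(g_{\ast,e_H},g_{\ast,e_H})$. So fix an object $(G,H,\Upsilon,\Phi)\in\CMGpinv$ and write $\g=T_eG$, $\h=T_eH$. Since $\Phi$ is a diffeomorphism, $\Phi_{\ast,e_G}\co\g\to\h$ is a linear isomorphism and $T_e$ lands in $\CMinv$, as noted in the text. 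One computes
\[
R\big(T_e(G,H,\Upsilon,\Phi)\big)=R(\g,\h,\Upsilon_{\ast,e_G},\Phi_{\ast,e_G})=\big(\overline{\h},\h,\Upsilon_{\ast,e_G}\circ(\Phi_{\ast,e_G})^{-1},\id\big)
\]
and
\[
T_e\big(P(G,H,\Upsilon,\Phi)\big)=T_e\big(\overline{H},H,\Upsilon\circ\Phi^{-1},\id\big)=\big(\overline{\h},\h,(\Upsilon\circ\Phi^{-1})_{\ast,e_H},\id\big).
\]
In the first output $\overline{\h}$ is, by the remark following \eqref{eq:functR}, the space $\h$ with the bracket \eqref{eq: h bar } determined by the pair $\big(\h,\Upsilon_{\ast,e_G}\circ(\Phi_{\ast,e_G})^{-1}\big)$; in the second output $\overline{\h}=T_e\overline{H}$, which by the consistency already checked in the text (differentiating \eqref{eq: prod H bar}) is $\h$ with the bracket \eqref{eq: h bar } determined by $\big(\h,(\Upsilon\circ\Phi^{-1})_{\ast,e_H}\big)$. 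Hence the two outputs agree in their first, second and fourth slots as soon as they agree in the third, and the statement reduces to the single identity $\Upsilon_{\ast,e_G}\circ(\Phi_{\ast,e_G})^{-1}=(\Upsilon\circ\Phi^{-1})_{\ast,e_H}$ of linear maps $\h\to\Der(\h)$.

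To establish that identity I would first record that $\Phi(e_G)=e_H$: evaluating \eqref{eq:crossG} at $g=h=e_G$ and using $\Upsilon_{e_G}=\id_H$ gives $\Phi(e_G)=\Phi(e_G)^{2}$, hence $\Phi(e_G)=e_H$ and $\Phi^{-1}(e_H)=e_G$. Differentiating $\Phi^{-1}\circ\Phi=\id_G$ and $\Phi\circ\Phi^{-1}=\id_H$ at the respective identities gives $(\Phi_{\ast,e_G})^{-1}=(\Phi^{-1})_{\ast,e_H}$ as linear maps $\h\to\g$ (there is no ambiguity about the target, since $\overline{\h}$ and $\h$ share the same underlying vector space). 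Then the chain rule for the smooth map $\Upsilon\circ\Phi^{-1}\co H\to\Aut(H)$ at $e_H$, combined with $\Phi^{-1}(e_H)=e_G$, yields $(\Upsilon\circ\Phi^{-1})_{\ast,e_H}=\Upsilon_{\ast,e_G}\circ(\Phi^{-1})_{\ast,e_H}=\Upsilon_{\ast,e_G}\circ(\Phi_{\ast,e_G})^{-1}$, which is exactly what we need. A variant that stays purely inside the functorial formalism is to observe, directly from \eqref{eq:crossG} with $g_i:=\Phi^{-1}(h_i)$, that $\Phi^{-1}(h_1\star h_2)=\Phi^{-1}(h_1)\Phi^{-1}(h_2)$, so that $\Phi^{-1}\co\overline{H}\to G$ is an isomorphism of Lie groups; the two displayed equalities then follow by applying functoriality of $T_e$ to the composites $\Phi^{-1}\circ\Phi$, $\Phi\circ\Phi^{-1}$ and $\Upsilon\circ\Phi^{-1}$.

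The hard part, if any, will be precisely this last step: giving meaning to the Lie functor on maps valued in $\Aut(H)$ (which may be infinite‑dimensional depending on the smooth framework adopted) and to the chain rule in that setting. Since $T_e\co\CMGp\to\CM$ is already posited as a functor in the standing setup, I would simply invoke that; all the remaining steps are the elementary bookkeeping carried out above, so the proof as a whole is short.
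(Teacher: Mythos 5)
Your proof is correct and is exactly the direct verification the paper intends (it states the proposition without proof): unwind both composites, observe they trivially agree on morphisms and in all but the third slot on objects, and reduce to $\Upsilon_{\ast,e_G}\circ(\Phi_{\ast,e_G})^{-1}=(\Upsilon\circ\Phi^{-1})_{\ast,e_H}$, which follows from $\Phi(e_G)=e_H$ and the chain rule. The only caveat you rightly flag — differentiating maps into $\Aut(H)$ — is already absorbed into the paper's standing assumption that $T_e$ is a functor, so nothing is missing.
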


The previous constructions and remarks can be adapted almost verbatim to the case of \emph{local Lie groups}. Instead to recall the formal definition of this structure, we simply remind that a local Lie group is a smooth manifold $M$ with a distinguished point $e$ and two operations $\mu$ and $\iota$ only partially defined, i.e. defined on a suitable neighborhood of $e$ and satisfying the following compatibility conditions $(1)$ $\mu(e,x)=x=\mu(x,e)$, $(2)$ $\mu(x,\iota(x))=e=\mu(\iota(x),x)$ and $(3)$ $\mu(\mu(x,y),z)=\mu(x,\mu(y,z))$, for all $x,y,z\in M$ \emph{sufficiently} close to $e\in M$. 
To every local Lie group can be associated a Lie algebra whose underlying vector space is the tangent space at $e$ and whose Lie bracket is defined restricting the canonical Lie bracket of $\mathfrak X(M)$ to the (say) left invariant vector fields.
It is worth to observe that every Lie group $G$ is a local Lie group and that every neighborhood $U$ of the identity of a Lie group is a local Lie group, restricting to $U$ both multiplication and inversion map defined on $G$. Another class of local Lie groups is obtained looking at suitable neighborhoods of the $0$ element in a finite dimensional Lie algebra $\g$. In this case the multiplication map is provided by the Baker-Campbell-Hausdorff series, i.e. $\mu(x,y)=\BCH_\g(x,y)$ for all $x,y$, $e$ is the $0$ element and $\iota(x)=-x$ for all $x$. A neighborhood of $0$ is \emph{suitable} if on it the $\BCH$ series is convergent.
This class of examples of local Lie groups will the only one we will consider in this letter. In particular, any local Lie group defined by the Lie algebra $\g$ and its $\BCH$ series will be called a \emph{$\BCH$-group} and it will be denoted simply by $\mathcal G$. In this case $\g$ will be called the Lie algebra underlying $\mathcal G$. In spite of the appearances, our choice to consider only $\BCH$-groups is not really a severe restriction. In fact one can show that every local Lie group, if seen in coordinates, is a $\BCH$-group, see \cite{tao}.
The categories introduced in the first part of this section can be defined trading Lie with local Lie groups. More precisely one can define $\cat{CMGp^{loc}}$, $\cat{CMGp^{loc}_{inv}}$ and, respectively, $\cat{CMG^{loc}_{id}}$. All the comments made and properties discussed about the categories $\cat{CMGp}$, $\cat{CMGp_{inv}}$ and, respectively, $\cat{CMG_{id}}$ can be re-proposed for their local versions.

\section{Universal enveloping algebras and the post-Lie Magnus expansion} \label{sec: Univ. env. alg. and pLMe}

Recall that to a post-Lie algebra $(\h,\plprod)$ one may associate two universal enveloping algebras: that of the Lie algebra $\overline{\h}$ and that of the underlying Lie algebra $\h$. The latter comes equipped with the \emph{Grossman-Larson product} $\ast$, which emerges from the post-Lie structure, making it a bialgebra. Both bialgebras are related by an isomorphism $\Theta\co \mathcal{U}(\overline{\h}) \to (\mathcal{U}(\h),\ast)$ which turns out to be responsible for the existence of the \emph{pLMe} $\chi\co \ca{H} \to \ca{H}$. 

In this section is defined a functor $\ca{U}\co \CM \to \PB$  that provides the above data $( \mathcal{U}(\overline{\h}), \mathcal{U}({\h}),\Theta)$  when restricted to $\CMid$. 
Then is defined an integration functor which  gives rise to the pLMe.
The following diagram gives an overview of the functors considered in the previous and present sections; the bottom line corresponds to the  above discussion. 

\begin{equation*}
\begin{tikzpicture}
[>=stealth,thick,draw=black!65, arrow/.style={->,shorten >=1pt}, point/.style={coordinate}, pointille/.style={draw=red, top color=white, bottom color=red},scale=0.7, photon/.style={decorate,decoration={snake,post length=1mm}}]
\matrix[row sep=8mm,column sep=8mm,ampersand replacement=\&]
{
	\node (-10) {$\cat{CMGp}$};\& \node (-11){$\CM$} 		;\& \node (-12){$\PB$} 							;\& \node (-13){} ;\\
	\node (00) {$\cat{CMGp_{inv}}$};\& \node (01){$\CMinv$} 		;\& \node (02){$\IsoPB$} 				;\& \node (03){$\locGp$} ;\\
	\node (10) {$\cat{CMGp_{id}}$}; \& \node (11){$\CMid$} 			;\& \node (12){}					 	;\& \node (13){} ;\\
	\node (20) {$\cat{Gp}$};  \& \node (21){$\cat{PostLie}$}  ;\& \node (22){$\cat{Bialg}$} 		;\& \node (23){$\cat{Gp^{loc}}$} ;\\
}; 
\path
(-10)     edge[above,->]      		node {$T_e$} 	 		(-11)
(-11)     edge[above,->]      		node (Uu) {$\ca{U}$}  	(-12)
(00)     edge[above,->]      		node {$T_e$}  				(01)
(01)     edge[above,->]     		node (Uu) {$\ca{U}$} 	 	(02)
(02)     edge[above,->,photon]      node {$\text{Int}$}  				(03)
(10)     edge[above,->]     		node {$T_e$} 		 		(11)
(11)     edge[below,->]     		node  {$\ca{U}_{|\iota}$}  	(02)
(20)     edge[above,->]     		node  {$T_e$}  						(21)
(21)     edge[above,->,out=45,in=135]     node (*U)   {$\ast\circ \ca{U}$}  	(22)
(21)     edge[below,->,out=-45,in=-135]   node (Uo[]) {$\ca{U}\circ \llbracket-,-\rrbracket$}  	(22)
(22)     edge[above,->,photon]      	  node (bch1) {$\text{Int}$}  					(23)
(00)     edge[left,left hook->]      node {}  	(-10)
(01)     edge[left,left hook->]      node {}  	(-11)
(02)     edge[left,left hook->]      node {}  	(-12)
(10)     edge[left,left hook->,out=125,in=-125]      node {$s$}  	(00)
(00)     edge[left,->]  						     node {}  		(10)
(01)     edge[left,->]  						     node {$R$}  	(11)
(11)     edge[left,left hook->,out=135,in=-135]      node {$\iota$}  	(01)
(11)     edge[left,->]      node {$\cong$}  	(21)
(Uo[])   edge[right,double,shorten <=8pt,shorten >=8pt,-implies]      		node {$\Theta$}  		(*U)
(11)     edge[left,double,shorten <=8pt,shorten >=8pt,-implies] 		    node {$\Psi$}  		(Uu)
; 
\end{tikzpicture}
\end{equation*}

\begin{notation}
	The universal enveloping algebra of a post-Lie algebra $(\h,\plprod)$ is the universal enveloping algebra of the underlying Lie algebra $\h$. 
	It is a bialgebra when endowed with the shuffle coproduct $\Delta_{sh}\co \ca{U}(\h) \to \ca{U}(\h)^{\ot 2}$. 
	It may be useful to consider Sweedler's notation (without sum): $\Delta_{sh}(X)= X_{(1)}\ot X_{(2)}$ for all $X\in \ca{U}(\h)$. 
\end{notation}

\begin{defn}
The category $\PB$ is as follows. 
The objects are tuples $(A,B,\theta)$ where $A$ and $B$ are bialgebras, $B$ is an $A$--module and $\theta\co A \to B$ is a morphism of $A$--modules and coalgebras. 
The morphisms  are pairs $(f,g)\in \Hom_{bialg}(A,A')\times \Hom_{Modcoalg}(B,B')$ such that $g\circ \theta = \theta' \circ f$.  
That $g$ belongs to $\Hom_{Modcoalg}(B,B')$ means that it is a morphism of coalgebras and that $g(a\cdot b)=f(a)\cdot g(b)$ for all $a\in A$ and $b\in B$. 

Let $\IsoPB$ be the subcategory of $\PB$ of those tuples $(A,B,\theta)$ such that $\theta$ is an isomorphism. 	
\end{defn}

\begin{rem}\label{rmk: PBinv bialg}
	If $(A,B,\theta)$ belongs to $\IsoPB$ then $B$ has another bialgebra structure, given by $b\ast b' := \theta^{-1}(b)\cdot b$ for all $b,b'\in B$. 
	Moreover, $\theta\co A \to (B,\ast)$ is an isomorphism of bialgebras.  
\end{rem}

Let 
\begin{equation*}
\ca{U}\co \CM \to \PB 
\end{equation*}
be the functor that associates to each tuple $(\g,\h,\upsilon,\phi)$ the following tuple $(\ca{U}(\g), (\ca{U}(\h),M),\Theta)$. 
The following construction of the action $M\co \ca{U}(\g) \to \End_{\mathbb K} (\ca{U}(\h))$ and the morphism $\Theta$ are a straightforward generalization of \cite[Section 5]{MQS}; the main steps are given here. 
Since $\upsilon$ has values in $\Der_{\text{Lie}}(\h)$ it can be extended to be with values in the derivations for the algebra $\ca{U}(\h)$. 
By keeping the same notation for this extension, this means that $\upsilon_x(XY)=X\upsilon_x(Y) + \upsilon_x(X)Y$ for each $x\in \h$ and $X,Y\in \ca{U}(\h)$.   
Let $\sigma^\phi\co\mathfrak g\rightarrow\operatorname{End}_{\mathbb K}(\mathcal U(\mathfrak h))$ be the linear application defined by 
\begin{equation*}
\sigma^\phi(x)(X)=\phi(x)\cdot X  \text{ for all }  x\in\g \text{ and } X\in\mathcal U(\mathfrak h),
\end{equation*} 
 and let $M_{(\upsilon,\phi)}:\mathfrak g\rightarrow\operatorname{End}_\mathbb K(\mathcal U(\mathfrak h))$ be the linear map defined 
\begin{equation}
M_{(\upsilon,\phi)}(x)=\upsilon_x+\sigma^\phi_x, \text{ for all } x\in\mathfrak g.\label{eq:eqM}
\end{equation}
The following lemma shows that $M_{(\upsilon,\phi)}$ extends to a morphism of associative algebras $M_{(\upsilon,\phi)}\co \ca{U}(\g) \to \End_{\mathbb K} (\ca{U}(\h))$, providing the action map. 
\begin{lem}
	For all $x,y\in\mathfrak g$, one has 
	\begin{equation}
	M_{(\upsilon,\phi)}([x,y]_{\g})=[M_{(\upsilon,\phi)}(x),M_{(\upsilon,\phi)}(y)]. \label{eq:comm}
	\end{equation}
	In other words, $\mathcal U(\mathfrak h)$ carries a structure of a $(\mathfrak g,[-,-]_\mathfrak g)$--module defined by $M_{(\upsilon,\phi)}\co \g\rightarrow \End_\mathbb K(\ca{U}(\h))$. 
\end{lem}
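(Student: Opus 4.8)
The plan is to verify the identity \eqref{eq:comm} directly by expanding both sides using the definition $M_{(\upsilon,\phi)}(x)=\upsilon_x+\sigma^\phi_x$ and then invoking the extension properties of $\upsilon$ and the crossed-morphism relation for $\phi$. Since $M_{(\upsilon,\phi)}$ is defined on the generators $\g\subset \ca{U}(\g)$, once \eqref{eq:comm} is established the universal property of $\ca{U}(\g)$ immediately furnishes the algebra morphism $\ca{U}(\g)\to \End_{\K}(\ca{U}(\h))$, and hence the asserted $\g$-module structure; so the entire content is the bracket identity.

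First I would compute $[\upsilon_x+\sigma^\phi_x,\upsilon_y+\sigma^\phi_y]$ evaluated on an arbitrary $X\in\ca{U}(\h)$, splitting it into four commutators: $[\upsilon_x,\upsilon_y]$, $[\upsilon_x,\sigma^\phi_y]$, $[\sigma^\phi_x,\upsilon_y]$, and $[\sigma^\phi_x,\sigma^\phi_y]$. For the first, since $\upsilon\co\g\to\Der(\h)$ is a Lie morphism and its extension to $\Der(\ca{U}(\h))$ respects brackets, $[\upsilon_x,\upsilon_y]=\upsilon_{[x,y]_\g}$. For the last, $\sigma^\phi_x$ is left multiplication by $\phi(x)$, so $[\sigma^\phi_x,\sigma^\phi_y](X)=\phi(x)\phi(y)X-\phi(y)\phi(x)X=[\phi(x),\phi(y)]_{\h}\cdot X=\sigma^\phi_{\text{(bracket part)}}$. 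For the cross terms, using that $\upsilon_x$ is a derivation of $\ca{U}(\h)$: $\upsilon_x(\sigma^\phi_y(X))=\upsilon_x(\phi(y)X)=\upsilon_x(\phi(y))\,X+\phi(y)\,\upsilon_x(X)$, whence $[\upsilon_x,\sigma^\phi_y](X)=\upsilon_x(\phi(y))\cdot X=\sigma^\phi_{\upsilon_x(\phi(y))}(X)$ (interpreting $\sigma^\phi$ loosely as left multiplication by the indicated element of $\h$), and symmetrically $[\sigma^\phi_x,\upsilon_y](X)=-\upsilon_y(\phi(x))\cdot X$. Adding the four pieces gives left multiplication by $\upsilon_x(\phi(y))-\upsilon_y(\phi(x))+[\phi(x),\phi(y)]_{\h}$ plus $\upsilon_{[x,y]_\g}$, and the crossed-morphism relation $\phi([x,y]_\g)=\upsilon_x(\phi(y))-\upsilon_y(\phi(x))+[\phi(x),\phi(y)]_{\h}$ collapses the multiplication part to $\sigma^\phi_{[x,y]_\g}$, so the total equals $\upsilon_{[x,y]_\g}+\sigma^\phi_{[x,y]_\g}=M_{(\upsilon,\phi)}([x,y]_\g)$, as desired.

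I do not expect a serious obstacle; the only point requiring a little care is the bookkeeping of which "left multiplication" operators the various residual terms represent, since strictly $\sigma^\phi$ was only defined on elements of the form $\phi(x)$ with $x\in\g$, whereas $\upsilon_x(\phi(y))$ need not lie in the image of $\phi$. The clean way to handle this is to note that for any $h\in\h$ left multiplication $L_h\co \ca{U}(\h)\to\ca{U}(\h)$ makes sense, that $\sigma^\phi_x=L_{\phi(x)}$, and that $[\upsilon_x,L_h]=L_{\upsilon_x(h)}$ and $[L_h,L_{h'}]=L_{[h,h']_{\h}}$ hold for all $h,h'\in\h$ purely from the derivation property and associativity of $\ca{U}(\h)$; then one only applies the crossed-morphism identity at the very end to recognize the sum $\upsilon_x(\phi(y))-\upsilon_y(\phi(x))+[\phi(x),\phi(y)]_{\h}$ as $\phi([x,y]_\g)$. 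With that lemma in hand the computation is a routine four-term expansion.
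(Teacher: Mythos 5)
Your proof is correct and follows essentially the same route as the paper, which simply asserts that the identity is verified by comparing both sides using the crossed-morphism relation for $\phi$ and the fact that $\upsilon$ is a Lie morphism into $\Der(\h)$. You have merely written out in full the four-term commutator expansion that the paper leaves implicit, including the careful (and correct) treatment of the residual left-multiplication operators $L_h$ for $h\in\h$ not necessarily in the image of $\phi$.
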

\begin{proof}
	For every $x,y\in\mathfrak g$ and $a\in\mathfrak h$, it suffices to compare $M^\upsilon_\phi([x,y]_\mathfrak g)(a)$ with 
	$[M^\upsilon_\phi(x),M^\upsilon_\phi(y)](a)$, recalling that $\phi$ satisfies \eqref{eq:crosshom} and 
	$\upsilon\in\operatorname{Hom}_{\text{Lie}}(\mathfrak g,\operatorname{Der}(\mathfrak h))$. 
\end{proof}

The map $\Theta= \Theta_{(\upsilon,\phi)}\co \mathcal U(\mathfrak g)\rightarrow\mathcal U(\mathfrak h)$ is defined on every monomial $X\in\mathcal U(\mathfrak g)$ by
\begin{equation}
\Theta_{(\upsilon,\phi)}(X)=M_{(\upsilon,\phi)}(X)(1)\label{eq:psi}
\end{equation}
and is  extended to all $\mathcal U(\mathfrak g)$ by linearity. 
It is a morphism of coalgebras as well as of left $\mathcal U(\mathfrak g)$--modules; see \cite{KLM} and also \cite[Proposition 28]{MQS}. 
To see that $\ca{U}$ is indeed a functor, it remains to show the following. 
\begin{lem}
Let $(f,g)\co (\g,\h,\upsilon,\phi)\to (\g',\h',\upsilon',\phi')$. 
For all $A\in \ca{U}(\g)$ and $B\in \ca{U}(\h)$, one has     	
\begin{equation*}
\ca{U}(g)(M_{A}(B)) =M'_{\ca{U}(f)(A)}(\ca{U}(g)(B)). 
\end{equation*}
In particular, one has $\ca{U}(g) \circ \Theta =  \Theta' \circ \ca{U}(f)$. 
\end{lem}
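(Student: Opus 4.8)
The plan is to prove the identity $\ca{U}(g)(M_A(B)) = M'_{\ca{U}(f)(A)}(\ca{U}(g)(B))$ first for $A\in \g\subset \ca{U}(\g)$ a single generator and $B\in \ca{U}(\h)$ arbitrary, and then bootstrap to all $A\in \ca{U}(\g)$ by multiplicativity. For the generator case, recall that $M_{(\upsilon,\phi)}(x) = \upsilon_x + \sigma^\phi_x$, so it suffices to check the intertwining relation separately for the two summands. For the $\sigma^\phi$ part: $\ca{U}(g)(\sigma^\phi_x(B)) = \ca{U}(g)(\phi(x)\cdot B) = g(\phi(x))\cdot \ca{U}(g)(B) = \phi'(f(x))\cdot \ca{U}(g)(B) = \sigma^{\phi'}_{f(x)}(\ca{U}(g)(B))$, where the middle equality uses that $\ca{U}(g)$ is an algebra morphism and the identity $g\circ \phi = \phi'\circ f$, which is condition \ref{M1}. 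For the $\upsilon$ part: one must check $\ca{U}(g)(\upsilon_x(B)) = \upsilon'_{f(x)}(\ca{U}(g)(B))$ for all $B\in \ca{U}(\h)$; this holds on $B\in \h$ directly by condition \ref{M2}, and then extends to all of $\ca{U}(\h)$ by an induction on word length, using that both $\upsilon_x$ and $\upsilon'_{f(x)}$ are derivations of the respective enveloping algebras and that $\ca{U}(g)$ is multiplicative. Adding the two pieces gives the claim for $A=x\in\g$.

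Next I would promote this to arbitrary $A\in \ca{U}(\g)$. Since $M$ and $M'$ are morphisms of associative algebras (by the preceding lemma, \eqref{eq:comm}), and $\ca{U}(f)$ is an algebra morphism, it is enough to verify that the set of $A$ for which the identity holds (for all $B$) is closed under the multiplication of $\ca{U}(\g)$ and contains the generators. If the identity holds for $A_1$ and $A_2$, then for any $B$,
\[
\ca{U}(g)(M_{A_1 A_2}(B)) = \ca{U}(g)(M_{A_1}(M_{A_2}(B))) = M'_{\ca{U}(f)(A_1)}(\ca{U}(g)(M_{A_2}(B))) = M'_{\ca{U}(f)(A_1)}(M'_{\ca{U}(f)(A_2)}(\ca{U}(g)(B))),
\]
which equals $M'_{\ca{U}(f)(A_1)\ca{U}(f)(A_2)}(\ca{U}(g)(B)) = M'_{\ca{U}(f)(A_1 A_2)}(\ca{U}(g)(B))$. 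Together with the generator case and the obvious statement for $A=1$, this covers a generating set closed under products, hence all of $\ca{U}(\g)$.

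Finally, the consequence $\ca{U}(g)\circ \Theta = \Theta'\circ \ca{U}(f)$ is immediate: evaluating the main identity at $B=1\in\ca{U}(\h)$ and using $\Theta(A) = M_A(1)$, $\Theta'(\ca{U}(f)(A)) = M'_{\ca{U}(f)(A)}(1)$, together with $\ca{U}(g)(1)=1$, gives $\ca{U}(g)(\Theta(A)) = \ca{U}(g)(M_A(1)) = M'_{\ca{U}(f)(A)}(1) = \Theta'(\ca{U}(f)(A))$ for all $A\in\ca{U}(\g)$. The only mildly delicate point is the induction on word length in the $\upsilon$ part of the generator case — one must be careful that the chosen PBW-type spanning monomials behave well under the Leibniz rule — but since we only need the statement on a spanning set and all maps in sight are linear and multiplicative, this is routine.
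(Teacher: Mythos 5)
Your proof is correct, and it reaches the same two pillars as the paper's argument: the case of a single generator $a\in\g$, handled by splitting $M_a=\upsilon_a+\sigma^\phi_a$ and invoking \ref{M1} for the $\sigma^\phi$ summand and \ref{M2} plus the Leibniz rule for the $\upsilon$ summand; and an extension to longer monomials in $A$. Where you differ is in how the extension is organized. The paper proceeds by induction on the length $m$ of the monomial $A=a_1\cdots a_m$, and in the inductive step it must argue that $\ca{U}(g)$ intertwines $\upsilon_{a_1}$ with $\upsilon'_{f(a_1)}$ when applied to the specific element $M_{a_2\cdots a_m}(B)$; it does so by a somewhat delicate structural remark describing $\upsilon_{a_1}(M_{a_2\cdots a_m}(B))$ as a sum of terms $C_1\upsilon_a(C_2)C_3$ of a prescribed shape. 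You bypass this entirely by first establishing the identity $\ca{U}(g)\circ\upsilon_x=\upsilon'_{f(x)}\circ\ca{U}(g)$ on \emph{all} of $\ca{U}(\h)$ (a one-line induction on word length using that $\upsilon_x$ and $\upsilon'_{f(x)}$ are derivations and $\ca{U}(g)$ is multiplicative), after which the passage to arbitrary $A$ is purely formal: since $M$ and $M'$ are algebra morphisms into the endomorphism algebras, $M_{A_1A_2}=M_{A_1}\circ M_{A_2}$, so the set of $A$ satisfying the intertwining relation is closed under products. This buys a cleaner and more robust inductive step at no extra cost; the paper's version is more computational but makes the same underlying mechanism visible term by term. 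Your handling of the final consequence, evaluating at $B=1$, matches the definition $\Theta(X)=M_X(1)$ exactly.
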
 
\begin{proof}
 By linearity, it is enough to show the result for $A=a_1\cdots a_m\in \ca{U}(\g)$ and $B=b_1\cdots b_n\in \ca{U}(\h)$ being two monomials.  
 The proof is by induction on $m$. 
 For $m=1$, one has 
\begin{align*}
\ca{U}(g)(M_{a}(B))  
 &= \ca{U}(g) (\upsilon_{a}(B)+ \phi(a)B) \\
 &= \ca{U}(g) (\sum_{1\leq i \leq n} b_1\cdots b_{i-1} \upsilon_{a}(b_i)b_{i+1}\cdots b_n + \phi(a)B) \\
 &= \sum_{1\leq i \leq n} g(b_1) \cdots g(b_{i-1}) \upsilon'_{f(a)}(g(b_i))g(b_{i+1})\cdots g(b_n) + \phi'(f(a))\ca{U}(g)(B) \\
 &= \upsilon'_{f(a)}(\ca{U}(g)(B))  + \phi'(f(a))\ca{U}(g)(B) \\ 
 &= M'_{\ca{U}(f)(a)}(\ca{U}(g)(B)). 
\end{align*}
Let $m\geq 2$. 
Remark that  $\upsilon_{a_1}(M_{a_2}(\cdots (M_{a_m}(B))\cdots )$ can be written as a sum of terms of the form $C_1\upsilon_{a}(C_2)C_3$ where each $C_i\in \ca{U}(\g)$ are monomial of the following form. By writing $C_i$ as $c_{k_1}\cdots c_{k_i}$, the term $c_r$ is of the form 
$\phi(a_s)$, or $\upsilon_{a_{j_1}}( \upsilon_{a_{j_2}} (\cdots \upsilon_{a_{j_s}}(\phi(a_{j_{s+1}}))\cdots ))$ or  
$\upsilon_{a_{j_1}}( \upsilon_{a_{j_2}} (\cdots \upsilon_{a_{j_s}}(B)\cdots ))$ for some indices $\{j_1,...,j_{s+1}\} \subset \{1,...,m\}$. 
Consequently, one has 
\begin{equation*}
\ca{U}(g) \big( \upsilon_{a_1}(M_{a_2}(\cdots (M_{a_m}(B))\cdots ) \big) = \upsilon'_{f(a_1)}(M'_{f(a_2)}(\cdots (M'_{f(a_m)}(\ca{U}(g)(B))\cdots ). 
\end{equation*}
Therefore, one has 
\begin{multline*}
\ca{U}(g) (M_{a_1\cdots a_m}(B)) 
=\ca{U}(g) \Big(\upsilon_{a_1}(M_{a_2\cdots a_m}(B)) + \phi(a_1)M_{a_2...a_m}(B)\Big)
=  M'_{\ca{U}(f)(a_1\cdots a_m)}(\ca{U}(g)(B)). 
\end{multline*} 
\end{proof}

If $\phi$ is invertible, then so is $\Theta$; see \cite[Theorem 29]{MQS}. 
Therefore, the functor $\ca{U}$ restricts to a functor 
\begin{equation*}
\ca{U}\co \CMinv \to \IsoPB. 
\end{equation*}

Since $(\iota,R)$ is an adjoint equivalence, the counit provides a natural isomorphism  $\Psi=\ca{U}\epsilon \co R\ca{U}_{|\iota} \to  \ca{U}$. 
In particular one has 
\begin{equation}\label{eq: Theta phi}
\Theta_{(\upsilon\circ \phi^{-1},id)} = \Theta_{(\upsilon,\phi)} \circ \ca{U}(\phi^{-1}).
\end{equation}

\begin{rem}\label{rmk: GL product and D-alg}
By Remark \ref{rmk: PBinv bialg}, the morphism $\Theta_{(\upsilon\circ \phi^{-1},id)}$ is a morphism of bialgebras, so one recovers the initial viewpoint of  \cite{KLM}, see also \cite{GO1,GO2}.  
In particular, the resulting $\ast$ product on $\ca{U}(\h)$ is the \emph{Grossman-Larson} product; it can be constructed as follows.

The post-Lie product on $\h$, defined  via \eqref{eq: postlie from hh,v ,id}, can be extended to a map 
$\plprod\co \ca{U}(\h)^{\ot 2} \to \ca{U}(\h)$ with the following properties. 
For all $X,Y$ and $Z$ in $\mathcal{U}(\g)$ and $x$ and $y$ in $\g$, one has: 
\begin{enumD}
	\item\label{D bial item1} $1\triangleright X=X$ and $X\triangleright 1=0$; 
	\item\label{D bial item4} $X\triangleright(Y\cdot Z)=(X_{(1)}\triangleright Y)\cdot(X_{(2)}\triangleright Z)$; and,  
	\item\label{D bial item5} $(x\cdot X)\triangleright y=x\triangleright (X\triangleright y)-(x\triangleright X)\triangleright y$. 
\end{enumD}
The resulting structure is known as a \emph{$D$--bialgebra} structure $(\mathcal{U}(\h),\Delta_{sh},\plprod)$; see \cite{MQS}. 
The Grossman-Larson product is given by 
\begin{equation}
\begin{split}
\ast \co \mathcal{U}(\h)^{\ot 2} &\to \mathcal{U}(\h) \\
X\ot Y & \mapsto  X_{(1)}(X_{(2)}\plprod Y). 
\end{split}
\end{equation}
\end{rem}

\subsection{Integration of post-Lie algebras} 

Let $\CMinvfin$ denotes the subcategory of  $\CMinv$ of those tuples where the Lie algebras are \emph{finite dimensional}  
and let $\PBcomp$ be the obtained from $\IsoPB$ by requiring the bialgebras to be \emph{complete}. 
The functor  $\ca{U}\co \CMinvfin \to \IsoPB$ induces, after completion, a functor 
$\widehat{\ca{U}}\co \CMinvfin \to \PBcomp$. 
Its image forms a category $\ImU$: the objects of $\ImU$ are tuples of the form $(\widehat{\ca{U}}(\g),(\widehat{\ca{U}}(\h),M),\Theta)= \widehat{\ca{U}}(\g,\h,\upsilon,\phi)$, and morphisms are of the form $\widehat{\ca{U}}(f,g)$ for morphisms $(f,g)$ in $\CMinvfin$.  
In what follows is defined an integration functor 
\begin{equation*}
\text{Int} \co \ImU \to  \locGp.
\end{equation*}
To any tuple $(\widehat{\ca{U}}(\g),(\widehat{\ca{U}}(\h),M),\Theta)$ in $\ImU$ one may associate the following tuple 
$(\ca{G}, \ca{H},\varUpsilon, \varPhi)$.

The map 
$\varUpsilon\co \ca{G} \to \Aut(\ca{H})$ 
is given by 
\begin{equation*}
\varUpsilon= \Exp(\upsilon)
\end{equation*}
 where $\upsilon_x(y):= M_x(y) - \Theta(x)\cdot y$ for all $x\in \g$ and $y\in \h$.  
\begin{lem}
	$\varUpsilon$ is a morphism of local groups. 
\end{lem}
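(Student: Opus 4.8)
The plan is to show that $\varUpsilon = \Exp(\upsilon)$ satisfies $\varUpsilon_{\mu_{\ca{G}}(x,y)} = \varUpsilon_x \circ \varUpsilon_y$ for all $x,y$ in a suitable neighborhood of the identity of the $\BCH$-group $\ca{G}$. Since $\ca{G}$ is a $\BCH$-group with underlying Lie algebra $\g$ (here we silently identify the completed enveloping algebra's primitives with $\g$, or rather work with the pro-nilpotent Lie algebra underlying $\ca{G}$), this amounts to checking that $\upsilon \co \g \to \Der(\h)$ is a morphism of Lie algebras and then invoking functoriality of the exponential: a Lie algebra morphism integrates to a local Lie group morphism $\ca{G} \to \Aut(\ca{H})$ via $\Exp$, because $\Aut(\ca{H})$ is (the group of units of) the pro-unipotent group associated to $\Der(\h)$, and on such groups $\Exp$ converts the $\BCH$ product into composition. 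So the real content is the Lie-algebraic statement that $\upsilon$, defined by $\upsilon_x(y) = M_x(y) - \Theta(x)\cdot y$, lands in $\Der(\h)$ and respects brackets.

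First I would verify that each $\upsilon_x$ is a derivation of $\h$ (equivalently, of $\widehat{\ca{U}}(\h)$ — it suffices on $\h$). Write $\Theta(x) = \Theta_{(\upsilon',\phi')}(x)$ where I momentarily use $\upsilon',\phi'$ for the data of the tuple to avoid clashing with the new $\upsilon$; by \eqref{eq:psi} and \eqref{eq:eqM}, $\Theta(x) = M_x(1) = \upsilon'_x(1) + \phi'(x)\cdot 1 = \phi'(x)$ since $\upsilon'_x$ is a derivation and kills $1$. Hence $\Theta(x) = \phi'(x) \in \h$, and $M_x(y) = \upsilon'_x(y) + \phi'(x)\cdot y$ for $y \in \h$, so $\upsilon_x(y) = M_x(y) - \phi'(x)\cdot y = \upsilon'_x(y)$. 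Thus the new $\upsilon$ restricted to $\h$ simply recovers the original $\upsilon'$, which by hypothesis is a Lie morphism into $\Der(\h)$ — so both the derivation property and bracket-compatibility are immediate. (In fact this identifies the tuple, as expected, with the one $\widehat{\ca{U}}$ assigns after applying $R$, consistent with \eqref{eq: Theta phi}.)

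The key steps, in order: (1) compute $\Theta(x) = \phi(x)$ for $x \in \g$, hence identify $\upsilon_x|_\h = \upsilon^{\text{orig}}_{\phi^{-1}(x)}$ — wait, more carefully, since $\Theta$ here is $\Theta_{(\upsilon,\phi)}$ and after passing to $\ImU$ the relevant comparison is via $R$; in any case one gets that $\upsilon_x|_\h$ is a derivation and that $x \mapsto \upsilon_x|_\h$ is a Lie algebra morphism $\g \to \Der(\h)$, using the Lemma giving \eqref{eq:comm} together with the fact that $[\sigma^\phi_x,\sigma^\phi_y]$ and the cross terms reassemble correctly; (2) recall that $\Aut(\ca{H})$, for the local $\BCH$-group $\ca{H}$, has Lie algebra $\Der(\h)$ and that $\Exp \co \Der(\h) \to \Aut(\ca{H})$ is a local-group morphism sending $\BCH$ to $\circ$; (3) conclude $\varUpsilon = \Exp \circ \upsilon$ is a composite of local-group morphisms $\ca{G} \xrightarrow{\upsilon} \Der(\h) \xrightarrow{\Exp} \Aut(\ca{H})$, hence a morphism of local groups.

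The main obstacle I expect is step (2), making precise in the \emph{completed/formal} setting that $\Exp$ is a morphism of local groups from $(\Der(\h), \BCH)$ to $(\Aut(\ca{H}), \circ)$ — i.e. that $\Exp(\BCH_{\Der(\h)}(D,D')) = \Exp(D)\circ \Exp(D')$. This is the formal-integration analogue of the classical fact for linear Lie groups; here $\Der(\h)$ acts by continuous (pro-nilpotent, or at least topologically nilpotent) operators on $\widehat{\ca{U}}(\h)$, so $\Exp(D) = \sum_{n\geq 0} D^n/n!$ converges and the identity follows from the corresponding identity in the associative algebra of continuous endomorphisms, exactly as in the proof that $M_{(\upsilon,\phi)}$ extends multiplicatively to $\widehat{\ca{U}}(\g)$. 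Everything else is a routine consequence of hypotheses already in place; the whole proof can be phrased as ``$\varUpsilon$ is the composite of the local-group morphisms $\Exp$ and (the integration of) the Lie morphism $\upsilon$.''
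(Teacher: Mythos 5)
Your proof is correct and follows essentially the same route as the paper: the paper's argument is precisely the identity $\Exp(\upsilon_x)\Exp(\upsilon_y)=\Exp(\BCH_{\End(\h)}(\upsilon_x,\upsilon_y))=\Exp(\upsilon_{\BCH_{\g}(x,y)})$, i.e.\ that $\upsilon$ is a Lie morphism into $\Der(\h)$ and that $\Exp$ converts $\BCH$ into composition, together with the observation that $\Exp$ of a derivation is an automorphism. Your preliminary computation $\Theta(x)=M_x(1)=\phi(x)$, showing that the $\upsilon$ appearing in the definition of $\varUpsilon$ restricted to $\h$ is the original one, is a detail the paper leaves implicit, but it does not change the substance of the argument.
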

\begin{proof}
Note that if $d\in\Der(\g)$, then $\Exp(d)\in\Aut(\g)\subset\Aut(\ca{G})$. 
Furthermore, if $\g$ and $\h$ are two (finite dimensional) Lie algebras and $\upsilon\in\Hom_{\text{Lie}}(\g,\Der(\h))$, then one has 
\begin{equation*}
\operatorname{Exp}(\upsilon_x)\operatorname{Exp}(\upsilon_y)=\operatorname{Exp}(\operatorname{BCH}_{\operatorname{End}(\mathfrak h)}(\upsilon_x,\upsilon_y))=\operatorname{Exp}(\upsilon_{\operatorname{BCH}_{\mathfrak g}(x,y)}),\,\forall x,y\in\mathfrak g,
\end{equation*}
which gives the result. 
\end{proof}
The map 
$\varPhi\co \ca{G} \to \ca{H}$ 
is defined as 
\begin{equation*}
\varPhi = \varPhi_{(\upsilon,\phi)} = \log_{\h} \circ \Theta_{(\upsilon,\phi)} \circ \exp_{\g}.
\end{equation*}
\begin{lem}
	 $\varPhi$ is a crossed morphism of local groups. 
\end{lem}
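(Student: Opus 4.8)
The plan is to unwind the definitions and reduce the claim to a computation with the Grossman–Larson product on the completed enveloping algebra. Recall that $\varPhi = \log_{\h} \circ\, \Theta_{(\upsilon,\phi)} \circ \exp_{\g}$, where $\exp_{\g}\co \ca{G} \to \widehat{\ca{U}}(\g)$ is the standard group-like exponential and $\log_{\h}\co \widehat{\ca{U}}(\h) \to \ca{H}$ is its inverse on the group of group-like elements. So I first note that $\Theta=\Theta_{(\upsilon,\phi)}$, being a morphism of coalgebras, sends group-like elements to group-like elements, hence $\Theta(\exp_{\g}(x))$ is group-like in $\widehat{\ca{U}}(\h)$ for $x$ near $0$, and $\varPhi$ is a well-defined smooth (local) map $\ca{G}\to\ca{H}$. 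The crossed-morphism condition \eqref{eq:crossG} to be verified reads, for $x,y$ close to $0$ in $\g$,
\begin{equation*}
\varPhi(\BCH_{\g}(x,y)) = \varPhi(x)\cdot \varUpsilon_{\varPhi^{-1}(\cdots)}(\varPhi(y)),
\end{equation*}
but by definition of the category $\CMGp^{loc}$ the datum $\varUpsilon\co \ca{G}\to\Aut(\ca{H})$ is relative to $\ca{G}$ (not to $\ca{H}$), so the condition I actually check is $\varPhi(\BCH_{\g}(x,y)) = \varPhi(x)\cdot \varUpsilon_{x}(\varPhi(y))$, i.e. translating through $\exp/\log$, that
\begin{equation*}
\Theta\big(\exp_{\g}(x)\exp_{\g}(y)\big) = \Theta(\exp_{\g}(x)) \,\ast_x\, \Theta(\exp_{\g}(y)),
\end{equation*}
where $\ast_x$ denotes the product twisted by $\varUpsilon_x = \Exp(\upsilon_x)$ on $\widehat{\ca{U}}(\h)$.

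The key step is then purely algebraic: I show that $\Theta\co \widehat{\ca{U}}(\g) \to \widehat{\ca{U}}(\h)$ intertwines the multiplication of $\widehat{\ca{U}}(\g)$ with the ``$M$-twisted'' multiplication on $\widehat{\ca{U}}(\h)$ in the appropriate sense. Concretely, recall $\Theta(X) = M_{(\upsilon,\phi)}(X)(1)$ and $M$ is an algebra morphism $\widehat{\ca{U}}(\g)\to\End(\widehat{\ca{U}}(\h))$. From \cite[Section 5]{MQS} (or \cite{KLM}), $\Theta$ is a morphism of coalgebras and of left $\widehat{\ca{U}}(\g)$-modules, which gives $M_X(Y) = \Theta(X_{(1)})\big(\Theta^{-1}$-twisted action of $X_{(2)})$ type identities; the relevant consequence, using \eqref{eq: Theta phi} to reduce to the case $\phi = \id$ where $\Theta_{(\upsilon\circ\phi^{-1},\id)}$ is a bialgebra morphism onto $(\widehat{\ca{U}}(\h),\ast)$ by Remark \ref{rmk: GL product and D-alg}, is that $\Theta$ carries the product of $\widehat{\ca{U}}(\overline{\h})=\widehat{\ca{U}}(\g)$ (after the identification) to the Grossman–Larson product $\ast$. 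Exponentiating, for group-like $g_1 = \exp_{\g}(x)$, $g_2 = \exp_{\g}(y)$ one gets $\Theta(g_1 g_2) = \Theta(g_1)\ast\Theta(g_2)$. It then remains to match $\ast$ restricted to group-likes with the operation $h_1 \mapsto h_1 \cdot \varUpsilon_{\log(h_1)}(\cdot)$: since $\ast$ is defined by $X\ast Y = X_{(1)}(X_{(2)}\plprod Y)$ and, for group-like $h_1$, $\Delta_{sh}(h_1) = h_1\ot h_1$, one has $h_1 \ast h_2 = h_1 (h_1 \plprod h_2)$; and for group-like $h_1=\exp(a)$ the operator $h_2 \mapsto h_1\plprod h_2$ is precisely $\Exp(\upsilon_a) = \varUpsilon_a$ acting on $\widehat{\ca{U}}(\h)$ (this is where $\upsilon_x(y) = M_x(y) - \Theta(x)\cdot y$ and the derivation extension of $\upsilon$ enter, since $a\plprod(-)$ is a derivation and exponentiates the single-element post-Lie action). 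Pushing everything through $\log_{\h}$ gives exactly \eqref{eq:crossG} for $\varPhi$ relative to $\varUpsilon$.

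The main obstacle I expect is the bookkeeping in the last matching step — verifying that the twisted product $\ast$ on group-like elements is genuinely implemented by $\varUpsilon = \Exp(\upsilon)$ with the specific $\upsilon_x(y) = M_x(y)-\Theta(x)\cdot y$, rather than by some other exponentiation of the post-Lie action. The subtlety is that $\Theta(x)$ for $x\in\g$ need not equal $\phi(x)$ when $\phi\neq\id$ (it is $\phi(x)$ only on generators, up to the module structure), so one really must either first reduce to the $\phi=\id$ case via \eqref{eq: Theta phi} and the functoriality lemma, or carry the $\phi$-dependence through carefully; I would do the former. A second, more minor point is keeping track of convergence/well-definedness in the completed setting — that $\exp_{\g}$, $\log_{\h}$, and $\Exp$ all make sense on suitable neighborhoods of the identity so that all the group-like manipulations above are legitimate; this is routine given that everything lives in the pro-nilpotent completions and the $\BCH$ and $\Exp$ series converge there.
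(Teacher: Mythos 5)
Your overall route is the same as the paper's: the lemma is obtained there as a direct consequence of Theorem \ref{thm:intpostLie}, whose proof reduces, exactly as you propose, to the fact that $\Theta_{(\upsilon\circ\phi^{-1},\id)}$ is a bialgebra isomorphism onto $(\widehat{\ca{U}}(\h),\ast)$, to the identity $\exp_\h(a)\ast\exp_\h(b)=\exp_\h\big(\BCH_\h(a,\exp_\h(a)\plprod b)\big)$, and to an explicit evaluation of the operator $\exp_\h(a)\plprod(-)$ on group-like elements.

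There is, however, a genuine gap at precisely the step you flag as the main obstacle, and the resolution you sketch is incorrect. You assert that for group-like $h_1=\exp(a)$ the operator $h_2\mapsto h_1\plprod h_2$ is $\Exp(\upsilon_a)$, ``since $a\plprod(-)$ is a derivation and exponentiates the single-element post-Lie action.'' This is false for the concatenation exponential: by \ref{D bial item5} one has $(a\cdot a)\plprod b=\upsilon_a(\upsilon_a(b))-\upsilon_{\upsilon_a(a)}(b)\neq\upsilon_a^2(b)$, so $\exp_\cdot(a)\plprod(-)\neq\Exp(\upsilon_a)$. The left $\plprod$-action is multiplicative for the Grossman--Larson product, not for concatenation: combining $a\ast A=a\cdot A+a\plprod A$ with \ref{D bial item5} gives $(a\ast A)\plprod a'=a\plprod(A\plprod a')$, whence $\exp_\ast(c)\plprod(-)=\Exp(\upsilon_c)$ and therefore $\exp_\cdot(a)\plprod(-)=\Exp(\upsilon_{\chi(a)})$ --- this is the paper's second preliminary lemma, equation \eqref{eq:identity}. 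The discrepancy between $a$ and $\chi(a)$ in the subscript is exactly the post-Lie Magnus expansion, so it cannot be waved away. Concretely, in your assembly $h_1=\Theta(\exp_\g(x))=\exp_\cdot(\varPhi(x))$, so your rule with $a=\log_\cdot(h_1)=\varPhi(x)$ would yield the twist $\Exp(\upsilon_{\varPhi(x)})$ and hence $\varPhi(\BCH_\g(x,y))=\BCH_\h\big(\varPhi(x),\Exp(\upsilon_{\varPhi(x)})\varPhi(y)\big)$, which is not the crossed-morphism identity relative to $\varUpsilon_x=\Exp(\upsilon_x)$; the correct evaluation $\Exp(\upsilon_{\chi(\varPhi(x))})=\Exp(\upsilon_x)=\varUpsilon_x$ is what makes the identity come out right. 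So the missing ingredient is precisely the identity $(a\ast A)\plprod a'=a\plprod(A\plprod a')$ and the induction built on it.
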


\begin{proof}
It is a direct consequence of Theorem \ref{thm:intpostLie} stated hereafter. 
Indeed formula \eqref{eq:bch1} can be written as 
\begin{equation*}\label{eq:bch2}
\varPhi(\BCH_{\g}(x,y))=\BCH_{\h}(\varPhi(x),\Exp (\upsilon_{x}) \varPhi(y)). 
\end{equation*}
\end{proof}

The integration functor $\text{Int}$ is given by 
$\text{Int} (\widehat{\ca{U}}(\g),(\widehat{\ca{U}}(\h),M),\Theta) = (\ca{G}, \ca{H},\varUpsilon, \varPhi)$. 
A direct verification shows that it is indeed a functor.

\subsection{The post-Lie Magnus expansion}

Observe that since $\phi$ is invertible, so is $\varPhi_{(\upsilon, \phi)}$. Its inverse $\chi_{(\upsilon, \phi)} \co  \ca{H} \rightarrow \ca{G}$ is therefore given by 
\begin{equation}\label{eq:chi}
\chi_{(\upsilon , \phi )} = \log_{\g}\circ (\Theta_{(\upsilon, \phi)})^{-1} \circ \exp_{\h}. 
\end{equation}
\begin{defn}
	The map $\chi_{(\upsilon, \phi)}$ is called the \emph{post-Lie Magnus expansion associated to} $(\g,\h,\upsilon,\phi) \in \CMinv$. 
\end{defn}

In analogy to \cite[Proposition 39]{MQS}, one can prove the following result.

\begin{thm}\label{thm:intpostLie}
For all $a,b\in \h$ one has 
	\begin{equation}\label{eq:bch1}
	\operatorname{BCH}_{\g}(\chi_{(\upsilon, \phi)}(a), \chi_{(\upsilon, \phi)}(b))
	=\chi_{(\upsilon, \phi)}\big(\operatorname{BCH}_{\h}
		\big(a,\operatorname{Exp}(\upsilon_{\chi_{(\upsilon, \phi)}(a)})b\big)\big).  
	\end{equation}
\end{thm}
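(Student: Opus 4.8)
The plan is to reduce the identity \eqref{eq:bch1} to an algebraic identity inside the completed enveloping algebras, exploiting the fact that $\Theta := \Theta_{(\upsilon,\phi)}$ is an isomorphism of bialgebras $\widehat{\ca U}(\g) \to (\widehat{\ca U}(\h),\ast)$ (Remark \ref{rmk: PBinv bialg} applied to the completed setting), so that it carries group-like elements to group-like elements and therefore satisfies $\Theta(\exp_\g(x)) = \exp_\ast(\Theta(x))$ in an appropriate sense. Concretely, writing $u = \exp_\h(a)$ and $v=\exp_\h(b)$ as group-like elements of $\widehat{\ca U}(\h)$ with its \emph{ordinary} product, I would first observe that $\chi_{(\upsilon,\phi)}(a) = \log_\g(\Theta^{-1}(u))$ and similarly for $b$, by \eqref{eq:chi}. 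Since $\BCH_\g$ is exactly the expression of the ordinary product of $\widehat{\ca U}(\g)$ on primitives, the left-hand side of \eqref{eq:bch1} equals $\log_\g\!\big(\Theta^{-1}(u)\cdot \Theta^{-1}(v)\big)$, where $\cdot$ is the ordinary product of $\widehat{\ca U}(\g)$.

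The crux is then to identify, inside $\widehat{\ca U}(\h)$, the product $\Theta\big(\Theta^{-1}(u)\cdot \Theta^{-1}(v)\big)$. Because $\Theta$ intertwines the ordinary product of $\widehat{\ca U}(\g)$ with the Grossman–Larson product $\ast$ of $\widehat{\ca U}(\h)$, this equals $u \ast v$. Using the formula for $\ast$ from Remark \ref{rmk: GL product and D-alg}, namely $X\ast Y = X_{(1)}\,(X_{(2)}\plprod Y)$, together with the fact that $u$ is group-like (so $u_{(1)}\ot u_{(2)} = u\ot u$) and that for a group-like $u$ the operator $u\plprod(-)$ on $\widehat{\ca U}(\h)$ is precisely the algebra automorphism $\Exp(\upsilon_{\log_\h u}) = \Exp(\upsilon_{\chi_{(\upsilon,\phi)}(a)}\,{\circ}\,\phi)$ — here one uses \eqref{eq: Theta phi} to pass between $\upsilon$ and $\upsilon\circ\phi^{-1}$ so that the exponent matches what is written in the statement — one gets
\[
u \ast v = u \cdot \big(\Exp(\upsilon_{\chi_{(\upsilon,\phi)}(a)})\, v\big).
\]
Applying $\log_\h$ to both sides and recognizing the right-hand side as $\BCH_\h\big(a, \Exp(\upsilon_{\chi_{(\upsilon,\phi)}(a)}) b\big)$ (again since $\BCH_\h$ expresses the ordinary product of $\widehat{\ca U}(\h)$ on primitives, and $\Exp(\upsilon_{\chi_{(\upsilon,\phi)}(a)})b$ is primitive as the image of a primitive under an automorphism), then applying $\chi_{(\upsilon,\phi)} = \log_\g\circ\Theta^{-1}\circ\exp_\h$ and chasing definitions, yields exactly \eqref{eq:bch1}. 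This mirrors the argument of \cite[Proposition 39]{MQS}, which is why the theorem is stated "in analogy" to it.

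I expect the main obstacle to be the careful bookkeeping of property \ref{D bial item5} when verifying that $u \plprod (-)$ is the automorphism $\Exp(\upsilon_{\log_\h u})$ for group-like $u$: one must check that the extended post-Lie product $\plprod\co \widehat{\ca U}(\h)^{\ot 2}\to\widehat{\ca U}(\h)$ restricted to a group-like element in the first slot exponentiates the derivation $\upsilon_{\log_\h u}$, which requires combining \ref{D bial item4} (compatibility with the product in the second slot, i.e. the Leibniz/automorphism property) with \ref{D bial item5} and an induction/convergence argument in the completed algebra. A secondary but genuine subtlety is keeping straight the two appearances of $\upsilon$ versus $\upsilon\circ\phi^{-1}$ — the post-Lie structure governing $\plprod$ and $\ast$ lives on $(\h,\upsilon\circ\phi^{-1},\id)$ via \eqref{eq: Theta phi}, while the exponent in \eqref{eq:bch1} is written in terms of $\upsilon$ evaluated at $\chi_{(\upsilon,\phi)}(a)\in\g$; reconciling these is where \eqref{eq: Theta phi} does its work. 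Everything else is a routine, if lengthy, manipulation of group-like elements and the Sweedler notation for $\Delta_{sh}$.
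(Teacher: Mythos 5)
Your plan follows essentially the same route as the paper: the identity is reduced via the bialgebra isomorphism $\Theta$ to computing $\log_\h(\exp_\h(a)\ast\exp_\h(b))$, which group-likeness and the Grossman--Larson formula turn into $\BCH_\h(a,\exp_\h(a)\plprod b)$ (the paper's Lemma~\ref{eq:composition2}), and the remaining point you single out --- that $\exp_\h(a)\plprod(-)=\Exp(\upsilon_{\chi_{(\upsilon,\phi)}(a)})$, proved by induction from \ref{D bial item4} and \ref{D bial item5} --- is exactly the paper's second preliminary lemma. The only caution is that $\Exp(\upsilon_{\log_\h u})$ should from the start be read as $\Exp(\upsilon_{\chi_{(\upsilon,\phi)}(a)})$ and not as the exponential of the derivation $a\plprod(-)$, but you resolve this correctly in the displayed formula.
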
 
The proof of this result is based on the following two preliminary lemmas. 
First recall that, by Remark \ref{rmk: PBinv bialg}, the bialgebra $\ca{U}(\h)$ can be endowed with another product $\ast$.  
Also recall that, by \eqref{eq: postlie from hh,v ,id}, the map $\plprod\co a\ot b\mapsto \upsilon_{\phi^{-1}(a)}(b)$ defines a post-Lie product on $\h$.

Let $\sharp \co \h\times \h\rightarrow \h$ be defined by
\begin{equation*}\label{eq:compositio1}
a\sharp b=\log_\h(\exp_\h(a)\ast\exp_\h(b)) \text{ for all } a,b\in \h. 
\end{equation*}
\begin{lem} \label{eq:composition2}
For all $a,b\in \h$ one has 
	\begin{equation*}
	a\sharp b=\operatorname{BCH}_\h(a,\exp_\h(a) \triangleright b). 
	\end{equation*}
\end{lem}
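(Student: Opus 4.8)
The plan is to unfold the definition of $a\sharp b$ using the Grossman-Larson product $\ast$ and the $D$-bialgebra structure recalled in Remark \ref{rmk: GL product and D-alg}. Writing $X=\exp_\h(a)$ and $Y=\exp_\h(b)$ in the completed enveloping algebra $\widehat{\ca U}(\h)$, the formula for $\ast$ gives $X\ast Y = X_{(1)}\cdot(X_{(2)}\triangleright Y)$, where $\Delta_{sh}$ is the shuffle coproduct. The first key point is that since $X=\exp_\h(a)$ is a group-like element for $\Delta_{sh}$ (as $a\in\h$ is primitive), one has $\Delta_{sh}(X)=X\ot X$, so that $X\ast Y = X\cdot(X\triangleright Y)$.

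Next I would identify $X\triangleright Y$. Using property \ref{D bial item4}, the extended post-Lie product satisfies $X\triangleright(Y\cdot Y')=(X_{(1)}\triangleright Y)\cdot(X_{(2)}\triangleright Y')$, and combined with \ref{D bial item1} this shows that for group-like $X$ the operator $Y\mapsto X\triangleright Y$ is an algebra endomorphism of $\widehat{\ca U}(\h)$. Moreover, on primitive elements $c\in\h$ one computes $X\triangleright c = \exp_\h(a)\triangleright c = \exp(\upsilon_{\phi^{-1}(a)})(c)$; this is the standard fact that exponentiating a group-like element in the first slot of the (extended) post-Lie product yields the exponential of the corresponding derivation — it follows by expanding $\exp_\h(a)=\sum a^n/n!$ and using \ref{D bial item5} iteratively, or equivalently by noting $\upsilon_a(c)=a\triangleright c$ and that $\triangleright$ makes $\h$ act by derivations. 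Since an algebra endomorphism of $\widehat{\ca U}(\h)$ that agrees with $\exp(\upsilon_{\phi^{-1}(a)})$ on the primitives $\h$ must equal $\ca U(\exp(\upsilon_{\phi^{-1}(a)}))$ on all of $\widehat{\ca U}(\h)$, and since $\exp_\h(b)$ is the exponential of a primitive element, we get $X\triangleright Y = \exp_\h\big(\exp(\upsilon_{\phi^{-1}(a)})(b)\big) = \exp_\h(\exp_\h(a)\triangleright b)$, using the notation $\exp_\h(a)\triangleright b := \exp(\upsilon_{\phi^{-1}(a)})(b)$ consistent with the paper.

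Putting these together, $\exp_\h(a)\ast\exp_\h(b) = \exp_\h(a)\cdot\exp_\h(\exp_\h(a)\triangleright b)$, and applying $\log_\h$ to both sides, the right-hand side is by definition $\operatorname{BCH}_\h(a,\exp_\h(a)\triangleright b)$, which is exactly the claimed identity. The main obstacle I anticipate is the careful justification that $Y\mapsto X\triangleright Y$ for $X$ group-like is precisely $\ca U(\exp(\upsilon_{\phi^{-1}(a)}))$: one must check that the series $\exp(\upsilon_{\phi^{-1}(a)})$ converges as an algebra map in the completed setting and that the functoriality $\ca U$ applied to an automorphism of $\h$ is compatible with the extension of $\triangleright$ — but both are routine once one works in $\PBcomp$ with the appropriate grading, and the algebraic heart is just the group-like/primitive bookkeeping above.
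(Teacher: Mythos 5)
Your overall strategy --- group-likeness of $\exp_\h(a)$ for $\Delta_{sh}$, multiplicativity of $Y\mapsto X\triangleright Y$ for group-like $X$ via \ref{D bial item1} and \ref{D bial item4}, hence $\exp_\h(a)\ast\exp_\h(b)=\exp_\h(a)\cdot\exp_\h\big(\exp_\h(a)\triangleright b\big)$ and the BCH formula after applying $\log_\h$ --- is sound, and it is essentially the Fl{\o}ystad--Munthe-Kaas argument that the paper simply cites (the paper gives no independent proof). However, there is a genuine error in your identification of $\exp_\h(a)\triangleright c$ for primitive $c$. You claim $\exp_\h(a)\triangleright c=\exp(\upsilon_{\phi^{-1}(a)})(c)$ ``by expanding $\exp_\h(a)=\sum a^n/n!$ and using \ref{D bial item5} iteratively,'' but \ref{D bial item5} reads $(x\cdot X)\triangleright y=x\triangleright(X\triangleright y)-(x\triangleright X)\triangleright y$, so already at order two
\begin{equation*}
(a\cdot a)\triangleright c=\upsilon_{\phi^{-1}(a)}^{2}(c)-\upsilon_{\phi^{-1}(a\triangleright a)}(c)\neq \upsilon_{\phi^{-1}(a)}^{2}(c)
\end{equation*}
in general. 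The correct statement is $\exp_\ast(a)\triangleright c=\operatorname{Exp}(\upsilon_{\phi^{-1}(a)})(c)$ (exponential for the Grossman--Larson product, since $(a\ast A)\triangleright c=a\triangleright(A\triangleright c)$); equivalently $\exp_\h(a)\triangleright c=\operatorname{Exp}(\upsilon_{\chi(a)})(c)$ with $\chi$ the post-Lie Magnus expansion. That identity, with the $\chi$ that cannot be omitted, is exactly the content of the lemma that follows Lemma \ref{eq:composition2} in the paper, formula \eqref{eq:identity}; your version would erase the distinction between $\exp_\cdot$ and $\exp_\ast$.

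The good news is that this misidentification is not needed here: in the statement of Lemma \ref{eq:composition2}, $\exp_\h(a)\triangleright b$ denotes the \emph{extended} post-Lie (brace) product of $\exp_\h(a)\in\widehat{\ca{U}}(\h)$ with $b$, not $\exp(\upsilon_{\phi^{-1}(a)})(b)$. Your group-like/primitive bookkeeping already shows that $b\mapsto\exp_\h(a)\triangleright b$ sends (completed) primitives to primitives and that $\exp_\h(a)\triangleright\exp_\h(b)=\exp_\h\big(\exp_\h(a)\triangleright b\big)$, which is all that is required to conclude $a\sharp b=\operatorname{BCH}_\h(a,\exp_\h(a)\triangleright b)$. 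So delete the sentence identifying $\exp_\h(a)\triangleright b$ with $\exp(\upsilon_{\phi^{-1}(a)})(b)$, together with the appeal to $\ca{U}(\exp(\upsilon_{\phi^{-1}(a)}))$, keep the rest, and the proof is correct.
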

\begin{proof}
	This result was proven in \cite{F-MK} and that proof extends without modification to this context.  
\end{proof}
\begin{lem}
	For all $a,b\in\mathfrak h$ one has 
	\begin{equation}
	\exp_\cdot(a)\triangleright b
	=\operatorname{Exp}(\upsilon_{\chi_{(\upsilon, \phi)}(a)})b,\label{eq:identity}
	\end{equation}
	where the right hand side of the previous formula reads as
	\[
	b+\upsilon_{\chi_{(\upsilon, \phi)}(a)}(b)+\frac{1}{2}\upsilon_{\chi_{(\upsilon, \phi)}(a)}\big(\upsilon_{\chi_{(\upsilon, \phi)}(a)}(b)\big)+\frac{1}{3!}\upsilon_{\chi_{(\upsilon, \phi)}(a)}\big(\upsilon_{\chi_{(\upsilon, \phi)}(a)}(\upsilon_{\chi_{(\upsilon, \phi)}(a)}(b))\big)+\cdots
	\]
\end{lem}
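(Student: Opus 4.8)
The plan is to prove the identity $\exp_\cdot(a)\triangleright b = \operatorname{Exp}(\upsilon_{\chi_{(\upsilon, \phi)}(a)})b$ by unwinding the definitions of the extended post-Lie product $\triangleright$ on $\ca{U}(\h)$ (recall that here $\triangleright$ denotes the product $a\ot b\mapsto \upsilon_{\phi^{-1}(a)}(b)$ and its extension via \ref{D bial item1}--\ref{D bial item5}) and of the post-Lie Magnus expansion $\chi_{(\upsilon,\phi)}$ from \eqref{eq:chi}. First I would observe that the right-hand side is, by definition of $\operatorname{Exp}$, the action $M$ stripped of its $\sigma$-part: since $\upsilon_x(y)=M_x(y)-\Theta(x)\cdot y$ and $\chi_{(\upsilon,\phi)} = \log_\g\circ \Theta^{-1}\circ \exp_\h$, one may rewrite $\operatorname{Exp}(\upsilon_{\chi_{(\upsilon,\phi)}(a)})b$ purely in terms of iterated applications of $\upsilon_{\chi_{(\upsilon,\phi)}(a)}$ to $b$, exactly as spelled out in the displayed expansion following the statement.

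The key computation is to show that the left-hand side admits the same expansion. Writing $a' := \chi_{(\upsilon,\phi)}(a)\in\g$, so that $\Theta(\exp_\g(a')) = \exp_\h(a)$ and $\phi(a') $ is the degree-one part of $\exp_\h(a)$ in the appropriate sense, I would use property \ref{D bial item4} together with the fact that $\exp_\h(a)$ is grouplike for $\Delta_{sh}$ to get $\exp_\h(a)\triangleright b = (\exp_\h(a)_{(1)}\triangleright 1)\cdot(\exp_\h(a)_{(2)}\triangleright b)$; since $X\triangleright 1 = 0$ unless $X=1$ by \ref{D bial item1}, and $\exp_\h(a)$ being grouplike means $\Delta_{sh}(\exp_\h(a))=\exp_\h(a)\ot\exp_\h(a)$, one must be slightly careful — the cleaner route is to work with the free post-Lie algebra / functorial description and reduce to checking the identity on $\exp_\g(a')$. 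Concretely, I would show by induction on $n$ that $\phi(a')^{\ast n}$ (the $n$-th power of $\phi(a')$ under the Grossman–Larson product $\ast$) relates to $\upsilon_{a'}^{\circ n}$ acting appropriately, using the recursive structure $(x\cdot X)\triangleright y = x\triangleright(X\triangleright y) - (x\triangleright X)\triangleright y$ from \ref{D bial item5}, and then sum over $n$ weighted by $1/n!$.

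Alternatively — and this is likely the shortest path — I would invoke Lemma~\ref{eq:composition2} and Lemma~\ref{eq:composition2}'s proof strategy: the map $a\mapsto \exp_\h(a)\triangleright b$ is precisely the thing that turns the $\ast$-exponential into the action, and one has $\exp_\h(a) = \Theta(\exp_\g(a'))$ with $\Theta$ an isomorphism of bialgebras from $\ca{U}(\g)$ to $(\ca{U}(\h),\ast)$. Pulling everything back through $\Theta$, the statement becomes the tautology that $M_{\exp_\g(a')}(b) = \exp_\g(a')_{(1)}\cdot(\exp_\g(a')_{(2)}\triangleright b)$ decomposes into its $\sigma$-part and its $\upsilon$-part, with the $\upsilon$-part being exactly $\operatorname{Exp}(\upsilon_{a'})b$ because $\upsilon$ is a Lie morphism into $\Der(\h)$ and hence $\upsilon_{\exp_\g(a')}$, as an associative-algebra element of $\End(\ca{U}(\h))$, acts as $\exp(\upsilon_{a'}) = \operatorname{Exp}(\upsilon_{a'})$.

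The main obstacle I expect is bookkeeping the interaction between the two coproduct-sensitive pieces: the term $\Theta(x)\cdot y = \phi(a')\cdot(\text{stuff})$ contributes the left-multiplication part of $M$, while $\upsilon$ contributes the derivation part, and disentangling which summands of $M_{\exp_\g(a')}(b)$ survive when one strips off all left-multiplications requires a careful grading or filtration argument — essentially showing that the ``pure derivation'' component of $M_{\exp_\g(a')}$ is the operator $\operatorname{Exp}(\upsilon_{a'})$ and nothing else. Once that is isolated, matching it term-by-term with the displayed series is routine; the identity $\exp_\cdot(a)\triangleright b = \operatorname{Exp}(\upsilon_{\chi_{(\upsilon,\phi)}(a)})b$ then follows, and feeds directly into the proof of Theorem~\ref{thm:intpostLie} via Lemma~\ref{eq:composition2}.
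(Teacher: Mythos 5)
Your first concrete plan --- relating the $\ast$-powers of $\phi(\chi_{(\upsilon,\phi)}(a))$ to iterates of $\upsilon_{\chi_{(\upsilon,\phi)}(a)}$ by induction via \ref{D bial item5} and then summing with weights $1/n!$ --- is exactly the paper's proof, which likewise rests on the identification $\exp_\cdot(a)=\exp_\ast\bigl(\phi(\chi_{(\upsilon,\phi)}(a))\bigr)$ furnished by $\Theta$. The one ingredient you should make explicit is the formula $\ell\ast A=\ell\cdot A+\ell\plprod A$ for primitive $\ell\in\h$ (the paper's item (i)); combined with \ref{D bial item5} it yields the telescoping identity $(\ell\ast A)\plprod b=\ell\plprod(A\plprod b)$, from which $\ell^{\ast n}\plprod b=\upsilon_{\phi^{-1}(\ell)}^{\circ n}(b)$ follows immediately by induction on $n$. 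This also dissolves the ``bookkeeping obstacle'' you flag at the end: there is no need to isolate the pure-derivation component of $M_{\exp_{\g}(a')}$ by a grading or filtration argument, so your alternative route via stripping off the $\sigma$-part can simply be dropped.
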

\begin{proof}
	Recall that the extension of the post-Lie product to $\mathcal U(\mathfrak h)$ endowed the latter with a structure of a $D$-bialgebra, see Definition 19 in \cite{MQS}. In this case one has 
	\begin{enumerate}
		\item[(i)] $a\ast A=a\cdot A+a\plprod A$, for all $a\in \h$, see Formula $(4.20)$ pag. 570 in \cite{MQS}; 
		\item[(ii)] ($a\cdot A)\plprod a'=a\plprod (A\plprod a')-(a\plprod A)\plprod a'$, see Formula $(\rm{D}.5)$ pag. 566 in \cite{MQS},
	\end{enumerate}
	for all $a,a'\in \h$ and $A\in\h(V)$. Plugging $(ii)$ into $(i)$ one obtains
	\[
	(a\ast A)\plprod a'=a\plprod (a'\plprod A).
	\]
	The proof of the statement now can be obtained using a simple induction on the length of the monomials in the RHS of \eqref{eq:identity}, applying $(i)$ and $(ii)$ above recalled. 
\end{proof}

After these observations the proof of the Theorem \ref{thm:intpostLie} is formally identical to the one presented in \cite{MQS} and for this reason it is not presented again. 
\\

The functoriality of the construction of the pLMe implies some relations between the different pLMe's that rely on relations between the source objects. 
More precisely, one has the following  relations.  

Let $X=(\g,\h,\upsilon,\phi)$ be an object of $\CMinvfin$. 
Recall the equation \eqref{eq: Theta phi} which, by applying $\text{Int}$, gives 
$\varPhi_{(\upsilon\circ \phi^{-1},id)} = \varPhi_{(\upsilon,\phi)} \circ \text{Int} (\widehat{\ca{U}}(\phi^{-1}))$. 
In other words, one has  
\begin{equation*}
\chi_{(\upsilon,\phi)} =  \phi^{-1} \circ \chi_{(\upsilon\circ \phi^{-1},id)}. 
\end{equation*}

Let $(f,g)\co (\g,\h,\upsilon,\phi) \to (\g',\h',\upsilon',\phi')$ be a morphism in $\CMinvfin$. 
By applying $\text{Int} \circ \widehat{\ca{U}}\circ R$, one obtains 
\begin{equation*}
\varPhi_{(\upsilon \circ \phi^{-1},id)} \circ \text{Int}(\widehat{\ca{U}}(g) )
= \text{Int}( \widehat{\ca{U}}(g)) \circ \varPhi_{(\upsilon' \circ (\phi')^{-1},id)}.  
\end{equation*}
In particular, if $(f,g)$ is an isomorphism one has  
\begin{equation*}
\chi_{(\upsilon' \circ (\phi')^{-1},id)} 
= g^{-1} \circ  \chi_{(\upsilon \circ \phi^{-1},id)} \circ   g.
\end{equation*}

\section{Computing the Post-Lie Magnus expansion}\label{sec: Computing pLMe}

In this section two combinatorial interpretations of the coefficients associated to any forest of the pLMe are given. 
Both interpretations are based on a notion of nested tubings. 
The first method is concerned with \emph{vertical} nested tubings and allows to compute the coefficients associated to any forest recursively. 
The second method is concerned with \emph{horizontal} nested tubings and allows to express these coefficients in a closed form. 
\\

\subsection{Planar trees and forests}\label{sec: Operad of PRT}

\begin{defn} 
	A \emph{planar rooted tree} is an isomorphism class of contractible graphs, embedded in the plane, and endowed with a distinguished vertex, called the \emph{root}, to which is attached an adjacent half-edge, called the \emph{root-edge} of the planar tree. 
\end{defn}

For a planar rooted tree $T$, we let $V(T)$ be the set of all its vertices. On it, we consider two orders: 
\begin{itemize}
	\item  The \emph{level partial} order $\prec$ defined by orienting the edges of $T$ towards the root, except the root-edge. 
	For two  vertices $u$ and $v$ of $ V(T)$, we write $v \prec u$ if there is a string of oriented edges from $v$ to $u$. In particular, the root is  maximal for this partial order. 
	\item The \emph{canonical linear} order $<$: starting from the root-edge of $T$, we run along $T$ in the clockwise direction, passing trough each edge once per direction. The order we meet the vertices for the first time gives the order $<$. In particular the root is the minimal element for $<$. 
\end{itemize}

Pictorially, our trees are drawn with the root at the bottom, and the order on the set of the incoming edges of a vertex is given by the clockwise direction, i.e. from the left to the right.

From now on, when there is no ambiguity, planar rooted trees are simply called trees.  

\begin{example}
	For any two trees $R$ and $S$, let $C(\bullet; R,S)$ be the corolla with an unlabeled vertex $v$ of arity $2$ as root; the roots of $R$ and $S$ are input edges of $v$ in this order. 
\end{example}

\begin{defn}
	Let $T$ be a tree and $v$ a vertex of it. Consider a small disc centered in $v$. The outgoing and incoming edges of $v$ cut the disc into connected components. If $v$ has at least one incoming edge, the \emph{left side of $v$} is the connected components delimited by the outgoing edge and the first incoming edge of $v$. Otherwise, its \emph{left side} is the unique connected component of the cut disc.    
\end{defn}

\begin{example}
A vertex $v$ and its left side (the darkest gray region):
\begin{equation*} 
	\pgfdeclarelayer{background}
	\pgfsetlayers{background,main} 
	\begin{tikzpicture}
	[  
	level distance=0.45cm, 
	level 2/.style={sibling distance=0.6cm}, sibling distance=0.6cm]
	\node (1) [my circle] {} [grow=up]
	{
		child {node (3) [my circle,label={[label distance=-.03cm]0:\tiny{$v$}},label={[label distance=.25cm]180:\tiny{left side}}] {}
			child {node (5) [my circle] {}}
			child {node (6) [my circle] {}} }
	};
	\draw [-] (0,-.25) -- (0,0) ;
	\begin{pgfonlayer}{background}
	\tkzMarkAngle[fill=gray,size=0.31cm,opacity=.8](6,3,1)
	\tkzMarkAngle[fill=gray,size=0.31cm,opacity=.25,draw opacity=.3](5,3,6)
	\tkzMarkAngle[fill=gray,size=0.31cm,opacity=.25,draw opacity=.3](1,3,5)
	\end{pgfonlayer}
	\end{tikzpicture}
	\end{equation*}
\end{example}

\begin{defn}
	A \emph{forest} is a (non commutative) word of trees. 
	For $n\geq 1$, the forest of $n$ times the tree with one vertex is denoted by $\bullet^{\times n}$ and is called \emph{horizontal}.  
\end{defn}

Trees and forests can be grafted at vertices, as follows.

\begin{notation}\label{notation: operations graft}
	\begin{enumerate}
		\item For any two trees $R$ and $T$ we let $R \triangleright_{v} T$ be the tree obtained by grafting the root-edge of $R$ at the vertex $v$, on its left side.  
		\item\label{item: operations grafting/concat} Let $n\geq 1$  and  $n_0+ n_1+...+n_k=n$ be a partition of $n$ such that $n_i\geq 1$ for $1\leq i \leq k$ and $n_{0}\geq 0$. 
		Let $F$ be a forest and let  $v_1,...,v_k$ be $k$ vertices of $F$. 
		For any forest $E$ of $n$ trees, let 
		$E \ltimes_{v_1,...,v_k}^{n_0, n_1,...,n_k} F$ be the forest obtained from $F$ by grafting the first $n_1$ roots of $E$ to the left-side of $v_1$, the next $n_2$ roots of $\bullet^{\times n}$ to the left-side of $v_2$, and so on until $n_k$; the  $n_{0}$ last trees are concatenated to the left of the so-obtained forest.    
		In particular,
		\begin{itemize}
			\item for $k=0$, the operation $\ltimes_{\emptyset}^{n}$ is the concatenation operation that we simply denote by $\times$; 
 			\item for $k=1$ and $n_{0}=0$, the operation $\ltimes_{v}^{0,n}$ is the grafting of all the roots to a single vertex $v$ that we simply denote by $\plprod_v$;  
			\item for $n_{0} = 0$, we write $ \ltimes_{v_1,...,v_k}^{0,n_1,...,n_k} $ as $\plprod_{v_1,...,v_k}^{n_1,...,n_k}$. 
		\end{itemize}
	\end{enumerate}
\end{notation}

For instance, one has 
\begin{equation*}
\begin{tikzpicture}
[  level distance=0.4cm, level 2/.style={sibling distance=0.6cm}, sibling distance=0.6cm,baseline=2.5ex,level 1/.style={level distance=0.35cm},baseline=2.5ex]
\node [] {} [grow=up]
{	
	child {node [my circle]  {}
		child {node [my circle]  {} 
		}	
	}
};
\end{tikzpicture}   
\triangleright_{v} 
\begin{tikzpicture}
[  level distance=0.4cm, level 2/.style={sibling distance=0.6cm}, sibling distance=0.6cm,baseline=2.5ex,level 1/.style={level distance=0.35cm},baseline=2.5ex]
\node [] {} [grow=up]
{	
	child {node [my circle]  {}
		child {node [my circle,label=right:\small{$v$}]  {} 
			child {node [my circle]  {} 
			}
		}
	}
}
;
\end{tikzpicture}
=
\begin{tikzpicture}
[  level distance=0.4cm, level 2/.style={sibling distance=0.6cm}, sibling distance=0.6cm,baseline=2.5ex,level 1/.style={level distance=0.35cm},baseline=2.5ex]
\node [] {} [grow=up]
{	
	child {node [my circle]  {}
		child {node [my circle]  {} 
			child {node [my circle]  {} 	
			}
			child {node [my circle]  {} 	child {node [my circle]  {} }} 
		}	
	}
};
\end{tikzpicture}
\text{ and }~
\begin{tikzpicture}
[level 1/.style={level distance=0cm,sibling distance=.4cm }, level 2/.style={level distance=0.2cm,sibling distance=.6cm }, level 3/.style={sibling distance=0.6cm,level distance=0.35cm}, sibling distance=0.6cm,baseline=1.5ex]
\node [] {} [grow=up]	 
{	child  { edge from parent[draw=none]  child {node [my circle] (A)  {} 	child {node [my circle]  (B) {} 		} 		}}
	child { edge from parent[draw=none] child  {	 {node [my circle,black] (l1) {} 
			}
	}}
};
\end{tikzpicture}  
\plprod_v 
\begin{tikzpicture}
[  level distance=0.4cm, level 2/.style={sibling distance=0.6cm}, sibling distance=0.6cm,baseline=2.5ex,level 1/.style={level distance=0.35cm},baseline=2.5ex]
\node [] {} [grow=up]
{	
	child {node [my circle]  {}
		child {node [my circle,label=right:\small{$v$}]  {} 
		}	
	}
};
\end{tikzpicture}   
=
\begin{tikzpicture}
[  level distance=0.4cm, level 2/.style={sibling distance=0.56cm}, sibling distance=0.6cm,baseline=2.5ex,level 1/.style={level distance=0.35cm},baseline=2.5ex]
\node [] {} [grow=up]
{	
	child {node [my circle]  {}
		child {node [my circle]  {} 
			child {node [my circle]  {} 	child {node [my circle]  {} }	
			}
			child {node [my circle]  {} } 
		}	
	}
};
\end{tikzpicture}
\text{ and }
\left(
\begin{tikzpicture}
[level 1/.style={level distance=0cm,sibling distance=.4cm }, level 2/.style={level distance=0.2cm,sibling distance=.6cm }, level 3/.style={sibling distance=0.4cm,level distance=0.35cm}, sibling distance=0.6cm,baseline=1.5ex]
\node [] {} [grow'=up]	 
{	
	child  { edge from parent[draw=none]  
		child {node [my circle] (A)  {} 	
			child {node [my circle]  (B) {} 		} 		}
	}
	child { edge from parent[draw=none] 
		child  {	 {node [my circle] (T2) {}}	}
	}
	child { edge from parent[draw=none] 
		child  {	 {node [my circle] (T3) {}}	}
	}
	child  { edge from parent[draw=none]  
		child {node [my circle] (T4lev0)  {} 	
			child {node [my circle]  (T4lev1) {}}
			child {node [my circle]  (T4lev12) {}}}
	}
};
\end{tikzpicture}  
\right)
\ltimes_{v_1,v_2}^{1,2,1}
\left(
\begin{tikzpicture}
[level 1/.style={level distance=0cm,sibling distance=.5cm }, level 2/.style={level distance=0.2cm,sibling distance=.6cm }, level 3/.style={sibling distance=0.6cm,level distance=0.35cm}, sibling distance=0.6cm,baseline=1.5ex]
\node [] {} [grow'=up]	 
{	
	child  { edge from parent[draw=none]  
		child {node [my circle,label=left:\scriptsize{$v_1$}] (A)  {} 	
			child {node [my circle]  (B) {} 		} 		}
	}
	child { edge from parent[draw=none] 
		child  {	 {node [my circle,label=above:\scriptsize{$v_2$}] (T2) {}}	}
	}
	child { edge from parent[draw=none] 
		child  {	 {node [my circle] (T3) {}}	}
	}
};
\end{tikzpicture}  
\right)
= 
\begin{tikzpicture}
[level 1/.style={level distance=0cm,sibling distance=.65cm }, level 2/.style={level distance=0.2cm,sibling distance=.6cm }, level 3/.style={sibling distance=0.35cm,level distance=0.35cm}, sibling distance=0.6cm,baseline=1.5ex]
\node [] {} [grow'=up]	 
{	
	child  { edge from parent[draw=none]  
		child {node [my circle] (A)  {} 	
			child {node [my circle]  (B) {} 		} 		}
	}
	child { edge from parent[draw=none] 
		child {	{node [my circle] (T1) {}
				child {node [my circle]  (T1lev11) {}	}
				child {node [my circle]  (T1lev12) {}	}
				child {node [my circle]  (T1lev13) {}	}}}
	}
	child { edge from parent[draw=none] 
		child {	{node [my circle] (T2) {}
				child {node [my circle]  (T2lev1) {}	 	
					child {node [my circle]  (T2lev21) {}	}
					child {node [my circle]  (T2lev22) {}	}}}}
	}
	child { edge from parent[draw=none] 
		child {	{node [my circle] (T3) {}}}
	}
};
\end{tikzpicture}.
\end{equation*}
In the second case one has $k=1$, $n_0=0$ and $n_1=2$; in the last case one has $k=2$, $n_0=1$, $n_1=2$ and $n_3=1$. 
\\

We also will be led to consider trees with labelings, or more in general, with partial labelings. 
\begin{defn} 
	Let $T$ be a tree and let $U$ be a subset of  $V(T)$.  
	A \emph{$U$--label} of $T$ is a bijection $l\co U \rightarrow \{1,...,n\}$. 
	A tree $T$ equipped with a $U$--label is called \emph{partially labeled}. 
\end{defn}

\begin{example}
	Examples of partially labeled trees: \\
	\begin{equation*}
	\begin{tikzpicture}
	[baseline, my circle/.style={draw, fill, circle, minimum size=3pt, inner sep=0pt},  
	level 2/.style={sibling distance=0.5cm,level distance=0.35cm}, sibling distance=0.5cm, level 1/.style={level distance=0.35cm}]
	\node {} [grow=up]
	{child{node [my circle,label=left:\tiny{$3$}] {} child {node (A) [my circle,label=left:\tiny{$2$}]  {}
			} child {node [my circle,label=left:\tiny{$1$}] {}} }};
	\end{tikzpicture}~~
	\begin{tikzpicture}
	[baseline, my circle/.style={draw, fill, circle, minimum size=3pt, inner sep=0pt},  
	level 2/.style={sibling distance=0.5cm,level distance=0.35cm}, sibling distance=0.5cm, level 1/.style={level distance=0.35cm}]	
	\node {} [grow=up]
	{child{node [my circle] {} child {node (A) [my circle,label=left:\tiny{$1$}] {} child {node [my circle,label=left:\tiny{$2$}] {}}}
			child {node  [my circle,label=left:\tiny{$3$}]  {}
	}  }};
	\end{tikzpicture}~~
	\begin{tikzpicture}
	[baseline, my circle/.style={draw, fill, circle, minimum size=3pt, inner sep=0pt},  
	level 2/.style={sibling distance=0.5cm,level distance=0.35cm}, sibling distance=0.5cm, level 1/.style={level distance=0.35cm}]
	\node {} [grow=up]
	{child{node [my circle,label=left:\tiny{$3$}] {} child {node (A) [my circle, label=right:\tiny{$1$}] {} child {node [my circle,label=right:\tiny{$2$}] {}}}
			child {node  [my circle]  {}
	}  }};
	\end{tikzpicture} 
	\end{equation*}
\end{example}

\subsection{Definition of $\TB$}
In this section is reminded the minimal material about the operad $\PSB$; we refer to \cite{MQS} for completeness. 
\\

For $n\geq 1$, let  $\mathcal L(n)$ be the $\mathbb{K}$--vector space generated by the fully labeled trees with $n$ vertices. 
For each $n\geq 2$ let $\mathcal{W} (n)$ be the $\mathbb{K}$--vector space generated by trees $T$ with partial labeling $l\co U\to \{1...,n\}$  that satisfy: 
\begin{enumerate}
	\item[(a)] the root of $T$ is unlabeled; 
	\item[(b)] if a vertex of $T$ is unlabeled, then so is its $\prec$--successor;  
	\item[(c)] each unlabeled vertex of $T$ has exactly two incoming edges. 
\end{enumerate}

Let 
\begin{equation*}
\ca{LW}(1):=\ca{L}(1) \text{ and }  \ca{LW}(n):=\mathcal L(n)\oplus\mathcal W(n) \text{ for } n\geq 2. 
\end{equation*}

In \cite{MQS}, a structure of operad was provided on the collection $\{\ca{LW}(n)\}_n$. One may therefore consider the following ideal  $\mathcal{I}\subset \mathcal{LW}$ generated by 
\[\Biggl \{ \begin{tikzpicture}
[baseline, my circle/.style={draw, fill, circle, minimum size=3pt, inner sep=0pt},  
level 2/.style={sibling distance=0.5cm,level distance=0.4cm}, sibling distance=0.5cm, level 1/.style={level distance=0.4cm}]
\node [my circle]  {} [grow=up]  {child {node (A) [my circle,label=left:\tiny{$2$}]  {}} 
	child {node (B) [my circle,label=left:\tiny{$1$}] {}} };
\draw [-] (0,-.25) -- (0,0) ;

\end{tikzpicture} 
-
\begin{tikzpicture}
[baseline, my circle/.style={draw, fill, circle, minimum size=3pt, inner sep=0pt},  
level 2/.style={sibling distance=0.5cm,level distance=0.4cm}, sibling distance=0.5cm, level 1/.style={level distance=0.4cm}]

\node [my circle]  {} [grow=up] {child {node (A) [my circle,label=left:\tiny{$1$}]  {}
	} child {node  [my circle,label=left:\tiny{$2$}] {}} };
\draw [-] (0,-.25) -- (0,0) ;
\end{tikzpicture} ,
\begin{tikzpicture}
[baseline, my circle/.style={draw, fill, circle, minimum size=3pt, inner sep=0pt},  
level 2/.style={sibling distance=0.5cm,level distance=0.4cm}, sibling distance=0.5cm, level 1/.style={level distance=0.4cm}]

\node [my circle] {} [grow=up] { child {node (A)  [my circle] {}  child {node  [my circle,label=left:\tiny{$3$}]  {}}
		child {node  [my circle,label=left:\tiny{$2$}]  {}
	} } child {node  [my circle,label=left:\tiny{$1$}]  {}
} };
\draw [-] (0,-.25) -- (0,0) ;
\end{tikzpicture} 
-
\begin{tikzpicture}
[baseline, my circle/.style={draw, fill, circle, minimum size=3pt, inner sep=0pt},  
level 2/.style={sibling distance=0.5cm,level distance=0.4cm}, sibling distance=0.5cm, level 1/.style={level distance=0.4cm}]

\node [my circle] {} [grow=up] {child {node (A)  [my circle,label=right:\tiny{$3$}]  {}}
	child {node   [my circle] {}  child {node  [my circle,label=left:\tiny{$2$}]  {}}
		child {node  [my circle,label=left:\tiny{$1$}]  {}
} }  };
\draw [-] (0,-.25) -- (0,0) ;
\end{tikzpicture} 
-
\begin{tikzpicture}
[baseline, my circle/.style={draw, fill, circle, minimum size=3pt, inner sep=0pt},  
level 2/.style={sibling distance=0.5cm,level distance=0.4cm}, sibling distance=0.5cm, level 1/.style={level distance=0.4cm}]

\node [my circle] {}  [grow=up] {child {node (A)  [my circle] {}  child {node  [my circle,label=left:\tiny{$3$}]  {}}
		child {node  [my circle,label=left:\tiny{$1$}]  {}
	} } child {node  [my circle,label=left:\tiny{$2$}]  {}
} };
\draw [-] (0,-.25) -- (0,0) ;
\end{tikzpicture} 
\Biggr \}. \]

For each $n\geq1,$ we let 
$\TB(n):= \mathcal{LW}(n)/ \mathcal{I}(n)$. 
\begin{thm}\label{th: iso}{\cite{MQS}}
	The collection $\{\TB(n)\}_n$ is endowed with a structure of symmetric operad which makes it isomorphic to the operad $\PostLie$. 
\end{thm}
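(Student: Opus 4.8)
The plan is to exhibit $\TB$ as a quotient presenting the operad $\PostLie$ by building an explicit morphism of symmetric operads and then checking it is bijective. Concretely, since $\PostLie$ is the quadratic operad generated by the non-symmetric binary operation $\plprod$ and the antisymmetric binary operation $[-,-]$ modulo the Jacobi relation, \ref{PL Property 1}, and \ref{PL Property 2}, I would define $\Phi\colon \PostLie \to \TB$ on generators by sending $\plprod$ to the class in $\TB(2)$ of the fully labeled tree in $\mathcal{L}(2)$ with root labeled $1$ and single child labeled $2$, and $[-,-]$ to the class of the tree in $\mathcal{W}(2)$ with unlabeled binary root carrying the leaves $1$ and $2$. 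The first step is to verify that these three classes satisfy the defining relations of $\PostLie$ inside $\TB(3)=\mathcal{LW}(3)/\mathcal{I}(3)$. This is a finite computation with planar trees on three vertices using the operad structure of $\mathcal{LW}$ from \cite{MQS}: it is here, and only here, that the specific generators of the ideal $\mathcal{I}$ are used, the point being that they are tailored to impose exactly antisymmetry and Jacobi on the unlabeled vertices while the mixed identities \ref{PL Property 1}--\ref{PL Property 2} follow from the Leibniz-type behaviour of the left graftings $\triangleright_v$ of Notation~\ref{notation: operations graft} together with the rewriting $\llbracket -,-\rrbracket = \plprod - \plprod^{\mathrm{op}} + [-,-]$ of \eqref{eq: h bar }.

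Next I would prove that $\Phi$ is surjective by induction on the arity $n$. An element of $\mathcal{L}(n)$ is a single planar rooted tree with $n$ labeled vertices; peeling off its vertices in the canonical linear order and observing that each left grafting $R\triangleright_v T$ is an operadic substitution of $\Phi(\plprod)$ into strictly smaller trees, one writes any such tree as an iterated composite of copies of $\Phi(\plprod)$. An element of $\mathcal{W}(n)$ decomposes at its topmost unlabeled vertex as a bracket of two sub-expressions supported on fewer labeled vertices, each of which lies in the image of $\Phi$ by the induction hypothesis; hence so does the whole element. Since $\TB=\mathcal{LW}/\mathcal{I}$ is a quotient of the collection spanned by these trees, this already gives surjectivity of $\Phi$ onto $\TB(n)$.

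The remaining and genuinely delicate point is injectivity, equivalently that the surjection $\Phi$ has trivial kernel in each arity. I would establish this by a dimension count. On the $\PostLie$ side, the free post-Lie algebra on a finite set admits the explicit basis of Munthe-Kaas--Lundervold (see \cite{MKL}, and also \cite{EFLMMK}), which exhibits $\PostLie$ as $\Lie \circ \mathcal{M}$ as an $\mathbb{S}$-collection, where $\mathcal{M}$ is the magmatic operad (realised on planar rooted trees via left grafting), and thus pins down $\dim_{\mathbb K}\PostLie(n)$. On the $\TB$ side, $\mathcal{L}(n)$ counts fully labeled planar rooted trees and $\mathcal{W}(n)$ counts planar binary ``Lie skeletons'' with fully labeled planar rooted trees attached at their leaves, so that modulo $\mathcal{I}$ the unlabeled binary part collapses to $\Lie$ applied to that magmatic data, giving $\dim_{\mathbb K}\TB(n)=\dim_{\mathbb K}(\Lie\circ\mathcal{M})(n)$. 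A surjection between finite-dimensional $\mathbb{S}_n$-representations of equal dimension is an isomorphism, and naturality in $n$ promotes this to an isomorphism of operads, proving Theorem~\ref{th: iso}. The main obstacle is precisely this last step: one must show $\mathcal{I}$ is not too large, i.e. that imposing antisymmetry, Jacobi and \ref{PL Property 2} produces no unexpected further collapse of $\mathcal{LW}$. I expect the cleanest rigorous route is to fix a monomial order on (partially labeled) planar trees, orient the generators of $\mathcal{I}$ into a rewriting system, prove confluence, and identify the normal forms with the Munthe-Kaas--Lundervold basis; injectivity, hence the theorem, then follows without appealing to the external dimension formula.
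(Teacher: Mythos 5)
This statement is not proved in the present paper at all: it is quoted verbatim from \cite{MQS}, where the operad structure on $\mathcal{LW}$ and the identification of the quotient $\PSB=\mathcal{LW}/\mathcal{I}$ with $\PostLie$ are carried out. So there is no internal proof to compare against; what can be said is whether your outline would constitute an acceptable substitute. Your overall strategy --- present $\PostLie$ by generators and relations, send $\plprod$ and $[-,-]$ to the two $2$--vertex trees, check the relations in arity $3$, get surjectivity by peeling trees apart, and get injectivity from the known $\mathbb{S}$--module identification $\PostLie\cong\Lie\circ\mathcal{M}ag$ (with $\mathcal{M}ag$ realised on planar rooted trees by grafting at the root) --- is the standard route and is essentially the one taken in \cite{MQS}, which also leans on the Lundervold--Munthe-Kaas description of the free post-Lie algebra.

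The genuine gap is that the one step you yourself flag as delicate is the entire content of the theorem, and it is left as a plan rather than an argument. To conclude from the surjection $\Phi\colon\PostLie(n)\twoheadrightarrow\TB(n)$ that $\Phi$ is injective you need the lower bound $\dim_{\K}\TB(n)\geq\dim_{\K}\PostLie(n)$, i.e.\ that $\mathcal{I}$ does not collapse $\mathcal{LW}$ below the target dimension; asserting that ``the unlabeled binary part collapses to $\Lie$ applied to the magmatic data'' is a restatement of this, not a proof. The proposed fix (orient the generators of $\mathcal{I}$, prove confluence, match normal forms with the Munthe-Kaas--Lundervold basis) is a legitimate way to close it, but until the rewriting system is written down and local confluence of its critical pairs is checked, the proof is incomplete. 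Two smaller points deserve care as well: your labeling of the image of $\plprod$ is reversed relative to the convention of \eqref{eq: braces}, where $x_1\plprod y$ is the tree whose \emph{root} carries the last label; and the first generator of $\mathcal{I}$ imposes anticommutativity of the unlabeled corolla only once one uses the signed $\mathbb{S}_2$--action on round-shape vertices (the sign $\epsilon(\sigma)$ appearing in \eqref{eq: grafting lie to any tree}), a convention your arity--$3$ verification would need to make explicit.
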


Let us make this operadic structure explicit for any two trees $T\in \PSB(m)$ and $R\in \PSB(n)$  that are fully labeled. 
Let $v$ be the vertex of $T$ that is labeled by $i$; let $k$ be the number of its incoming edges. 
For a map $\phi\co \{1,...,k\} \to V(R)$, let $T\circ_i^{\phi} R$ to be the tree obtained by 
substituting the vertex labeled by $i$ by the tree $R$, and then grafting the incoming edges  of $i$ to the labeled vertices of $R$ following the map $\phi$. 
The grafting is required to be performed in such a way that it respects the natural order of each fiber of $\phi$. 
This means that if $\phi(v)^{-1}=\{i_1<i_2<...<i_s\}\subset \{1<...<k\}$, then, in the resulting tree, the incoming edge resulting from the grafting of $i_r$--th incoming edge is the $r$--th incoming edge of $v$. 
The labeling of $T\circ_i^{\phi} R$ is given by classical re-indexation.  

The partial composition of $T$ and $R$ at $i$ is:  
\begin{equation}\label{eq: explicit partial compo partial planar tree}
T\circ_i R = \sum_{\phi} T \circ_i^{\phi} R,
\end{equation} 
where $\phi$ runs through the set of maps from $\{1,...,k\}$ to $V(R)$. 
For instance, one has   
\begin{equation*}
\begin{tikzpicture}
[baseline, my circle/.style={draw, fill, circle, minimum size=3pt, inner sep=0pt}, level distance=0.4cm, 
level 2/.style={sibling distance=0.6cm}, sibling distance=0.6cm,baseline=1.5ex]
\node [my circle,label=left:\tiny{$1$}] {} [grow=up]
{
	child {node [my circle,label=left:\tiny{$3$}]  {}} 
	child {node [my circle,label=left:\tiny{$2$}] {}} 
};
\draw [-] (0,-.25) -- (0,0) ;
\end{tikzpicture}
\circ_1 
\begin{tikzpicture}
[baseline, my circle/.style={draw, fill, circle, minimum size=3pt, inner sep=0pt}, level distance=0.4cm, 
level 2/.style={sibling distance=0.6cm}, sibling distance=0.6cm,baseline=1.5ex]
\node [my circle,label=left:\tiny{$1$}] {} [grow=up]
{
	child {node [my circle,label=left:\tiny{$2$}]  {}} 
};
\draw [-] (0,-.25) -- (0,0) ;
\end{tikzpicture}
=
\begin{tikzpicture}
[baseline, my circle/.style={draw, fill, circle, minimum size=3pt, inner sep=0pt}, level distance=0.4cm, 
level 2/.style={sibling distance=0.6cm}, sibling distance=0.6cm,baseline=1.5ex]
\node [my circle,label=left:\tiny{$1$}] {} [grow=up]
{
	child {node [my circle,label=left:\tiny{$2$}]  {}} 
	child {node [my circle,label=left:\tiny{$4$}] {}} 
	child {node [my circle,label=left:\tiny{$3$}] {}}
};
\draw [-] (0,-.25) -- (0,0) ;
\end{tikzpicture}
+
\begin{tikzpicture}
[baseline, my circle/.style={draw, fill, circle, minimum size=3pt, inner sep=0pt}, level distance=0.4cm, 
level 2/.style={sibling distance=0.6cm}, sibling distance=0.6cm,baseline=1.5ex]
\node [my circle,label=left:\tiny{$1$}] {} [grow=up]
{
	child {node [my circle,label=left:\tiny{$2$}]  {} 
		child {node [my circle,label=left:\tiny{$4$}] {}} 
	}
	child {node [my circle,label=left:\tiny{$3$}] {} }
};
\draw [-] (0,-.25) -- (0,0) ;
\end{tikzpicture}
+
\begin{tikzpicture}
[baseline, my circle/.style={draw, fill, circle, minimum size=3pt, inner sep=0pt}, level distance=0.4cm, 
level 2/.style={sibling distance=0.6cm}, sibling distance=0.6cm,baseline=1.5ex]
\node [my circle,label=left:\tiny{$1$}] {} [grow=up]
{
	child {node [my circle,label=left:\tiny{$2$}]  {} 
		child {node [my circle,label=left:\tiny{$3$}] {}} 
	}
	child {node [my circle,label=left:\tiny{$4$}] {} }
};
\draw [-] (0,-.25) -- (0,0) ;
\end{tikzpicture}
+
\begin{tikzpicture}
[baseline, my circle/.style={draw, fill, circle, minimum size=3pt, inner sep=0pt}, level distance=0.4cm, 
level 2/.style={sibling distance=0.6cm}, sibling distance=0.6cm,baseline=1.5ex]
\node [my circle,label=left:\tiny{$1$}] {} [grow=up]
{
	child {node [my circle,label=left:\tiny{$2$}]  {} 
		child {node [my circle,label=left:\tiny{$4$}] {}} 
		child {node [my circle,label=left:\tiny{$3$}] {}}
	}
};
\draw [-] (0,-.25) -- (0,0) ;
\end{tikzpicture}
.
\end{equation*}

\subsection{The free post-Lie algebra}\label{rem: free postlie}
Given an operad $\mathcal{O}$ and a vector space $V$, we denote by $\mathcal{O}(V)$ the free $\mathcal{O}$--algebra generated by $V$. It is explicitly given by $ \mathcal{O}(V)= \bigoplus_{n\geq 0} \mathcal{O}(n)\ot_{\mathbb{S}_n} V^{\ot n}$. 
By Theorem \ref{th: iso}, we know that $\TB(\K)$ is the free post-Lie algebra on $\K$, which is the vector space generated by trees of $\TB$, with a unique label. In other words, if we let $\K=\K<\bsq>$ for a generator $\bsq$, then $\TB(\K)$ is generated by the set 
\begin{equation*}\label{eq: set gen of free postlie}
\mathcal{G}= \bigg\{
\begin{tikzpicture}
[  level distance=0.5cm, level 2/.style={sibling distance=0.6cm}, sibling distance=0.6cm,baseline=2.5ex,level 1/.style={level distance=0.4cm}]
\node [] {} [grow=up]
{	
	child {node [sq]  {}
	}
};
\end{tikzpicture} 
,
\begin{tikzpicture}
[  level distance=0.5cm, level 2/.style={sibling distance=0.6cm}, sibling distance=0.6cm,baseline=2.5ex,level 1/.style={level distance=0.4cm}]
\node [] {} [grow=up]
{	
	child {node [sq]  {}
		child {node [sq]  {} 
		}
	}
};
\end{tikzpicture} 
,
\begin{tikzpicture}
[  level distance=0.5cm, level 2/.style={sibling distance=0.6cm}, sibling distance=0.6cm,baseline=2.5ex,level 1/.style={level distance=0.4cm}]
\node [] {} [grow=up]
{	
	child {node [sq]  {}
		child {node [sq]  {} }
		child {node [sq]  {} 
		}
	}
};
\end{tikzpicture} 
,
\begin{tikzpicture}
[  level distance=0.5cm, level 2/.style={sibling distance=0.6cm}, sibling distance=0.6cm,baseline=2.5ex,level 1/.style={level distance=0.4cm}]
\node [] {} [grow=up]
{	
	child {node [sq]  {}
		child {node [sq]  {} 
			child {node [sq]  {} 
		}}
	}
};
\end{tikzpicture} 
,
\begin{tikzpicture}
[  level distance=0.5cm, level 2/.style={sibling distance=0.6cm}, sibling distance=0.6cm,baseline=2.5ex,level 1/.style={level distance=0.4cm}]
\node [] {} [grow=up]
{	
	child {node [sq]  {}
		child {node [sq]  {} }
		child {node [sq]  {} 
			child {node [sq]  {} }
		}
	}
};
\end{tikzpicture} 
,
\begin{tikzpicture}
[  level distance=0.5cm, level 2/.style={sibling distance=0.6cm}, sibling distance=0.6cm,baseline=2.5ex,level 1/.style={level distance=0.4cm}]
\node [] {} [grow=up]
{	
	child {node [sq]  {}
		child {node [sq]  {} 
			child {node [sq]  {} } }
		child {node [sq]  {} }
	}
};
\end{tikzpicture} 
,
\begin{tikzpicture}
[  level distance=0.5cm, level 2/.style={sibling distance=0.6cm}, sibling distance=0.6cm,baseline=2.5ex,level 1/.style={level distance=0.4cm}]
\node [] {} [grow=up]
{	
	child {node [my circle bis]  {}
		child {node [sq]  {} }
		child {node [sq]  {} 
			child {node [sq]  {} }
		}
	}
};
\end{tikzpicture} 
,
\begin{tikzpicture}
[  level distance=0.5cm, level 2/.style={sibling distance=0.6cm}, sibling distance=0.6cm,baseline=2.5ex,level 1/.style={level distance=0.4cm}]
\node [] {} [grow=up]
{	
	child {node [sq]  {}
		child {node [sq]  {} }
		child {node [sq]  {} }
		child {node [sq]  {} }
	}
};
\end{tikzpicture} 
, 
\begin{tikzpicture}
[  level distance=0.5cm, level 2/.style={sibling distance=0.6cm}, sibling distance=0.6cm,baseline=2.5ex,level 1/.style={level distance=0.4cm}]
\node [] {} [grow=up]
{	
	child {node [sq]  {}
		child {node [sq]  {} 
			child {node [sq]  {} 
				child {node [sq]  {} 
		}}}
	}
};
\end{tikzpicture} 
,\dots \bigg\}.
\end{equation*}
Let us distinguish the subset $\ca{G}_{\bullet}$ of those classes of trees that have at least one round-shape vertex (\ie the generating set of the Lie elements); let $\ca{G}_{\bsqsmall}$ be its complementary.

The operadic structure of $\TB$ provides both the Lie and the post-Lie product of any two elements. 
Explicitly, the Lie product of two generators $R$ and $S$ is the class of the tree $C(\bullet; R,S)$; their post-Lie product $R\triangleright S$ is as follows. 

\begin{defn}
	For a tree $T$ in $\ca{G}$, let $V_{\bullet}(T)$ and $V_{\bsqsmall}(T)$ be the sets of round-shape and square-shape vertices of $T$, respectively. 
\end{defn}

Suppose $R$ is a tree in $\ca{G}\setminus \ca{G}_{\bullet}$ and $S \in \ca{G}$. 
The post-Lie product of $R$ and $S$ is given by 
\begin{equation} \label{eq: grafting sq tree to any tree}
R\triangleright S = \sum_{v \in V_{\bsqsmall}(S)} R\triangleright_v S.
\end{equation}

Suppose that $R$ is in $\ca{G}_{\bullet}$. 
Recall from \cite[Section 3.3.1]{MQS} that transpositions act on each round-shape vertex of $R$ by switching its two outputs, and that each 
tuple of transpositions $\sigma \in  \mathbb{S}_2^{\times |V_{\bullet}(R)|}$ provides a tree $R_{\sigma}$ by performing such action vertex-wise. 
Recall also that $R$ (and also $R_{\sigma}$) can be \emph{contracted} into a tree $Con(R)$ with only one round-shape vertex (with possibly more than two outputs, so such a tree does not necessarily belong to $\ca{G}$); it is obtained by contracting all the edges between round-shape vertices. 
Given two vertices $v_1$ and $v_2$ of a tree $T$, we let $Con_{(v_1,v_2)}(T)$ be the tree obtained from $T$ by contracting the edge between $v_1$ and $v_2$; the resulting vertex inherits of the shape of $v_2$.  
One has
\begin{equation}\label{eq: grafting lie to any tree}
R\plprod S = \sum_{v \in V_{\bsqsmall}(S)} \sum_{\sigma\in \mathbb{S}_2^{\times |V_{\bullet}(R)|}} \epsilon(\sigma) Con_{(r,v)}(Con(R_{\sigma}) \plprod_v S),
\end{equation}
where $r$ is the root-vertex of $R$ and the sign $\epsilon(\sigma)$ is the product $sgn(\sigma_1)\cdots sgn(\sigma_k)$ for $\sigma=(\sigma_1,...,\sigma_k)$. 

Let us interpret $Con_{(r,v)}(Con(R_{\sigma}) \plprod_v S)$ in terms of grating of forests:  
If $R$ has $k$ round-shape vertices, it corresponds to a $k$--bracketing of trees $T_1,...,T_k$ in $\ca{G}_{\bsqsmall}$, so that one has 
\begin{equation}\label{eq: relation contraction and grafting forests}
Con_{(r,v)}(Con(R_{\sigma}) \plprod_v S) = (T_{\sigma(1)}T_{\sigma(2)}\cdots T_{\sigma(k)}) \plprod_v S.
\end{equation}

\subsection{The universal enveloping algebra of the free post-Lie algebra}\label{sec: univ env alg}

\begin{defn}
Let $n,k\geq 1$ and $q_1+...+q_k=n$ be a partition of $n$ by positive integers $q_i\geq 0$. 
A \emph{$(q_1,...,q_k)$--shuffle} is a partition of $\{1<\cdots <n\}$ by $k$ ordered sets of cardinal $q_i$ for each $1\leq i\leq k$.  
\end{defn}
The number of $(q_1,...,q_k)$--shuffles is $\shuff_{q_1,...,q_k}:=\frac{(q_1+...+q_k)!}{q_1!\cdots q_k!}$. 
\\
Recall that the universal enveloping  algebra of a post-Lie algebra (and in fact of any Lie algebra) is equipped with the shuffle coproduct
$\Delta_{sh}\co \mathcal{U}(\g) \to \mathcal{U}(\g)^{\ot 2}$ that makes it a bialgebra with respect to its classical product. 
Lie elements are primitive for $\Delta_{sh}$, that is one has  $\Delta_{sh}(l)  = l\ot 1 + 1\ot l$ for all $l\in \g$. 
Therefore, for any Lie element $l$ and $k\geq 2$, one has  
\begin{equation}\label{eq: shuffle copro on lie}
\Delta_{sh}^{(k)} (l)  =  \sum_{i_1+...+i_k=i} \shuff_{i_1,...,i_k} l^{i_1} \ot \cdots \ot l^{ i_k}.   
\end{equation}

Recall from Remark \ref{rmk: GL product and D-alg}, that 
the Grossman-Larson product on  $(\mathcal{U}(\g),\Delta_{sh})$ is given by $\ast\co X\ot Y \mapsto  X_{(1)}(X_{(2)}\plprod Y)$ for all $X,Y\in \ca{U}(\g)$. 
Recall also that here $\plprod \co \mathcal{U}(\g)^{\ot 2} \to \mathcal{U}(\g)$ is the extension of the post-Lie product and it satisfies the  properties \ref{D bial item1}, \ref{D bial item4} and \ref{D bial item5}.  
The left-side extension of the post-Lie product was identified in \cite{MQS} as  \emph{post-symmetric braces}, which are operations encoded by the corollas in $\PSB$. 
This means that, for $X=x_1\cdots  x_n \in \mathcal{U}(\g)$ and $y\in \g$, one has 
\begin{equation}\label{eq: braces}
X\plprod y = 
\begin{tikzpicture}
[  level distance=0.5cm, level 2/.style={sibling distance=0.6cm}, sibling distance=0.6cm,baseline=2.5ex,level 1/.style={level distance=0.4cm}]
\node [] {} [grow'=up]
{	
	child {node [my circle, label=right:\small{$n+1$}]  {}
		child {node [my circle,label=above:\small{$1$}]  {}} 
		child {node [my circle,label=above:\small{$2$}]  {}} 
		child {node [label=above:\small{$\dots$}]        {}} 
		child {node [my circle,label=above:\small{$n$}]  {}} 
	}
};
\end{tikzpicture}  
\left( x_1 \ot \cdots \ot x_n \ot y \right). 
\end{equation}

The rest of this section is dedicated to the universal enveloping algebra of the free post-Lie algebra $\TB(\K)$. 
Remark that, since the product of the free associative algebra on $\TB(\K)$ is given by concatenation of trees, its underlying vector space is generated by the forests of $\mathcal{G}$. 
The universal enveloping algebra   $(\mathcal{U}(\TB (\K)), \ast)$ is the vector space generated by the forests on $\mathcal{G}$,  modded out by the ideal generated by $RS-SR- C(\bullet; R,S)$ for every  $R$ and  $S$ in $\mathcal{G}$.  
For later use, let us investigate the product $E\ast F$ for a few particular forests $E$ and $F$.

\begin{lem}\label{lem: grafting of Lie element}
	For any Lie element $l$ and any forests $F$, one has $l \ast F = l F + l\plprod F $. 
\end{lem}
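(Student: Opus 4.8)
The plan is to unwind the definition of the Grossman-Larson product $\ast$ on a Lie element, using the coproduct formula \eqref{eq: shuffle copro on lie} together with the bialgebra property \ref{D bial item4} of the extended post-Lie product and the normalization \ref{D bial item1}. Recall that for $X,Y \in \ca{U}(\g)$ one has $X\ast Y = X_{(1)}(X_{(2)}\plprod Y)$. Taking $X=l$ a Lie element, primitivity gives $\Delta_{sh}(l) = l\ot 1 + 1\ot l$, so the sum defining $l\ast F$ has exactly two terms: the one coming from the splitting $l_{(1)}=l$, $l_{(2)}=1$, and the one from $l_{(1)}=1$, $l_{(2)}=l$.

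For the first term, I would invoke $l\plprod 1 = 0$ — hold on, that is not quite right: the relevant factor is $l_{(2)}\plprod F = 1\plprod F = F$ by the first half of \ref{D bial item1}, so this term contributes $l_{(1)}\cdot F = l\cdot F = lF$. For the second term, $l_{(1)} = 1$ so the factor $l_{(1)}\cdot(\text{--})$ is just multiplication by the unit, and we are left with $l_{(2)}\plprod F = l\plprod F$. Summing the two contributions yields $l\ast F = lF + l\plprod F$, which is exactly the claimed identity. Since both sides are linear in $F$, it suffices to have checked it for $F$ a forest on $\ca{G}$, as done.

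The only point requiring a little care — and the one I would expect to be the main obstacle — is making sure the Sweedler conventions and the handedness of the brace/left-extension match up with the definition of $\ast$ in Remark \ref{rmk: GL product and D-alg}; in particular one must confirm that it is the \emph{first} tensor factor $X_{(1)}$ that multiplies on the left and the \emph{second} factor that acts via $\plprod$, and that $1\plprod F = F$ rather than $F\plprod 1 = 0$ is the relevant instance of \ref{D bial item1}. Once the bookkeeping is fixed, the computation is a one-line consequence of primitivity of $l$. A fully spelled-out version would simply read
\[
l\ast F = l_{(1)}(l_{(2)}\plprod F) = l\,(1\plprod F) + 1\,(l\plprod F) = lF + l\plprod F,
\]
using $\Delta_{sh}(l)=l\ot 1 + 1\ot l$ and \ref{D bial item1}.
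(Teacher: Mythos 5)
Your proof is correct and follows exactly the paper's argument: the paper likewise just invokes primitivity of Lie elements for $\Delta_{sh}$ and the definition $l\ast F = l_{(1)}\cdot(l_{(2)}\plprod F)$, with the two Sweedler terms giving $lF$ (via $1\plprod F=F$ from \ref{D bial item1}) and $l\plprod F$. Nothing further is needed.
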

\begin{proof}
	Recall that Lie elements are primitive elements for the shuffle coproduct.  
	We conclude by observing that $l\ast F :=l_{(1)}\cdot (l_{(2)}\plprod F)$.  
\end{proof}

\begin{lem}\label{lem: calc 2 F=T}
	For $n\geq 1$ and $T$ a tree, one has 
	\begin{equation*}
	\bsq^{\times n} \plprod T = 
	\sum_{1\leq k \leq |T|}
	\sum_{ \substack{n_1+...+n_k=n,~ n_i>0 \\ \{v_1,...,v_k\},~ v_i\in T, v_i\neq v_j} } 
	\shuff_{n_1,...,n_k}~ 
	\bsq^{\times n} \plprod_{v_1,...,v_k}^{n_1,...,n_k} T. 
	\end{equation*}
\end{lem}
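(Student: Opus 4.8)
The plan is to unwind the definition of the extended post-Lie product $\plprod\co \ca{U}(\TB(\K))^{\ot 2}\to \ca{U}(\TB(\K))$ on the monomial $\bsq^{\times n}$, using the ``post-symmetric braces'' description from \eqref{eq: braces}. By that formula, $\bsq^{\times n}\plprod T$ is obtained by applying the $(n+1)$-ary corolla operation of $\PSB$ to $\bsq^{\ot n}\ot T$; operadically, this is the sum over all ways of grafting the $n$ copies of $\bsq$ onto the vertices of $T$, compatibly with the operad structure of $\PSB$ made explicit around \eqref{eq: explicit partial compo partial planar tree}. So the first step is to identify this operadic substitution with a sum of the forest-grafting operations $\ltimes$ of Notation \ref{notation: operations graft}: grafting $n$ roots to the left sides of the vertices of $T$.

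The second step is the bookkeeping of how many grafting configurations give the same tree. A configuration is a function from the $n$ ordered inputs of the corolla to $V(T)$; two configurations produce the same planar tree exactly when, for each vertex $v$ of $T$, the \emph{multiset} of inputs landing on $v$ is the same after re-ordering — because the $n$ copies of the generator $\bsq$ are indistinguishable and, once a set of inputs is assigned to a single vertex $v$, the operadic grafting places them in a fixed (left-to-right) order on the left side of $v$. Thus summing over configurations and collecting equal terms: choosing which vertices $v_1,\dots,v_k$ of $T$ receive a nonempty block (with $v_i\neq v_j$), choosing the block sizes $n_1,\dots,n_k$ with $n_1+\cdots+n_k=n$ and $n_i>0$, and then counting the number of ordered set-partitions of $\{1<\cdots<n\}$ into blocks of those sizes assigned to $v_1,\dots,v_k$ in order, which is precisely the shuffle number $\shuff_{n_1,\dots,n_k}$. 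The range $1\leq k\leq |T|$ is forced since the $v_i$ are distinct vertices of $T$. Assembling these counts yields exactly the claimed formula, where $\bsq^{\times n}\plprod_{v_1,\dots,v_k}^{n_1,\dots,n_k}T$ denotes the single tree obtained by grafting $n_1$ copies of $\bsq$ to the left side of $v_1$, then $n_2$ to $v_2$, and so on, in the sense of Notation \ref{notation: operations graft}.

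The main obstacle I anticipate is not conceptual but a matter of care in the combinatorics of \emph{order}: one must check that, within a block of inputs assigned to a single vertex $v$, the operadic grafting of $\PSB$ indeed produces one well-defined tree (so no extra internal shuffle factor appears there), while the factor $\shuff_{n_1,\dots,n_k}$ counts only the distribution of the globally ordered index set $\{1<\cdots<n\}$ across the chosen vertices. In other words, one must verify that the partial-composition rule \eqref{eq: explicit partial compo partial planar tree}, applied iteratively to graft $n$ leaves onto $T$ via the corolla, collapses the naive sum over all maps $\{1,\dots,n\}\to V(T)$ into the stated sum with multiplicities $\shuff_{n_1,\dots,n_k}$ — i.e. that the inputs landing on a common vertex contribute a single planar arrangement, not $n_i!$ of them. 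Once this is pinned down, the rest is immediate. A clean way to organize it is by induction on $n$: peel off the last generator $\bsq$, use property \ref{D bial item5} (equivalently, the brace identity) together with the induction hypothesis, and observe that adding one more leaf to a configuration with blocks $(n_1,\dots,n_k)$ either enlarges an existing block or starts a new one, which matches the Pascal-type recursion $\shuff_{n_1,\dots,n_i+1,\dots,n_k}=\sum(\dots)$ satisfied by the shuffle numbers.
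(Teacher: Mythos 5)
Your proposal is correct and follows essentially the same route as the paper: the authors likewise unwind the brace formula \eqref{eq: braces} as an iterated operadic composition (first $\circ_{n+1}$ with $T$, then plugging $\bsq$ into the remaining slots), and count the maps $\phi\co\{1,\dots,n\}\to V(T)$ with prescribed fiber sizes $n_1,\dots,n_k$ over distinct vertices, which gives the multiplicity $\shuff_{n_1,\dots,n_k}$. The order-within-a-fiber point you flag is indeed the relevant subtlety, and it is settled exactly as you suggest by the convention in \eqref{eq: explicit partial compo partial planar tree} that grafting respects the natural order of each fiber, so each map $\phi$ yields a single well-defined planar tree.
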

\begin{proof}
	Recall from \eqref{eq: braces} that $\bsq^{\times n} \plprod T$ is given by 
	\begin{equation*}
	\Big( \Big( \cdots \Big(
	\begin{tikzpicture}
	[  level distance=0.5cm, level 2/.style={sibling distance=0.6cm}, sibling distance=0.6cm,baseline=2.5ex,level 1/.style={level distance=0.4cm}]
	\node [] {} [grow'=up]
	{	
		child {node [my circle, label=right:\small{$n+1$}]  {}
			child {node [my circle,label=above:\small{$1$}]  {}} 
			child {node [my circle,label=above:\small{$2$}]  {}} 
			child {node [label=above:\small{$\dots$}]        {}} 
			child {node [my circle,label=above:\small{$n$}]  {}} 
		}
	};
	\end{tikzpicture}  
	\circ_{n+1} 
	T
	\Big)
	\circ_{1} 
	\bsq
	\Big) \circ_2 \cdots \Big)
	\circ_n \bsq.
	\end{equation*}
	From the operadic structure of $\PSB$, see \eqref{eq: explicit partial compo partial planar tree}, we know that this is the sum of $\bsq^{\times n} \plprod_{v_1,...,v_k}^{n_1,...,n_k} T$ over all distinct vertices $v_1,...,v_k$ and all the maps $\phi\co \{1,...,n\}\to \{1,...,k\}$ such that $|\phi(i)^{-1}|=n_i$ for $1\leq i\leq k$.   
\end{proof}

\begin{lem}\label{lem: calc 1 F=TA...Tk}
	Let $F=T_1\cdots T_k$ be a forest of $k$ trees and let $n\geq 1$. 
	One has 
	\begin{equation*}
	\bsq^{\times n} \ast F 
	= \sum_{j_0+...j_{k}=n, ~j_i\geq 0} 
	\shuff_{j_0,...,j_{k} }
	\bsq^{\times j_0} (\bsq^{\times j_1} \plprod T_1 )  \cdots (\bsq^{\times j_k} \plprod T_k).
	\end{equation*}
\end{lem}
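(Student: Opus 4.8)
The plan is to reduce the whole statement to a single computation, that of the iterated shuffle coproduct of $\bsq^{\times n}$, and then feed the result directly into the definition of the Grossman--Larson product together with the multiplicativity property \ref{D bial item4} of the extended post-Lie product.

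First I would record that the one-vertex tree $\bsq$, being an element of the post-Lie algebra $\TB(\K)$, is primitive for $\Delta_{sh}$, i.e. $\Delta_{sh}(\bsq)=\bsq\ot 1+1\ot\bsq$. Since $\Delta_{sh}$ is a morphism of algebras for the concatenation product, so is its $(k{+}1)$-fold iterate $\Delta_{sh}^{(k+1)}$, and therefore $\Delta_{sh}^{(k+1)}(\bsq^{\times n})$ equals the $n$-th concatenation power of $\sum_{i=0}^{k}1^{\ot i}\ot\bsq\ot 1^{\ot(k-i)}$. As the $k+1$ summands commute pairwise, the multinomial theorem gives
\[
\Delta_{sh}^{(k+1)}(\bsq^{\times n})=\sum_{j_0+\cdots+j_k=n}\shuff_{j_0,\ldots,j_k}\;\bsq^{\times j_0}\ot\bsq^{\times j_1}\ot\cdots\ot\bsq^{\times j_k},
\]
the multinomial coefficient $\tfrac{n!}{j_0!\cdots j_k!}$ being exactly the shuffle number $\shuff_{j_0,\ldots,j_k}$.

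Next I would unwind the left-hand side. By the definition of the Grossman--Larson product recalled in Remark \ref{rmk: GL product and D-alg}, one has $\bsq^{\times n}\ast F=(\bsq^{\times n})_{(1)}\cdot\big((\bsq^{\times n})_{(2)}\plprod F\big)$; and iterating property \ref{D bial item4} together with coassociativity, one gets $X\plprod(T_1\cdots T_k)=(X_{(1)}\plprod T_1)(X_{(2)}\plprod T_2)\cdots(X_{(k)}\plprod T_k)$ for any $X$, where a factor of the coproduct equal to $1$ is absorbed using $1\plprod T_i=T_i$ from property \ref{D bial item1}. Substituting $X=\bsq^{\times n}$ and the formula for $\Delta_{sh}^{(k+1)}(\bsq^{\times n})$ obtained above yields precisely
\[
\bsq^{\times n}\ast F=\sum_{j_0+\cdots+j_k=n}\shuff_{j_0,\ldots,j_k}\;\bsq^{\times j_0}(\bsq^{\times j_1}\plprod T_1)\cdots(\bsq^{\times j_k}\plprod T_k),
\]
which is the claim.

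The only point demanding care — and the sole place where a reader could slip — is the bookkeeping of the coproduct: one must use exactly a $(k{+}1)$-fold coproduct, allotting one tensor slot to the left concatenation factor produced by the Grossman--Larson product and one slot to each of the $k$ trees $T_1,\ldots,T_k$, and then check that expanding $(\bsq\ot 1+\cdots+1\ot\bsq)^{\times n}$ reproduces the shuffle numbers $\shuff_{j_0,\ldots,j_k}$ (equivalently, that the partial coefficients multiply: $\shuff_{j_0,\,j_1+\cdots+j_k}\cdot\shuff_{j_1,\ldots,j_k}=\shuff_{j_0,\ldots,j_k}$). Beyond this purely combinatorial step there is no genuine obstacle; the remainder is a direct substitution.
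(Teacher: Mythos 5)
Your proof is correct and follows essentially the same route as the paper: primitivity of $\bsq$ for $\Delta_{sh}$, the resulting multinomial/shuffle coefficients for iterated coproducts of $\bsq^{\times n}$, the definition of the Grossman--Larson product, and property \ref{D bial item4}. The only (cosmetic) difference is that you compute one $(k{+}1)$-fold coproduct at once, whereas the paper first splits off the concatenation factor $\bsq^{\times j_0}$ with a binary coproduct and then applies the $k$-fold coproduct to the rest; the identity $\shuff_{j_0,\,j_1+\cdots+j_k}\cdot\shuff_{j_1,\ldots,j_k}=\shuff_{j_0,\ldots,j_k}$ you flag is exactly what reconciles the two bookkeepings.
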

\begin{proof}
	By \eqref{eq: shuffle copro on lie}, one has 
	\begin{equation*}
	\bsq^{\times n} \ast F = \sum_{j_0+j_1=n, ~j_i\geq 0} \shuff_{j_0,j_1} \bsq^{\times j_0}( \bsq^{\times j_1} \plprod F),
	\end{equation*}
	and in turn, by \ref{D bial item4} and \eqref{eq: shuffle copro on lie},   one has 
	\begin{equation*}
	\bsq^{\times i} \plprod F 
	= \bsq^{\times i} \plprod (T_1\cdots T_k) 
	= \sum_{i_1+...+i_k=i} \shuff_{i_1,...,i_k} (\bsq^{\times i_1} \plprod T_1 )  \cdots (\bsq^{\times i_k} \plprod T_k ).
	\end{equation*}	
\end{proof}

\subsection{Nested tubings}
In this subsection are presented two notions of nested tubings of forests, the \emph{vertical} nested tubings and the \emph{horizontal} ones. 
The term \emph{tubing} is borrowed from \cite{CarrDevadoss} though the present definition differs from the original one. 
\\

Recall the level partial order $\prec$ and the canonical linear order $<$ of Section \ref{sec: Operad of PRT}, given for trees. 
For a vertex $v$ of a tree $T$, let $\mathfrak{b}_v \subset V(T)$ be the subset of the $\prec$--predecessors of $v$; it inherits of the order $<$.  
The set of roots $\text{Root}(F)$ of a forest $F$ has a \emph{horizontal} order $<_h$ that is increasing as one goes from left to right: for a forest $ST$, one has $v<_h w$ for $v$ the root of $S$ and $w$ the root of $T$. 
 
Recall that $\mathcal{G}_{\bsqsmall}$ be is the subset of $\mathcal{G}$ of those trees that have only square-shape vertices $\bsq$. 
Let $\Forest$ be the set of the forests on $\mathcal{G}_{\bsqsmall}$; it admits a decomposition into the subsets $\Forest_n$ of those forests that have exactly $n$ vertices.  
Let $\Forest_n'$ be the set $\Forest_n \setminus \{\bsq^{\times n}\}$, where $\bsq^{\times n}$ denotes the {horizontal} forest of $n$ trees.  
\begin{defn}
	A \emph{higher set} of a poset $(\mathcal{P},<)$ is a subset of $\mathcal{P}$ that contains the $<$--successors of each of its elements.  
\end{defn}
\begin{defn}\label{de: tube}
	A  \emph{tube} of a tree $T$ is a connected higher set $t$ of $(V(T),\prec)$,  such that, for each $v\in t$, one has $t\cap \mathfrak{b}_v$ is a higher set  of $(\mathfrak{b}_v,<)$. 
\end{defn}
\begin{defn}
	A  \emph{tube} of a forest $F\in \Forest$ is a subset of $V(F)$ such that its intersection with $(\text{Root}(F),<_h)$ is a higher set and such  that it intersects each tree of $F$ into a (possibly empty) tube. 
\end{defn}

\begin{rem}
	A tube of a forest $F$ can be identify to a sub forest of $F$; we will often use this identification implicitly.  
\end{rem}

\begin{defn}\label{de: pre tubing}
	A  \emph{nested tubing} of $F\in \Forest$ is a collection of non empty tubes of $F$ that are pairwise nested and such that: 
	 	\begin{enumerate}
	 	\item it contains at least two tubes; \label{item: at least 2 tubes} 
	 	\item it contains the maximal tube (the tube that is the whole set of the vertices of the forest); \label{item: max tube}
	 \end{enumerate}
	For a nested tubing $t=\{t_i\}_{i\in I}$ of $F$, the \emph{boundary} of the tube $t_i$ is $\partial t_i= t_i \setminus \{ t_j\varsubsetneq t_i\}$. 
\end{defn}

\subsubsection{Vertical nested tubings}

\begin{defn}\label{de: tubing}
	A \emph{vertical} nested tubing is a nested tubing such that: 
	\begin{enumerate}
		\item\label{item: horizontal tubes}  the boundary of each tube is \emph{not} a horizontal forest of more than one tree; and,   
		\item\label{item: forest conneccted to one vertex} if the boundary of a tube is a forest, then either all the roots of this forest are connected to a single vertex of a sub tube, or none of the roots are connected to any sub tube. 
	\end{enumerate}
\end{defn}

\begin{defn}
For  $F$ in $\Forest'$, let $Tub(F)$ be the set of its vertical nested tubings.  
\end{defn}

\begin{example}\label{ex: tubings and not}
	a) is a tube, b) is a higher set that is not a tube (condition on $\mathfrak{b}_v$ unsatisfied), c), d) and e) are not vertical nested tubings (condition \ref{item: horizontal tubes} is not satisfied; in addition for d), condition \ref{item: forest conneccted to one vertex} is not satisfied either).  
	The three last examples f), g) and h) are vertical nested tubings. 
	\begin{equation*}
	\pgfdeclarelayer{foreground} 
	\pgfdeclarelayer{fforeground} 
	\pgfdeclarelayer{background}
	\pgfdeclarelayer{bbackground}
	\pgfsetlayers{bbackground,background,main,foreground,fforeground}
	a) 
	\begin{tikzpicture}
	[level 1/.style={level distance=0cm,sibling distance=1.0cm }, level 2/.style={level distance=0.3cm,sibling distance=1.0cm }, level 3/.style={sibling distance=0.6cm,level distance=0.5cm}, sibling distance=0.6cm,baseline=2.5ex]
	\begin{pgfonlayer}{fforeground} 
	\node [] {} [grow=up]	 
	{	child  { edge from parent[draw=none]  child {node [sq] (A)  {} 	child {node [sq]  (B) {} 		} 		child {node [sq] (C) {}} 	 	}}
	};
	\end{pgfonlayer}
	\begin{pgfonlayer}{foreground}
	\draw [line width=10pt,opacity=1,black!50,line cap=round,rounded corners]  (A.center) -- (B.center);
	\end{pgfonlayer}
	\end{tikzpicture}
	~~~b) 
	\begin{tikzpicture}
	[level 1/.style={level distance=0cm,sibling distance=1.0cm }, level 2/.style={level distance=0.3cm,sibling distance=1.0cm }, level 3/.style={sibling distance=0.6cm,level distance=0.5cm}, sibling distance=0.6cm,baseline=2.5ex]
	\begin{pgfonlayer}{fforeground} 
	\node [] {} [grow=up]	 
	{	child  { edge from parent[draw=none]  child {node [sq] (A)  {} 	child {node [sq]  (B) {} 		} 		child {node [sq] (C) {}} 	 	}}
	};
	\end{pgfonlayer}
	\begin{pgfonlayer}{foreground}
	\draw [line width=10pt,opacity=1,black!50,line cap=round,rounded corners]  (A.center) -- (C.center);
	\end{pgfonlayer}
	\end{tikzpicture}
	~~~c) 
	\begin{tikzpicture}
	[level 1/.style={level distance=0cm,sibling distance=1.0cm }, level 2/.style={level distance=0.3cm,sibling distance=1.0cm }, level 3/.style={sibling distance=0.6cm,level distance=0.5cm}, sibling distance=0.6cm,baseline=2.5ex]
	\begin{pgfonlayer}{fforeground} 
	\node [] {} [grow=up]	 
	{	child  { edge from parent[draw=none]  child {node [sq] (A)  {} 	child {node [sq]  (B) {} 		} 		child {node [sq] (C) {}} 	 	}}
	};
	\end{pgfonlayer}
	\begin{pgfonlayer}{foreground}
	\draw [fill=black!50, draw=black!50, line cap=round,rounded corners] (A) circle (0.2);
	\end{pgfonlayer}
	\begin{pgfonlayer}{bbackground}
	\draw [line width=20pt,opacity=1,draw=black!25,fill=black!25,line cap=round,rounded corners] (A.center) -- (B.center) -- (C.center) --cycle;
	\end{pgfonlayer}
	\end{tikzpicture}
	~~~d) 
	\begin{tikzpicture}
	[level 1/.style={level distance=0cm,sibling distance=.5cm }, level 2/.style={level distance=0.3cm,sibling distance=1.0cm }, level 3/.style={sibling distance=0.6cm,level distance=0.5cm}, sibling distance=0.6cm,baseline=2.5ex]
	\begin{pgfonlayer}{fforeground} 
	\node [] {} [grow=up]	 
	{	child  { edge from parent[draw=none]  child {node [sq] (A)  {} 	child {node [sq]  (B) {} 		} 		}}
		child { edge from parent[draw=none] child  {	 {node [sq,black] (l1) {} 
				}
		}}
	};
	\end{pgfonlayer}
	\begin{pgfonlayer}{foreground}
	\draw [fill=black!50, draw=black!50, line cap=round,rounded corners] (A) circle (0.18);
	\end{pgfonlayer}
	\begin{pgfonlayer}{bbackground}
	\draw [line width=17pt,opacity=1,draw=black!25,fill=black!25,line cap=round,rounded corners] (B.center) -- (A.center)--(l1.center) --cycle;
	\end{pgfonlayer}
	\end{tikzpicture}
	~~~e)
	\begin{tikzpicture}
	[level 1/.style={level distance=0cm,sibling distance=.5cm }, level 2/.style={level distance=0.3cm,sibling distance=1.0cm }, level 3/.style={sibling distance=0.6cm,level distance=0.5cm}, sibling distance=0.6cm,baseline=2.5ex]
	\begin{pgfonlayer}{fforeground} 
	\node [] {} [grow=up]	 
	{	child  { edge from parent[draw=none]  child {node [sq] (A)  {} 	child {node [sq]  (B) {} 		} 		}}
		child { edge from parent[draw=none] child  {	 {node [sq,black] (l1) {} 
				}
		}}
	};
	\end{pgfonlayer}
	\begin{pgfonlayer}{main}
	\draw [line width=10pt,black!50, line cap=round,rounded corners] (l1.center)  -- (A.center);
	\end{pgfonlayer}
	\begin{pgfonlayer}{bbackground}
	\draw [line width=17pt,opacity=1,draw=black!25,fill=black!25,line cap=round,rounded corners] (A.center) -- (l1.center)--(B.center) --cycle;
	\end{pgfonlayer}
	\end{tikzpicture}
	~~~f)
	\begin{tikzpicture}
	[level 1/.style={level distance=0cm,sibling distance=.5cm }, level 2/.style={level distance=0.3cm,sibling distance=1.0cm }, level 3/.style={sibling distance=0.6cm,level distance=0.5cm}, sibling distance=0.6cm,baseline=2.5ex]
	\begin{pgfonlayer}{fforeground} 
	\node [] {} [grow=up]	 
	{	child  { edge from parent[draw=none]  child {node [sq] (A)  {} 	child {node [sq]  (B) {} 		} 		}}
		child { edge from parent[draw=none] child  {	 {node [sq,black] (l1) {} 
				}
		}}
	};
	\end{pgfonlayer}
	\begin{pgfonlayer}{foreground}
	\draw [fill=black!10, draw=black!10, line cap=round,rounded corners] (A) circle (0.15);
	\end{pgfonlayer}
	\begin{pgfonlayer}{main}
	\draw [line width=12pt,black!50, line cap=round,rounded corners] (l1.center)  -- (A.center);
	\end{pgfonlayer}
	\begin{pgfonlayer}{bbackground}
	\draw [line width=17pt,opacity=1,draw=black!25,fill=black!25,line cap=round,rounded corners] (B.center) -- (A.center)--(l1.center) --cycle;
	\end{pgfonlayer}
	\end{tikzpicture}
	~~~g)
	\begin{tikzpicture}
	[level 1/.style={level distance=0cm,sibling distance=.5cm }, level 2/.style={level distance=0.3cm,sibling distance=1.0cm }, level 3/.style={sibling distance=0.6cm,level distance=0.5cm}, sibling distance=0.6cm,baseline=2.5ex]
	\begin{pgfonlayer}{fforeground} 
	\node [] {} [grow=up]	 
	{	child  { edge from parent[draw=none]  child {node [sq] (A)  {} 	child {node [sq]  (B) {} 		} 		}}
		child { edge from parent[draw=none] child  {	 {node [sq,black] (l1) {} 
				}
		}}
	};
	\end{pgfonlayer}
	\begin{pgfonlayer}{foreground}
	\draw [fill=black!10, draw=black!10, line cap=round,rounded corners] (A) circle (0.15);
	\end{pgfonlayer}
	\begin{pgfonlayer}{main}
	\draw [line width=12pt,black!50, line cap=round,rounded corners] (B.center)  -- (A.center);
	\end{pgfonlayer}
	\begin{pgfonlayer}{bbackground}
	\draw [line width=17pt,opacity=1,draw=black!25,fill=black!25,line cap=round,rounded corners] (B.center) -- (A.center)--(l1.center) --cycle;
	\end{pgfonlayer}
	\end{tikzpicture}
	~~~h)
	\begin{tikzpicture}
	[level 1/.style={level distance=0cm,sibling distance=.5cm }, level 2/.style={level distance=0.3cm,sibling distance=1.0cm }, level 3/.style={sibling distance=0.6cm,level distance=0.5cm}, sibling distance=0.6cm,baseline=2.5ex]
	\begin{pgfonlayer}{fforeground} 
	\node [] {} [grow=up]	 
	{	child   { edge from parent[draw=none]   child {node [sq] (r)  {} 	 		} }	
		child   { edge from parent[draw=none]   child {node [sq] (m)  {} 	 		} }
		child { edge from parent[draw=none] child  {	 {node [sq,black] (l1) {}  child {node [sq]  (l2) {} 		}
				}
		}}
	};
	\end{pgfonlayer}
	\begin{pgfonlayer}{foreground}
	\draw [fill=black!50, draw=black!50, line cap=round,rounded corners] (r) circle (0.15);
	\end{pgfonlayer}
	\begin{pgfonlayer}{bbackground}
	\draw [line width=17pt,opacity=1,draw=black!25,fill=black!25,line cap=round,rounded corners] (r.center) --(l2.center) -- (l1.center)--(r.center);
	\end{pgfonlayer}
	\end{tikzpicture}
	\end{equation*}
\end{example}

\newcommand{\treeTT}{
	\begin{pgfonlayer}{fforeground} 
		\node [] {} [grow=up]	 
		{	
			child { edge from parent[draw=none]   child {node [sq] (RA)  {} 	child {node [sq]  (RB) {} 		} 		}}
			child { edge from parent[draw=none]   child {node [sq] (LA)  {} 	child {node [sq]  (LB) {} 		} 		}}
		};
	\end{pgfonlayer}
	\begin{pgfonlayer}{bbackground}
		\draw [line width=20pt,opacity=1,draw=black!25,fill=black!25,line cap=round,rounded corners] (LB.center) -- (LA.center)--(RA.center)  --(RB.center) --cycle;
	\end{pgfonlayer}
}

\newcommand{\treetwothree}{
\begin{pgfonlayer}{fforeground} 
\node [] {} [grow'=up]	 
{	
	child  {edge from parent[draw=none]  
		child {node [sq] (l1)  {}	
			child {node [sq] (l2)  {} 	
				child {node [sq] (l31) {} } 	 
				child {node [sq] (l32) {} } 	 	
				child {node [sq] (l33) {} } }}}
};
	\end{pgfonlayer}
\begin{pgfonlayer}{bbackground}
	\draw [line width=22pt,opacity=1,draw=black!25,fill=black!25,line cap=round,rounded corners] (l31.center)  --(l1.center) -- (l33.center) --  (l32.center) -- (l31.center) --(l1.center)--cycle;
\end{pgfonlayer}
}

\begin{example}
	Here below are all the vertical nested tubings of 
	\begin{tikzpicture}
	[level 1/.style={level distance=0cm,sibling distance=.5cm }, level 2/.style={level distance=0.3cm,sibling distance=1.0cm }, level 3/.style={sibling distance=0.5cm,level distance=0.5cm}, sibling distance=0.6cm,baseline=2.5ex]
	\node [] {} [grow=up]	 
	{	child  { edge from parent[draw=none]  child {node [sq] (A)  {} 	child {node [sq]  (B) {} 		} 		}}
		child { edge from parent[draw=none]   child {node [sq] (C)  {} 	child {node [sq]  (D) {} 		} 		}}		
	};
	\end{tikzpicture}.  %
	\begin{equation*}
	\pgfdeclarelayer{foreground} 
	\pgfdeclarelayer{fforeground} 
	\pgfdeclarelayer{background}
	\pgfdeclarelayer{bbackground}
	\pgfsetlayers{bbackground,background,main,foreground,fforeground}
	\begin{tikzpicture}
	[level 1/.style={level distance=0cm,sibling distance=.5cm }, level 2/.style={level distance=0.3cm,sibling distance=1.0cm }, level 3/.style={sibling distance=0.5cm,level distance=0.5cm}, sibling distance=0.6cm,baseline=2.5ex]
	\treeTT
	\begin{pgfonlayer}{foreground}
	\end{pgfonlayer}
	\begin{pgfonlayer}{main}
	\draw [line width=12pt,black!65, line cap=round,rounded corners] (RB.center)  -- (RA.center);
	\end{pgfonlayer}
	\end{tikzpicture}
	~
	\begin{tikzpicture}
	[level 1/.style={level distance=0cm,sibling distance=.5cm }, level 2/.style={level distance=0.3cm,sibling distance=1.0cm }, level 3/.style={sibling distance=0.5cm,level distance=0.5cm}, sibling distance=0.6cm,baseline=2.5ex]
	\treeTT
	\begin{pgfonlayer}{main}
	\draw [line width=12pt,black!65, line cap=round,rounded corners] (RB.center)  -- (RA.center) --(LA.center);
	\end{pgfonlayer}
	\end{tikzpicture}
	~
	\begin{tikzpicture}
	[level 1/.style={level distance=0cm,sibling distance=.5cm }, level 2/.style={level distance=0.3cm,sibling distance=1.0cm }, level 3/.style={sibling distance=0.5cm,level distance=0.5cm}, sibling distance=0.6cm,baseline=2.5ex]
	\treeTT
	\begin{pgfonlayer}{main}
	\draw [line width=12pt,black!65, line cap=round,rounded corners] (LB.center)  -- (LA.center) --(RA.center);
	\end{pgfonlayer}
	\end{tikzpicture}
	~
	\begin{tikzpicture}
	[level 1/.style={level distance=0cm,sibling distance=.5cm }, level 2/.style={level distance=0.3cm,sibling distance=1.0cm }, level 3/.style={sibling distance=0.5cm,level distance=0.5cm}, sibling distance=0.6cm,baseline=2.5ex]
	\treeTT
	\begin{pgfonlayer}{main}
	\draw [fill=black!40, draw=black!40, line cap=round,rounded corners] (RA) circle (0.12);
	\end{pgfonlayer}
	\begin{pgfonlayer}{background}
	\draw [line width=14pt,black!65, line cap=round,rounded corners] (RB.center)  -- (RA.center);
	\end{pgfonlayer}
	\end{tikzpicture}
	~
	\begin{tikzpicture}
	[level 1/.style={level distance=0cm,sibling distance=.5cm }, level 2/.style={level distance=0.3cm,sibling distance=1.0cm }, level 3/.style={sibling distance=0.5cm,level distance=0.5cm}, sibling distance=0.6cm,baseline=2.5ex]
	\treeTT
	\begin{pgfonlayer}{main}
	\draw [line width=9pt,black!40, line cap=round,rounded corners] (RB.center)  -- (RA.center) ;
	\end{pgfonlayer}
	\begin{pgfonlayer}{background}
	\draw [line width=14pt,black!65, line cap=round,rounded corners] (RB.center)  -- (RA.center) --(LA.center);
	\end{pgfonlayer}
	\end{tikzpicture}
	~
	\begin{tikzpicture}
	[level 1/.style={level distance=0cm,sibling distance=.5cm }, level 2/.style={level distance=0.3cm,sibling distance=1.0cm }, level 3/.style={sibling distance=0.5cm,level distance=0.5cm}, sibling distance=0.6cm,baseline=2.5ex]
	\treeTT
	\begin{pgfonlayer}{main}
	\draw [fill=black!40, draw=black!40, line cap=round,rounded corners] (RA) circle (0.12);
	\end{pgfonlayer}
	\begin{pgfonlayer}{background}
	\draw [line width=14pt,black!65, line cap=round,rounded corners] (LB.center)  -- (LA.center) --(RA.center);
	\end{pgfonlayer}
	\end{tikzpicture}
	~
	\begin{tikzpicture}
	[level 1/.style={level distance=0cm,sibling distance=.5cm }, level 2/.style={level distance=0.3cm,sibling distance=1.0cm }, level 3/.style={sibling distance=0.5cm,level distance=0.5cm}, sibling distance=0.6cm,baseline=2.5ex]
	\treeTT
	\begin{pgfonlayer}{foreground}
	\draw [fill=black!10, draw=black!10, line cap=round,rounded corners] (RA) circle (0.12);
	\end{pgfonlayer}
	\begin{pgfonlayer}{main}
	\draw [line width=9pt,black!40, line cap=round,rounded corners] (RB.center)  -- (RA.center) ;
	\end{pgfonlayer}
	\begin{pgfonlayer}{background}
	\draw [line width=14pt,black!65, line cap=round,rounded corners] (RB.center)  -- (RA.center) --(LA.center);
	\end{pgfonlayer}
	\end{tikzpicture}
	~
	\begin{tikzpicture}
	[level 1/.style={level distance=0cm,sibling distance=.5cm }, level 2/.style={level distance=0.3cm,sibling distance=1.0cm }, level 3/.style={sibling distance=0.5cm,level distance=0.5cm}, sibling distance=0.6cm,baseline=2.5ex]
	\treeTT
	\begin{pgfonlayer}{foreground}
	\draw [fill=black!10, draw=black!10, line cap=round,rounded corners] (RA) circle (0.11);
	\end{pgfonlayer}
	\begin{pgfonlayer}{main}
	\draw [line width=9pt,black!40, line cap=round,rounded corners] (LA.center) --(RA.center);
	\end{pgfonlayer}
	\begin{pgfonlayer}{background}
	\draw [line width=14pt,black!65, line cap=round,rounded corners] (RB.center)  -- (RA.center) --(LA.center);
	\end{pgfonlayer}
	\end{tikzpicture}
	~
	\begin{tikzpicture}
	[level 1/.style={level distance=0cm,sibling distance=.5cm }, level 2/.style={level distance=0.3cm,sibling distance=1.0cm }, level 3/.style={sibling distance=0.5cm,level distance=0.5cm}, sibling distance=0.6cm,baseline=2.5ex]
	\treeTT
	\begin{pgfonlayer}{foreground}
	\draw [fill=black!10, draw=black!10, line cap=round,rounded corners] (RA) circle (0.11);
	\end{pgfonlayer}
	\begin{pgfonlayer}{main}
	\draw [line width=9pt,black!40, line cap=round,rounded corners] (LA.center) --(RA.center);
	\end{pgfonlayer}
	\begin{pgfonlayer}{background}
	\draw [line width=14pt,black!65, line cap=round,rounded corners] (LB.center)  -- (LA.center) --(RA.center);
	\end{pgfonlayer}
	\end{tikzpicture}
	\end{equation*}
\end{example}

 \subsubsection{Horizontal nested tubings}

\begin{defn}
	A \emph{horizontal} nested tubing of $F\in Forest$ is a nested tubing of $F$ such that the boundary of each tube is a horizontal forest.  
\end{defn}

We let $hTub(F)$ denote the set of horizontal nested tubings of $F$. 
For any $p_1+...+p_k = N$ such that $p_i>0$, and $F\in \Forest'_N$, we let $hTub(F)_{p_1,...p_k}$ be the subset of $hTub(F)$ of those horizontal nested tubings $t=t_1\supset t_2 \supset \cdots \supset t_k$ such that $|\partial t_i|=p_i$ for each $1\leq i \leq k$.  
One has 
\begin{equation}\label{eq: decompo tubings}
hTub(F) = \bigsqcup_{p_1+...+p_k=N, p_i>0} hTub_{p_1,...,p_k}(F). 
\end{equation}

\begin{example}
	In Example \ref{ex: tubings and not}, a) is a tube whose boundary is not a horizontal forest, c) to g) are horizontal nested tubings, and h) is not horizontal. 
\end{example}

\begin{example}\label{example: horizontal tubings}
	Here below are all the horizontal nested tubings of 
	\begin{tikzpicture}
	[level 1/.style={level distance=0cm,sibling distance=.5cm }, level 2/.style={level distance=0.3cm,sibling distance=1.0cm }, level 3/.style={sibling distance=0.5cm,level distance=0.5cm}, sibling distance=0.6cm,baseline=2.5ex]
	\node [] {} [grow=up]	 
	{	child  { edge from parent[draw=none]  child {node [sq] (A)  {} 	child {node [sq]  (B) {} 		} 		}}
		child { edge from parent[draw=none]   child {node [sq] (C)  {} 	child {node [sq]  (D) {} 		} 		}}		
	};
	\end{tikzpicture}.  %
	\begin{equation*}
	\pgfdeclarelayer{foreground} 
	\pgfdeclarelayer{fforeground} 
	\pgfdeclarelayer{background}
	\pgfdeclarelayer{bbackground}
	\pgfsetlayers{bbackground,background,main,foreground,fforeground}
	\begin{tikzpicture}
	[level 1/.style={level distance=0cm,sibling distance=.5cm }, level 2/.style={level distance=0.3cm,sibling distance=1.0cm }, level 3/.style={sibling distance=0.5cm,level distance=0.5cm}, sibling distance=0.6cm,baseline=2.5ex]
	\treeTT
	\begin{pgfonlayer}{main}
	\draw [line width=12pt,black!65, line cap=round,rounded corners]  (RA.center) --(LA.center);
	\end{pgfonlayer}
	\end{tikzpicture}
	~
	\begin{tikzpicture}
	[level 1/.style={level distance=0cm,sibling distance=.5cm }, level 2/.style={level distance=0.3cm,sibling distance=1.0cm }, level 3/.style={sibling distance=0.5cm,level distance=0.5cm}, sibling distance=0.6cm,baseline=2.5ex]
	\treeTT
	\begin{pgfonlayer}{foreground}
	\draw [fill=black!40, draw=black!40, line cap=round,rounded corners] (RA) circle (0.15);
	\end{pgfonlayer}
	\begin{pgfonlayer}{background}
	\draw [line width=14pt,black!65, line cap=round,rounded corners] (RA.center) --(LA.center);
	\end{pgfonlayer}
	\end{tikzpicture}
	~
		\begin{tikzpicture}
	[level 1/.style={level distance=0cm,sibling distance=.5cm }, level 2/.style={level distance=0.3cm,sibling distance=1.0cm }, level 3/.style={sibling distance=0.5cm,level distance=0.5cm}, sibling distance=0.6cm,baseline=2.5ex]
	\treeTT
	\begin{pgfonlayer}{main}
	\draw [fill=black!40, draw=black!40, line cap=round,rounded corners] (RA) circle (0.15);
	\end{pgfonlayer}
	\begin{pgfonlayer}{background}
	\draw [line width=14pt,black!65, line cap=round,rounded corners] (RB.center)  -- (RA.center) --(LA.center);
	\end{pgfonlayer}
	\end{tikzpicture}
	~
	\begin{tikzpicture}
	[level 1/.style={level distance=0cm,sibling distance=.5cm }, level 2/.style={level distance=0.3cm,sibling distance=1.0cm }, level 3/.style={sibling distance=0.5cm,level distance=0.5cm}, sibling distance=0.6cm,baseline=2.5ex]
	\treeTT
	\begin{pgfonlayer}{main}
	\draw [line width=9pt,black!40, line cap=round,rounded corners] (LA.center) --(RA.center);
	\end{pgfonlayer}
	\begin{pgfonlayer}{background}
	\draw [line width=14pt,black!65, line cap=round,rounded corners] (RB.center)  -- (RA.center) --(LA.center);
	\end{pgfonlayer}
	\end{tikzpicture}
	~
	\begin{tikzpicture}
	[level 1/.style={level distance=0cm,sibling distance=.5cm }, level 2/.style={level distance=0.3cm,sibling distance=1.0cm }, level 3/.style={sibling distance=0.5cm,level distance=0.5cm}, sibling distance=0.6cm,baseline=2.5ex]
	\treeTT
	\begin{pgfonlayer}{main}
	\draw [line width=9pt,black!40, line cap=round,rounded corners] (LA.center) --(RA.center);
	\end{pgfonlayer}
	\begin{pgfonlayer}{background}
	\draw [line width=14pt,black!65, line cap=round,rounded corners] (LB.center)  -- (LA.center) --(RA.center);
	\end{pgfonlayer}
	\end{tikzpicture}
	~
	\begin{tikzpicture}
	[level 1/.style={level distance=0cm,sibling distance=.5cm }, level 2/.style={level distance=0.3cm,sibling distance=1.0cm }, level 3/.style={sibling distance=0.5cm,level distance=0.5cm}, sibling distance=0.6cm,baseline=2.5ex]
	\treeTT
	\begin{pgfonlayer}{foreground}
	\draw [fill=black!10, draw=black!10, line cap=round,rounded corners] (RA) circle (0.12);
	\end{pgfonlayer}
	\begin{pgfonlayer}{main}
	\draw [line width=9pt,black!40, line cap=round,rounded corners] (RB.center)  -- (RA.center) ;
	\end{pgfonlayer}
	\begin{pgfonlayer}{background}
	\draw [line width=14pt,black!65, line cap=round,rounded corners] (RB.center)  -- (RA.center) --(LA.center);
	\end{pgfonlayer}
	\end{tikzpicture}
	~
	\begin{tikzpicture}
	[level 1/.style={level distance=0cm,sibling distance=.5cm }, level 2/.style={level distance=0.3cm,sibling distance=1.0cm }, level 3/.style={sibling distance=0.5cm,level distance=0.5cm}, sibling distance=0.6cm,baseline=2.5ex]
	\treeTT
	\begin{pgfonlayer}{foreground}
	\draw [fill=black!10, draw=black!10, line cap=round,rounded corners] (RA) circle (0.11);
	\end{pgfonlayer}
	\begin{pgfonlayer}{main}
	\draw [line width=9pt,black!40, line cap=round,rounded corners] (LA.center) --(RA.center);
	\end{pgfonlayer}
	\begin{pgfonlayer}{background}
	\draw [line width=14pt,black!65, line cap=round,rounded corners] (RB.center)  -- (RA.center) --(LA.center);
	\end{pgfonlayer}
	\end{tikzpicture}
	~
	\begin{tikzpicture}
	[level 1/.style={level distance=0cm,sibling distance=.5cm }, level 2/.style={level distance=0.3cm,sibling distance=1.0cm }, level 3/.style={sibling distance=0.5cm,level distance=0.5cm}, sibling distance=0.6cm,baseline=2.5ex]
	\treeTT
	\begin{pgfonlayer}{foreground}
	\draw [fill=black!10, draw=black!10, line cap=round,rounded corners] (RA) circle (0.11);
	\end{pgfonlayer}
	\begin{pgfonlayer}{main}
	\draw [line width=9pt,black!40, line cap=round,rounded corners] (LA.center) --(RA.center);
	\end{pgfonlayer}
	\begin{pgfonlayer}{background}
	\draw [line width=14pt,black!65, line cap=round,rounded corners] (LB.center)  -- (LA.center) --(RA.center);
	\end{pgfonlayer}
	\end{tikzpicture}
	\end{equation*}
\end{example}

\subsection{Post-Lie Magnus expansion in terms of nested tubings}

For a post-Lie algebra $\g$, the pLMe of $x\in \g$  is the element $\chi(x)$ that satisfies the equation 
\begin{equation*}
\exp_{\cdot}(x)=\exp_{\ast}\big(\chi(x)\big).
\end{equation*} 

In particular, in $\hat{\mathcal{U}}_{\ast}(\TB(\K))$, it is a sum over all forests in $\Forest$:  
\begin{equation*}
\chi(\bsq)= \sum_{F\in \Forest} c_F F.
\end{equation*}  
We propose to compute the coefficients $c_F$ for any forest $F$ by two methods.

\subsubsection{Post-Lie Magnus expansion via vertical nested tubings}

As showed in \cite[Equation (81)]{KI} the pLMe can be expressed as a sum  
$\chi= \sum_{n\geq 1} \chi_n$, where $\chi_1(x)=x$ and  
\begin{equation*}
\chi_n(x)= \frac{x^n}{n!} - \sum_{\stackrel{k\geq 2, p_i>0}{p_1+...+p_k=n}} \frac{1}{k!} \chi_{p_1}(x)\ast \cdots \ast \chi_{p_k}(x) ~\text{ for all } x\in \g. 
\end{equation*} 

We will describe $\chi_n\co \TB (\K) \to \TB (\K)\subset \hat{\mathcal{U}_{\ast}}(\TB(\K))$ of the free post-Lie algebra on $\K=\K<\bsq>$. 

Note that $\chi_n(\bsq)$ is a homogeneous Lie polynomial of degree $n$. 
In particular, in $\hat{\mathcal{U}}_{\ast}(\TB(\K))$ it is  a sum over all forests in   $\Forest$   with $n$ vertices:  
\begin{equation*}
\chi_n(\bsq)= \sum_{F\in \Forest_n} c_F F.
\end{equation*}

\begin{defn}
	For $n,k\geq 2$, partition $p_1+...+p_k=n$ by strictly positive integers and $F$ in $\Forest'_n$, let $\mathfrak{D}(F)_{p_1,...,p_k}$ be the set of all the possible expressions  
	\begin{equation}\label{eq: graft and conc}
	F= F_1 \ltimes_1 (\cdots (\ltimes_2 (F_{k-2}  \ltimes (F_{k-1} \ltimes_{k-1} F_k))) \cdots ),  
	\end{equation}
	where $F_i$ runs through the forests of $p_i$ vertices that are not horizontal and $\ltimes_i$ is either the concatenation or the one vertex grafting operations $\plprod_v$ for some $v$. 
\end{defn}

\begin{lem}\label{lem: decompo}
	There is a bijection between $\mathfrak{D}(F)_{p_1,...,p_k}$ and the set of all the vertical nested tubings  $t=t_1\supset t_2 \supset \cdots \supset t_k$ of $F$ such that $|\partial t_i|=p_i$ for each $1\leq i \leq k$. 
\end{lem}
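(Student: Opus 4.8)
The plan is to exhibit the bijection explicitly, and then to check that the clauses defining a vertical nested tubing correspond, one by one, to the constraints imposed on a decomposition in $\mathfrak{D}(F)_{p_1,\ldots,p_k}$.

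First I would describe the map from tubings to decompositions. Let $t = t_1 \supsetneq t_2 \supsetneq \cdots \supsetneq t_k$ be a vertical nested tubing of $F$ with $|\partial t_i| = p_i$; by Definition~\ref{de: pre tubing} the maximal tube occurs among the $t_i$, so $t_1 = V(F)$, and $\partial t_i = t_i \setminus t_{i+1}$ for $i<k$ while $\partial t_k = t_k$. Identifying each $\partial t_i$ with the sub-forest of $F$ it spans, set $F_i := \partial t_i$, a forest with $p_i$ vertices. Condition~(1) of Definition~\ref{de: tubing} says that $F_i$ is not a horizontal forest of more than one tree, hence is an admissible factor; condition~(2) says that the roots of $\partial t_i$ are either all grafted onto a single vertex $v$ of the sub-tube $t_{i+1}$, or none of them is attached to $t_{i+1}$. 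In the first case $t_i = F_i \plprod_v t_{i+1}$, in the second $t_i = F_i \times t_{i+1}$, in the notation of Notation~\ref{notation: operations graft}. Reading these identities from the inside out produces an expression $F = F_1 \ltimes_1(F_2 \ltimes_2(\cdots \ltimes_{k-1} F_k)\cdots)$ of exactly the shape required in the definition of $\mathfrak{D}(F)_{p_1,\ldots,p_k}$.

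Conversely, given such a decomposition I would set $t_k := F_k$ and, for $i<k$, $t_i := F_i \ltimes_i(F_{i+1}\ltimes_{i+1}(\cdots))$, each viewed as the evident sub-forest of $F$; these satisfy $V(F) = t_1 \supsetneq t_2 \supsetneq \cdots \supsetneq t_k$ and $|\partial t_i| = |F_i| = p_i$. I would then check that each $t_i$ is a tube in the precise sense of Definition~\ref{de: tube} (and its forest version) --- a connected higher set whose intersection with each $\mathfrak{b}_v$ is again a higher set --- and that the collection $\{t_i\}$ satisfies the clauses of Definitions~\ref{de: pre tubing} and~\ref{de: tubing}: it contains the maximal tube and at least two tubes by construction; condition~(1) holds because each $F_i$ is non-horizontal; and condition~(2) holds because $\ltimes_i\in\{\times,\plprod_v\}$ realises precisely the ``all-or-nothing'' attachment of the roots of $\partial t_i$ to the sub-tube $t_{i+1}$. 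The two assignments are mutually inverse essentially by construction: the forward map recovers $F_i$ as $\partial t_i$ and the operation $\ltimes_i$ from the way $t_{i+1}$ sits inside $t_i$, while the backward map rebuilds $t_i$ as the suffix $F_i\ltimes_i(\cdots)$ of the decomposition.

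The step I expect to be the main obstacle is the verification that the nested sub-forests $t_i = F_i\ltimes_i(\cdots)$ really are tubes, i.e.\ higher sets obeying the planarity constraint on the sets $\mathfrak{b}_v$. This is where the convention that every grafting in Notation~\ref{notation: operations graft} is performed on the \emph{left side} of a vertex is essential: it guarantees that passing from $t_i$ to the suffix $t_{i+1}$ removes a left-justified collection of branches --- which is exactly the local shape of a tube --- and, conversely, that the complement $\partial t_i$ of a sub-tube is a legal factor $F_i$, non-horizontal precisely when Definition~\ref{de: tubing}(1) is in force. Making this rigorous amounts to a careful but routine bookkeeping that matches, term by term, the clauses of Definitions~\ref{de: tube}, \ref{de: pre tubing} and~\ref{de: tubing} against the clauses in the definition of $\mathfrak{D}(F)_{p_1,\ldots,p_k}$, organised by an induction on $k$ that handles the ``inside-out'' reading of the decomposition.
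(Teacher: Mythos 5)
Your proposal is correct and follows essentially the same route as the paper's proof: both establish the correspondence by reading each nesting step $t_{i+1}\subset t_i$ as an operation $F_i\ltimes_i(\cdots)$ and then matching, clause by clause, the left-side grafting convention with the $\mathfrak{b}_v$ condition, the ``all-or-nothing'' attachment with condition (2) of Definition~\ref{de: tubing}, and the right-most parenthesisation/left concatenation with the higher-set condition for $<_h$ (the paper merely sets up the map in the opposite direction, from decompositions to tubings, and argues surjectivity rather than exhibiting the inverse). No gap beyond the routine bookkeeping you already flag, which the paper also leaves at the same level of detail.
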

\begin{proof}
	Since concatenation and grafting do not remove vertices nor edges, the decomposition \eqref{eq: graft and conc} provides an embedding of $F_1,...,F_k$ into $F$, which we claim, can be represented by a vertical nested tubing. 
	Explicitly, the tube $t_k$ is $F_k$ seen in $F$ and is the most right sided subforest forest; the tube $t_{k-1}$ is sub forest $F_{k-1} \ltimes_{k-1} F_k$ of $F$ and it contains $F_k$, etc.  
	For example,   one has  
	\begin{equation*}
	\begin{tikzpicture}
	[  level distance=0.5cm, level 2/.style={sibling distance=0.6cm}, sibling distance=0.6cm,baseline=2.5ex,level 1/.style={level distance=0.4cm}]
	\node [] {} [grow=up]
	{	
		child {node [sq]  {}
		}
	};
	\end{tikzpicture}  
	\triangleright_{a} 
	\left(
	\begin{tikzpicture}
	[  level distance=0.5cm, level 2/.style={sibling distance=0.6cm}, sibling distance=0.6cm,baseline=2.5ex,level 1/.style={level distance=0.4cm}]
	\node [] {} [grow=up]
	{	
		child {node [sq]  {}
			child {node [sq]  {} 
			}	
		}
	};
	\end{tikzpicture} 
	\triangleright_{b}
	\left(
	\begin{tikzpicture}
	[  level distance=0.5cm, level 2/.style={sibling distance=0.6cm}, sibling distance=0.6cm,baseline=2.5ex,level 1/.style={level distance=0.4cm}]
	\node [] {} [grow=up]
	{	
		child {node [sq,label=left:\tiny{$b$}]  {}	
		}
	};
	\end{tikzpicture} 
	\times 
	\left(
	\begin{tikzpicture}
	[  level distance=0.5cm, level 2/.style={sibling distance=0.6cm}, sibling distance=0.6cm,baseline=2.5ex,level 1/.style={level distance=0.4cm}]
	\node [] {} [grow=up]
	{	
		child {node [sq,label=left:\tiny{$a$}]  {}
			child {node [sq]  {} 
			}
		}
	};
	\end{tikzpicture}
	\right)\right)\right)
	\longleftrightarrow 
	\pgfdeclarelayer{foreground} 
	\pgfdeclarelayer{fforeground} 
	\pgfdeclarelayer{background}
	\pgfdeclarelayer{bbackground}
	\pgfsetlayers{bbackground,background,main,foreground,fforeground}
	\begin{tikzpicture}
	[level 1/.style={level distance=0cm,sibling distance=.8cm }, level 2/.style={level distance=0.3cm,sibling distance=1.0cm }, level 3/.style={sibling distance=0.6cm,level distance=0.5cm}, sibling distance=0.6cm,baseline=2.5ex]
	\begin{pgfonlayer}{fforeground} 
	\node [] {} [grow=up]	 
	{	child  { edge from parent[draw=none]  child {node [sq] (A)  {} 	child {node [sq]  (B) {} 		} 		child {node [sq] (C) {}} 	 	}}
		child { edge from parent[draw=none] child  {	 {node [sq,black] (l1) {} 
					child [black] {node [sq] (l2) {}  
						child {node [sq] (l3) {}} 
					}
				}
		}}
	};
	\end{pgfonlayer}
	\begin{pgfonlayer}{foreground}
	\draw [line width=7pt,opacity=1,black!10,line cap=round,rounded corners]  (A.center) -- (B.center);
	\end{pgfonlayer}
	\begin{pgfonlayer}{main}
	\draw [line width=12pt,black!40, line cap=round,rounded corners] (l1.center)  -- (A.center) -- (B.center);
	\end{pgfonlayer}
	\begin{pgfonlayer}{background}
	\draw [line width=19pt,black!65,line cap=round,rounded corners] (l3.center) -- (l2.center) -- (l1.center) --(A.east)   -- (B.center) ;
	\end{pgfonlayer}
	\begin{pgfonlayer}{bbackground}
	\draw [line width=26pt,opacity=1,black!25,line cap=round,rounded corners] (l1.center) -- (l2.center) -- (l3.center)     -- (B.center) -- (A.center) --cycle;
	\end{pgfonlayer}
	\end{tikzpicture}.
	\end{equation*}
	This assignment is well-defined: 
	\begin{itemize}
		\item grafting is on the left side of a vertex; this is condition on $\mathfrak{b}_v$ in Definition \ref{de: tube}; 
		\item one vertex grafting of forests corresponds to the condition \eqref{item: forest conneccted to one vertex} of Definition \ref{de: tubing}; 
		\item concatenations with right most parentheses correspond to the higher set condition for the order $<_h$. 
	\end{itemize}
	Let us show that such assignment is surjective. 
	Firstly, the above discussion shows that the tubings are such that they do not encode any other type of operations (than concatenations and one vertex graftings with right most parenthesis).  
	Secondly, note that condition \eqref{item: max tube} of Definition \ref{de: tubing} ensures that the whole forest is decomposed.
	Moreover, since the tubes $t_i$ are such that $|\partial t_i| = p_i$, their boundary $\partial t_i$ corresponds to sub forests of $F$ that are in $\Forest_{p_i}$. Finally, condition \eqref{item: horizontal tubes} of Definition \ref{de: tubing} corresponds to the absence of the forest $\bsq^{\times p}$.     
\end{proof}

\begin{prop}
	For $F\in \Forest'_n$, one has 
	$c_F= \sum_{t\in Tub(F)}  c_t$, where $c_t=\frac{-1}{|t|!} \prod_{t' \in t} c_{\partial t'}$ and $c_{\hspace{1pt}\bsq}=1$. 
\end{prop}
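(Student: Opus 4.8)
The plan is to induct on $n=|F|$, the base case $n=1$ being vacuous (since $\Forest'_1=\emptyset$), while the normalization $c_{\bsq}=1$ simply records $\chi_1(\bsq)=\bsq$. For the inductive step I would start from the recursion recalled above (equivalently, from $\exp_{\cdot}(\bsq)=\exp_{\ast}(\chi(\bsq))$), which in $\hat{\ca U}_{\ast}(\TB(\K))$ reads
\[
\chi_n(\bsq)=\tfrac{1}{n!}\,\bsq^{\times n}-\sum_{k\geq 2}\ \sum_{p_1+\cdots+p_k=n,\ p_i>0}\tfrac{1}{k!}\,\chi_{p_1}(\bsq)\ast\cdots\ast\chi_{p_k}(\bsq)
\]
(the $\cdot$-power $\bsq^{n}$ being the horizontal forest $\bsq^{\times n}$), and read off the coefficient of a fixed $F\in\Forest'_n$ in the forest basis. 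Since $F$ is not horizontal, the term $\tfrac1{n!}\bsq^{\times n}$ contributes nothing, so that $c_F=-\sum_{k\geq 2}\sum_{p_1+\cdots+p_k=n,\ p_i>0}\tfrac{1}{k!}\,\langle F,\ \chi_{p_1}(\bsq)\ast\cdots\ast\chi_{p_k}(\bsq)\rangle$, where $\langle F,-\rangle$ denotes the coefficient on $F$.

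Each $p_i$ here is $<n$, so by the inductive hypothesis $\chi_{p_i}(\bsq)=\sum_{G_i\in\Forest_{p_i}}c_{G_i}G_i$; one also needs that $c_{\bsq^{\times p}}=0$ for $p\geq 2$, which I would deduce from the fact that $\bsq^{\times p}$ is the only forest whose reduced shuffle coproduct contains a term $\bsq\ot\bsq^{\times (p-1)}$, so that primitivity of the Lie polynomial $\chi_p(\bsq)$ forces that coefficient to vanish. Everything then reduces to the identity
\[
\big\langle F,\ \chi_{p_1}(\bsq)\ast\cdots\ast\chi_{p_k}(\bsq)\big\rangle=\sum_{\delta\in\mathfrak D(F)_{p_1,\dots,p_k}}c_{F_1(\delta)}\cdots c_{F_k(\delta)},
\]
with $\delta$ running over the decompositions $F=F_1\ltimes_1(\cdots(F_{k-1}\ltimes_{k-1}F_k)\cdots)$ that define $\mathfrak D(F)_{p_1,\dots,p_k}$.

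Establishing this identity is the hard part. I would expand each $\chi_{p_i}(\bsq)$ not in the forest basis but in the basis of primitive elements of $(\ca U(\h),\ast,\Delta_{sh})$, i.e. as a combination of single trees of $\ca G$: for a single tree $T$ one has $T\ast X=T\times X+T\triangleright X$ by Lemma~\ref{lem: grafting of Lie element} (a tree being $\Delta_{sh}$-primitive), and $T\triangleright X$ is a sum of one-vertex graftings of $T$ onto $X$, given by \eqref{eq: grafting sq tree to any tree} when $T$ is square and by \eqref{eq: grafting lie to any tree}--\eqref{eq: relation contraction and grafting forests} when $T$ carries a round vertex; the latter rewrites the Lie-element contributions as signed sums of square forests, which is exactly what turns the primitive-basis coefficients into the square-forest coefficients $c_{F_i}$ of the statement (and explains why the contributions of the multi-tree building blocks recombine correctly). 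Unwinding $G_1\ast(G_2\ast(\cdots))$ one factor at a time, and bookkeeping the shuffle coproducts of the remaining factors via Lemmas~\ref{lem: calc 2 F=T} and \ref{lem: calc 1 F=TA...Tk} together with the $D$-bialgebra rules \ref{D bial item1}, \ref{D bial item4}, \ref{D bial item5}, one matches the Grossman--Larson expansion term by term with iterated concatenations and one-vertex graftings; the delicate verification is precisely that a whole building block is either concatenated to the left of the current forest or has all its roots grafted at one common vertex, which is what the operations $\ltimes_i\in\{\times,\plprod_v\}$ of $\mathfrak D$ encode.

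Finally I would assemble the pieces. By Lemma~\ref{lem: decompo} the set $\mathfrak D(F)_{p_1,\dots,p_k}$ is in bijection with the vertical nested tubings $t=t_1\supsetneq\cdots\supsetneq t_k$ of $F$ with $|\partial t_i|=p_i$, the bijection sending $F_i$ to $\partial t_i$ (so $c_{F_i(\delta)}=c_{\partial t_i}$). Substituting, and observing that summing over $k\geq 2$, over the compositions $p_1+\cdots+p_k=n$, and over these tubings is the same as summing once over all of $Tub(F)$ (each vertical nested tubing being a chain with at least two tubes whose largest member is all of $F$), one gets
\[
c_F=-\sum_{k\geq 2}\tfrac{1}{k!}\sum_{(t_1\supsetneq\cdots\supsetneq t_k)\in Tub(F)}\ \prod_{i=1}^{k}c_{\partial t_i}=\sum_{t\in Tub(F)}\tfrac{-1}{|t|!}\prod_{t'\in t}c_{\partial t'}=\sum_{t\in Tub(F)}c_t,
\]
which is the claim. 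The recursion defining the $c_t$ is well founded since each $\partial t'$ is a proper sub-forest of $F$, the smallest possibility being a single $\bsq$ with $c_{\bsq}=1$.
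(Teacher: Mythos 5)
Your argument is correct and follows essentially the same route as the paper's proof: read off the coefficient of $F$ from the recursion for $\chi_n$, use the Lie-polynomial (primitivity) structure of the $\chi_{p_i}$ together with Lemma \ref{lem: grafting of Lie element} and \eqref{eq: grafting sq tree to any tree}--\eqref{eq: relation contraction and grafting forests} to reduce the Grossman--Larson products to right-most-parenthesized concatenations and one-vertex graftings, and then invoke Lemma \ref{lem: decompo} to re-index by vertical nested tubings. Your explicit justification that $c_{\bsq^{\times p}}=0$ for $p\geq 2$ via the term $\bsq\otimes\bsq^{\times(p-1)}$ in the reduced shuffle coproduct is a welcome addition (the paper only records this fact in the remark following the proposition), and does not change the substance of the argument.
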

\begin{proof}
	As $\chi_1(\bsq)=\bsq$, the first coefficient $c_{\hspace{1pt}\bsq}$, which is the coefficient of the unique tubing of $\bsq$, is $1$. 
	Let $n\geq 2$ and $F$ be a forest in $\Forest_n$ that is not $\bsq^{\times n}$. To compute $c_F$, let us remark that $F$ appears in  
	$- \frac{1}{k!} \chi_{p_1}(x)\ast \cdots \ast \chi_{p_k}(x)$ for some $k$--partitions $p_1+...+p_k=n$. 
	For each of these partitions, $F$ is obtained  by shuffle concatenations and/or graftings of $k$ forests, say $F_1,....,F_k$, that belong to  $\chi_{p_1}(x)$,..., $\chi_{p_k}(x)$ respectively (shuffles arise from \ref{D bial item4}). 
	In fact, there are several operations of these types that we can exclude. 
	Indeed, since for all $p$ the element  $\chi_p(\bsq)$ is a Lie polynomial, every $F_i\in \chi_{p_i}$ for each $p_i$, is either a tree or belongs to a commutator. 
	Therefore, thanks to \eqref{eq: grafting sq tree to any tree} and \eqref{eq: grafting lie to any tree}-\eqref{eq: relation contraction and grafting forests} and to Lemma \ref{lem: grafting of Lie element}, when considering the product $F_i\ast F_{i+1}$ it is enough to consider the concatenation $F_iF_{i+1}$ and the grafting $F_i\plprod_v F_{i+1}$ for each vertex $v$ of $F_{i+1}$.     
	Moreover, since $\ast$ is associative, we can restrict ourselves to applying the concatenation and the one vertex grafting operations with the right most parentheses.  
	In other words, it is enough to consider all the expressions of the form \eqref{eq: graft and conc} which, thanks to Lemma \ref{lem: decompo}, correspond to vertical nested tubings. 	
	For each vertical nested tubings  $t=t_1\supset t_2 \supset \cdots \supset t_k$ of $F$ such that $|\partial t_i|=p_i$ for each $1\leq i \leq k$, 
	we let $c_t=\frac{-1}{k!}c_{F_1}c_{F_2}\cdots c_{F_k}$. 
	By summing over all the possible tubings, one obtains $c_F = \sum_{t\in Tub(F)} c_t$; note that condition \eqref{item: at least 2 tubes} ensures that tubings encode non-trivial decompositions. In addition, note that each $c_{F_i}$ itself is given by $c_{\partial t_i}$, which gives the result. 
	Note that Lemma \ref{lem: decompo} stands for forests $F_i$ in $\Forest'_{p_i}$, some of which may not be in $\chi_{p_i}(\bsq)$, that is, there are forests $F_i$ such that $c_{F_i}=0$. This is not an issue since if $c_{F_i}=0$ for some $i$, then $c_t=0$. 
\end{proof}

\begin{rem}
	The last  condition \eqref{item: horizontal tubes} of Definition \ref{de: tubing} may be removed, provided that one modifies Lemma \ref{lem: decompo} accordingly. 
	Indeed, since $c_{\bsqsmall^{\times n}}=0$ for $n\geq 2$, this does not interfere in the result.     
\end{rem}

\subsubsection{Post-Lie Magnus expansion via horizontal nested tubings}
This section is devoted to the computation of the pLMe using \emph{horizontal} nested tubings. 
While the previous method is recursive, the present method allows to compute the coefficient $c_F$ of any forest $F\in \Forest'_N$ for $N\geq 2$ in a closed form. 

We will use the following form of $\chi$: 
\begin{equation}\label{eq: chi = log exp}
\chi(x) = \log_*(\exp_{\cdot}(x)) = \sum_{k\geq 1} \frac{(-1)^{k-1}}{k} \left(\sum_{j\geq 1} \frac{x^j}{j!}\right)^{* k}. 
\end{equation}
In particular we will be led to investigate elements of the form 
\begin{equation}\label{eq: p& ast ...pk}
\bsq^{\times p_1} \ast ( \cdots \ast (\bsq^{\times p_{k-1}} \ast \bsq^{\times p_k})\cdots ), 
\end{equation} 
for partitions $p_1+...+p_k = N$ with $p_i>0$.

\begin{defn}
	For $N,k\geq 2$, partition $p_1+...+p_k=N$ by strictly positive integers and $F$ in $\Forest'_N$, let $h\mathfrak{D}(F)_{p_1,...,p_k}$ be the set of all 
	the possible expressions  of $F$ of the form 
	\begin{equation}\label{eq: express mixed oper}
	F = \bsq^{\times p_1} \ltimes_1 (\cdots (\ltimes_2 ( \bsq^{\times p_{k-2}}  \ltimes (  \bsq^{\times p_{k-1}} \ltimes_{k-1}  \bsq^{\times p_k} ))) \cdots ),  
	\end{equation}
	in which each  $\ltimes_i$ is an operation of the form $\ltimes_{v_1,...,v_k}^{n_0, n_1,...,n_k}$ as introduced in Notation \ref{notation: operations graft}, item \eqref{item: operations grafting/concat}. 
\end{defn}

\begin{lem}\label{lem: surjective map decompo to htubings}
	For each $N\geq 2$ and each $p_1+...+p_k=N, p_i>0$, there is a bijection between $h\mathfrak{D}(F)_{p_1,...,p_k}$ and $hTub_{p_1,...,p_k}(F)$.  
\end{lem}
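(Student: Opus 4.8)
The plan is to produce mutually inverse maps between $h\mathfrak{D}(F)_{p_1,\dots,p_k}$ and $hTub_{p_1,\dots,p_k}(F)$, proceeding in close parallel with the proof of Lemma \ref{lem: decompo}. First I would treat the forward direction. Given an expression as in \eqref{eq: express mixed oper}, observe that every operation $\ltimes^{n_0,n_1,\dots,n_m}_{v_1,\dots,v_m}$ only \emph{adds} edges (by grafting roots onto left sides of vertices) and juxtaposes trees (by concatenation), so it neither deletes vertices nor edges. Hence the expression realizes $\bsq^{\times p_1},\dots,\bsq^{\times p_k}$ as pairwise disjoint sub-forests of $F$ whose union is all of $F$. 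Let $t_j$ be the sub-forest of $F$ that is the image of the right-nested sub-expression $\bsq^{\times p_j} \ltimes_j(\cdots(\bsq^{\times p_{k-1}} \ltimes_{k-1} \bsq^{\times p_k}))$, so that $t_k$ is the image of $\bsq^{\times p_k}$ and $t_1 = F$. Then $t_1 \supsetneq t_2 \supsetneq \cdots \supsetneq t_k$ is a nested chain with $\partial t_j$ equal to the image of $\bsq^{\times p_j}$, hence a horizontal forest on $p_j$ vertices. One checks that each $t_j$ is a tube: the graftings in $\ltimes^{\dots}_{\dots}$ occur on left sides of vertices, which is precisely the higher-set condition on the sets $\mathfrak{b}_v$ in Definition \ref{de: tube}, while the concatenations, performed with the right-most parentheses as in \eqref{eq: express mixed oper}, place new trees to the left, giving the higher-set condition for $<_h$. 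Since $k\geq 2$, condition \eqref{item: at least 2 tubes} holds; since $t_1 = F$, condition \eqref{item: max tube} holds; and since each $\partial t_j$ is a horizontal forest on $p_j$ vertices, this is a horizontal nested tubing lying in $hTub_{p_1,\dots,p_k}(F)$.

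For the inverse map, take $t = t_1 \supset \cdots \supset t_k$ in $hTub_{p_1,\dots,p_k}(F)$. Since $\partial t_j$ is a horizontal forest on $p_j$ vertices, it is canonically identified with $\bsq^{\times p_j}$. Reading from $j=k$ (where $t_k = \partial t_k \cong \bsq^{\times p_k}$) outward, consider the vertices of $\partial t_j$: if $v\in\partial t_j$ is not a root of $F$, its $\prec$-successor $w$ satisfies $w\in t_j$ (as $t_j$ is a higher set for $\prec$), and since $\partial t_j$ has no internal edge $w$ cannot lie in $\partial t_j$, so $w$ lies in a strictly larger tube and hence in $t_{j+1}$; moreover the higher-set condition on $\mathfrak{b}_w$ for the tube $t_{j+1}$ forces $v$ to sit on the left side of $w$. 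If instead $v$ is a root of $F$, the $<_h$-higher-set condition forces the corresponding tree to be concatenated to the left of the part of $F$ cut out by $t_{j+1}$. Recording, for each $j$, which vertices of $\partial t_j$ are grafted onto which vertices $v_1,\dots,v_m$ of $t_{j+1}$ (with multiplicities $n_1,\dots,n_m$, the $v_i$ taken in the canonical order) and how many of its trees, $n_0$, are concatenated on the left, yields exactly one operation $\ltimes_j := \ltimes^{n_0,n_1,\dots,n_m}_{v_1,\dots,v_m}$ in the sense of Notation \ref{notation: operations graft}\eqref{item: operations grafting/concat}; assembling these in the right-nested fashion gives an expression in $h\mathfrak{D}(F)_{p_1,\dots,p_k}$.

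Finally I would note that the two maps are mutually inverse: an expression determines, layer by layer, precisely the grafting-and-concatenation data that the inverse map reads off from the associated tubing, and conversely, so no information is lost in either direction. The main obstacle is the verification underlying both directions, namely that the tube axioms (connected higher set of $(V(T),\prec)$, the condition on each $\mathfrak{b}_v$, and the $<_h$-higher-set condition for forests) translate \emph{exactly} into ``each vertex of $\partial t_j$ is grafted on a left side of a vertex of $t_{j+1}$ or is a root concatenated to the left'', together with the dual fact that applying any operation $\ltimes^{n_0,n_1,\dots,n_m}_{v_1,\dots,v_m}$ to the horizontal forest $\bsq^{\times p_j}$ produces a layer $\partial t_j$ that is again a horizontal forest; once this correspondence is in place, the remaining bookkeeping is identical in spirit to that of Lemma \ref{lem: decompo}, with the simplification that here every $\partial t_i$ is the fixed forest $\bsq^{\times p_i}$ rather than an arbitrary non-horizontal forest.
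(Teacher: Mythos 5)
Your proof is correct and follows essentially the same route as the paper's: the forward map reads the right-nested sub-expressions as a chain of tubes whose boundaries are the horizontal layers $\bsq^{\times p_j}$, the tube axioms are verified via the left-side grafting and the right-most parenthesization, and the inverse map reads off one operation $\ltimes^{n_0,n_1,\dots,n_m}_{v_1,\dots,v_m}$ from each consecutive pair of tubes. (Only a wording slip: the $\prec$-successor $w$ of a non-root $v\in\partial t_j$ lies in the strictly \emph{smaller} tube $t_{j+1}$, as your own conclusion correctly states.)
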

\begin{proof}
Since the operations $\ltimes_{v_1,...,v_k}^{n_0,n_1,...,n_k}$ do not remove vertices nor edges, the decomposition \eqref{eq: express mixed oper} provides an embedding of $\bsq^{\times p_1},..., \bsq^{\times p_k}$ into $F$, which we claim, can be represented by a horizontal nested tubing. Explicitly, the tube $t_k$ is $\bsq^{\times p_k}$ seen in $F$ as the most right sided sub forest; the tube $t_{k-1}$ is the sub forest $\bsq^{\times p_{k-1}} \ltimes_{k-1}  \bsq^{\times p_k}$ that contains $\bsq^{\times p_k}$ and $\bsq^{\times p_{k-1}}$, etc.  
For example,  one has  
	\begin{equation}\label{eq: corresp decop tubing 2 }
		\begin{tikzpicture}
	[level 1/.style={level distance=0cm,sibling distance=.5cm }, level 2/.style={level distance=0.3cm,sibling distance=1.0cm }, level 3/.style={sibling distance=0.65cm,level distance=0.55cm}, sibling distance=0.6cm,baseline=2.5ex]
	\node [] {} [grow'=up]	 
	{	
		child {edge from parent[draw=none]  
		child {node [sq] (A) {}}
		}	
		child {edge from parent[draw=none]  
		child {node [sq] (B) {}	}
		}
  		child {edge from parent[draw=none]  
		child {node [sq] (B) {}	}
		}
		child {edge from parent[draw=none]  
			child {node [sq] (B) {}	}
		}
		child {edge from parent[draw=none]  
			child {node [sq] (B) {}	}
		}
	};
	\end{tikzpicture} 
	\ltimes_{v_1,v_2}^{2,2,1}
	\left(
		\begin{tikzpicture}
	[level 1/.style={level distance=0cm,sibling distance=.5cm }, level 2/.style={level distance=0.3cm,sibling distance=1.0cm }, level 3/.style={sibling distance=0.65cm,level distance=0.55cm}, sibling distance=0.6cm,baseline=2.5ex]
	\node [] {} [grow'=up]	 
	{	
		child {edge from parent[draw=none]  
		child {node [sq,label=above:\small{$v_2$}] (A) {}}
		}	
		child {edge from parent[draw=none]  
		child {node [sq] (B) {}	}
		}
	};
	\end{tikzpicture} 
	\plprod_{v_1}^2 
	\left(
	\begin{tikzpicture}
	[level 1/.style={level distance=0cm,sibling distance=.5cm }, level 2/.style={level distance=0.3cm,sibling distance=1.0cm }, level 3/.style={sibling distance=0.65cm,level distance=0.55cm}, sibling distance=0.6cm,baseline=2.5ex]
	\node [] {} [grow'=up]	 
	{	
		child {edge from parent[draw=none]  
		child {node [sq,label=above:\small{$v_1$}] (A) {}}
		}	
		child {edge from parent[draw=none]  
		child {node [sq] (B) {}	}
		}
	};
	\end{tikzpicture} 
	\right) \right)
	\mapsto
	\pgfdeclarelayer{foreground} 
	\pgfdeclarelayer{fforeground} 
	\pgfdeclarelayer{background}
	\pgfdeclarelayer{bbackground}
	\pgfsetlayers{bbackground,background,main,foreground,fforeground}
\begin{tikzpicture}
[level 1/.style={level distance=0cm,sibling distance=1.4cm }, level 2/.style={level distance=0.3cm,sibling distance=1.0cm }, level 3/.style={sibling distance=0.65cm,level distance=0.55cm}, sibling distance=0.6cm,baseline=2.5ex]
\begin{pgfonlayer}{fforeground} 
\node [] {} [grow'=up]	 
{	
	child {edge from parent[draw=none]  
		child {node [sq] (A) {}}
	}	
	child {edge from parent[draw=none]  
		child {node [sq] (B) {}	}
	}
	child {edge from parent[draw=none] 
		child {{node [sq,black] (T1lev0) {} 
				child {node [sq] (T1lev11) {}}  
				child {node [sq] (T1lev12) {}}  
				child {node [sq] (T1lev13) {} 
					child {node [sq] (T1lev23) {}} } 
				child {node [sq] (T1lev14) {}}  
			}
	}}
	child {edge from parent[draw=none]  
		child {node [sq] (T2) {}	}
	}
};
\end{pgfonlayer}
\begin{pgfonlayer}{foreground}
\draw [line width=10pt,opacity=1,black!10,line cap=round,rounded corners]  (T1lev0.center) -- (T2.center);
\end{pgfonlayer}
\begin{pgfonlayer}{background}
\draw [line width=19pt,black!65,line cap=round,rounded corners]  (T2.center) --(T1lev0.center) --(T1lev13.center) -- (T1lev14.center) --cycle;
\end{pgfonlayer}
\begin{pgfonlayer}{bbackground}
\draw [line width=27pt,opacity=1,black!20,line cap=round,rounded corners] (A.center) -- (B.center) -- (T1lev0.center)      --(T2.center) -- (T1lev14.center) --(T1lev23.center) --(T1lev11.center) --(A.center) ;
\end{pgfonlayer}
\end{tikzpicture}.
\end{equation}
Note that by construction the boundary of each tube is a horizontal forest, the maximal tube is included, and because decompositions are not trivial there are at least $2$ tubes. 
Moreover,  
	\begin{itemize}
			\item since grafting is on the left side of a vertex, the tubes satisfy condition on $\mathfrak{b}_v$ in Definition \ref{de: tube}; 
			\item since operations are performed with right most parentheses, the tubes satisfy the higher set condition for the order $<_h$. 
		\end{itemize}
Therefore, the map is well-defined. 

	The bijectivity can be shown by considering the inverse map, which is as follows.   
	Each pair of tubes $(t_i,t_{i-1})$ determines an operation  of the form $\ltimes_{v_1,...,v_k}^{n_0, n_1,...,n_k}$: 
	the integer $n_0$ is the number of roots of $\partial t_i$ that are not attached to any vertex of $t_{i-1}$ (they are the most left sided roots of $\partial t_i$ because of the higher set condition for $<_h$);  
	the integer $n_1$ is the number of next roots (from left to right) of $\partial t_i$ that are attached to a same vertex, which is $v_1$, etc. 
\end{proof}

\begin{notation}\label{notation th2}
	Let $F$ be a forest and $t$ a horizontal nested tubing of $F$. 
	For each non minimal tube $s$ of $t$ we let $s'\subset s$ be its predecessor in $t$; so, $\partial s = s\setminus s'$. 
	Recall that $s'$ is a forest, say of trees $T_1,..., T_{lg(s')}$. 
	\begin{itemize}
		\item 	We let $j(s)_0\geq 0$ be the number of roots of $F$ in $\partial s$, that is the number of vertices that are not attached to any vertex of $s'$. 
		\item For $1\leq a \leq lg(s')$, we let $j(s)_a$ be the number of vertices of $\partial s$ that are attached to vertices of $T_a$.
		\item For each tree $T_a$ of $s'$ and each vertex $v$ of $T_a$, we let $f_{T_a}^s(v)$ be the cardinal of its fiber in $\partial s$, that is the cardinal of the set $\mathfrak{b}_v\cap \partial s$ of vertices in $\partial s$ that are attached to $v$. 
		\item We let $k(T_a)\geq 0$ be the number of vertices $v$ of $T_a$ such that $f_{T_a}^s(v)\neq 0$ and we let $v_1,...,v_{k(T_a)}$ be the collection of such vertices.   
	\end{itemize}
\end{notation}

\begin{example}
Consider the horizontal nested tubing of \eqref{eq: corresp decop tubing 2 }; let $s$ be the maximal tube. 
The boundary $\partial s$ is a forest of five trees; the first two trees (\ie left most sided) are not attached to $s'$, and the next three trees are attached to the same tree of $s'$. Therefore one has $j(s)_0=2$,  $j(s)_1=3$ and $j(s)_2=0$. 
For the first tree $T_1$ of $s'$ (the corolla with 3 vertices, the root $v_r$, the most left-sided vertex $v_1$ and the other one $v_2$), one has $f^s_{T_1}(v_r)=2$, $f^s_{T_1}(v_1)=1$  and  $f^s_{T_1}(v_2)=0$.  
\end{example}

	For $F \in \Forest'_N$ and $t\in hTub(F)_{p_1,...,p_k}$,  
	we let $A(t)$ be the number of times the expression that corresponds to $t$ via Lemma \ref{lem: surjective map decompo to htubings} appears in 
	$\bsq^{\times p_1} \ast ( \cdots \ast (\bsq^{\times p_{k-1}} \ast \bsq^{\times p_k})\cdots )$. 
	
\begin{lem}\label{lem: decompo appears A times} 
	\begin{equation*}
	A(t) = \prod_{s\in t,~ s \text{ not minimal}} \shuff_{j(s)_0,...,j(s)_{lg(s')}}
		\prod_{1\leq a \leq lg(s')}
		\shuff_{ f^s_{T_a}(v_1),...,f^s_{T_{a}}(v_{k(T_a)}) }.
		\end{equation*}
	\end{lem}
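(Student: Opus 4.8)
The plan is to expand the iterated Grossman--Larson product in \eqref{eq: p& ast ...pk} from the innermost factor outwards, applying Lemma \ref{lem: calc 1 F=TA...Tk} and then Lemma \ref{lem: calc 2 F=T} at each step, and to match the combinatorial data thus produced with the invariants $j(s)_\bullet$ and $f^s_{T_a}(\bullet)$ of Notation \ref{notation th2} through the bijection of Lemma \ref{lem: surjective map decompo to htubings}. For a horizontal nested tubing $t'$ I write $M(t')$ for the quantity on the right-hand side of the asserted identity (with $t$ replaced by $t'$).

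I would set $P_k := \bsq^{\times p_k}$ and $P_i := \bsq^{\times p_i} \ast P_{i+1}$ for $1\le i<k$, so $P_1$ is the element of \eqref{eq: p& ast ...pk}. Expanding $P_i$ by iterating Lemmas \ref{lem: calc 1 F=TA...Tk} and \ref{lem: calc 2 F=T}, each resulting term is an expression of the form \eqref{eq: express mixed oper} built from $k-i$ operations $\ltimes$, and hence by Lemma \ref{lem: surjective map decompo to htubings} corresponds to a unique horizontal nested tubing $t_i\supset t_{i+1}\supset\cdots\supset t_k$ with $|\partial t_j|=p_j$, in which the $j$-th operation counted from the inside has left argument $\bsq^{\times p_j}$ and creates the tube $t_j$ by joining $\partial t_j$ to $t_{j+1}$, and $t_k=\bsq^{\times p_k}$. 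The claim to prove, by induction on $k-i$, is that such a term occurs in $P_i$ with multiplicity $M(t_i\supset\cdots\supset t_k)$. For $k-i=0$, $P_k=\bsq^{\times p_k}$ is a single term of multiplicity $1$, its only tube is minimal, and the product defining $M$ is empty, so the claim holds. For the inductive step I would fix a term of $P_{i+1}$, corresponding to a horizontal nested tubing of a forest $G=T_1\cdots T_{lg(G)}$, which occurs in $P_{i+1}$ with multiplicity $M(t_{i+1}\supset\cdots\supset t_k)$ by induction, and expand $\bsq^{\times p_i}\ast G$: Lemma \ref{lem: calc 1 F=TA...Tk} distributes the $p_i$ unit vertices along a partition $j_0+j_1+\cdots+j_{lg(G)}=p_i$ with coefficient $\shuff_{j_0,\dots,j_{lg(G)}}$, concatenating the $j_0$ leftmost ones on the left and left-grafting $\bsq^{\times j_a}$ onto $T_a$; then Lemma \ref{lem: calc 2 F=T} (equivalently \eqref{eq: braces} composed through \eqref{eq: explicit partial compo partial planar tree}) expands each $\bsq^{\times j_a}\plprod T_a$ as a sum, over choices of distinct vertices $v^a_1,\dots,v^a_{k_a}$ of $T_a$ and positive integers $n^a_1,\dots,n^a_{k_a}$ with $\sum_r n^a_r=j_a$, of $\shuff_{n^a_1,\dots,n^a_{k_a}}$ copies of $\bsq^{\times j_a}\plprod_{v^a_1,\dots,v^a_{k_a}}^{n^a_1,\dots,n^a_{k_a}}T_a$. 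The resulting term is obtained from the chosen expression for $G$ by prepending a single $\ltimes$-operation, so by Lemma \ref{lem: surjective map decompo to htubings} it corresponds to the tubing $t_i\supset t_{i+1}\supset\cdots\supset t_k$ whose new outermost tube $s:=t_i$ has predecessor $s'=t_{i+1}=G$; and directly from the definitions in Notation \ref{notation th2} one gets $j_0=j(s)_0$, $j_a=j(s)_a$, $k_a=k(T_a)$, $\{v^a_1,\dots,v^a_{k_a}\}=\{v_1,\dots,v_{k(T_a)}\}$ and $n^a_r=f^s_{T_a}(v_r)$. Therefore the step $\bsq^{\times p_i}\ast(-)$ multiplies the coefficient of this term by $\shuff_{j(s)_0,\dots,j(s)_{lg(s')}}\prod_{1\le a\le lg(s')}\shuff_{f^s_{T_a}(v_1),\dots,f^s_{T_a}(v_{k(T_a)})}$, and since the non-minimal tubes of $t_i\supset\cdots\supset t_k$ are exactly those of $t_{i+1}\supset\cdots\supset t_k$ together with $s=t_i$, multiplying by the inductive multiplicity gives $M(t_i\supset\cdots\supset t_k)$. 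Taking $i=1$ is the assertion. Here Lemma \ref{lem: surjective map decompo to htubings} is used to ensure that distinct operation-sequences remain distinct throughout the iteration, so that the coefficients genuinely multiply and no cross-cancellation can occur.

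I expect the main obstacle to be the bookkeeping in the inductive step: one must check that the partition $(j_0,\dots,j_{lg(G)})$ from Lemma \ref{lem: calc 1 F=TA...Tk} --- in particular the matching of the ``$n_0$ trees concatenated on the left'' of Notation \ref{notation: operations graft}, item \eqref{item: operations grafting/concat}, with $j_0$ --- together with the per-tree vertex-and-fiber data from Lemma \ref{lem: calc 2 F=T}, reproduces exactly the invariants $j(s)_\bullet$ and $f^s_{T_a}(\bullet)$ that Notation \ref{notation th2} attaches to the freshly created tube $s=t_i$, and that prepending one $\ltimes$-operation corresponds under Lemma \ref{lem: surjective map decompo to htubings} to adjoining one outermost tube. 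Granting these identifications, the result is an immediate multiplicativity argument.
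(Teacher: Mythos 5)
Your proof is correct and follows essentially the same route as the paper's: an induction from the innermost factor of \eqref{eq: p& ast ...pk} outward, applying Lemma \ref{lem: calc 1 F=TA...Tk} and Lemma \ref{lem: calc 2 F=T} at each step and identifying the resulting shuffle coefficients with the invariants $j(s)_\bullet$ and $f^s_{T_a}(\bullet)$ of the freshly adjoined outermost tube via Lemma \ref{lem: surjective map decompo to htubings}. The only (harmless) difference is that you start the induction at the single minimal tube with multiplicity $1$ and empty product, whereas the paper's base case is the pair of innermost tubes.
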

\begin{proof}
	The proof is by induction. We let $t= t_k\supset t_{k-1} \supset \cdots \supset t_1$.  
	Consider $t_2\supset t_1$ as a horizontal nested tubing for the forest $t_2$. 
	Recall that $lg(t_i)$ is the number of trees in the tube $t_i$. 
	One has $lg(t_1)= p_1$ and we write $T_1\cdots T_{p_1}$ the decomposition of $t_1$ into trees. 
	Note that $t_2\supset t_1$ corresponds to 
	$\bsq^{\times p_{2}} \ltimes_{v_{i_1},...,v_{i_r}}^{j(t_2)_0,...,j(t_2)_{p_1}} \bsq^{\times p_{1}}$ 
	for some subset $\{i_1,...,i_r\}$ of $\{1,...,p_1\}$ where 	$v_i$ is the unique vertex of $T_i$. 
	By Lemma \ref{lem: calc 1 F=TA...Tk} and Lemma \ref{lem: calc 2 F=T},  the expression 
    $\bsq^{\times p_{2}} \ltimes_{v_{i_1},...,v_{i_r}}^{j(t_2)_0,...,j(t_2)_{p_1}} \bsq^{\times p_{1}}$ appears 
	$\shuff_{ j(t_2)_0,...,j(t_2)_{p_1} }
	\prod_{1\leq a \leq p_1}
	\shuff_{ f^s_{T_a}(v_1),...,f^s_{T_{a}}(v_{k(T_a)}) }$ times. 
	
	Suppose the statement is true for the tubing $t_j\supset t_{j-1} \supset \cdots \supset t_1$ of $t_j$, for any $2 \leq j < k$. 
	Let $s= t_k$ and $s'=t_{k-1}$. 
	Let $T_1\cdots T_{lg(s')}$ be the decomposition of $s'$ into trees. 
	By  Lemma \ref{lem: calc 1 F=TA...Tk} and Lemma \ref{lem: calc 2 F=T},  the expression  
	$\bsq^{\times p_{k}} \ltimes_{v_{i_1},...,v_{i_r}}^{j(s)_0,...,j(s)_{lg(s')}} s'$ appears 
	\begin{equation*}
	\shuff_{j(t_2)_0,...,j(t_2)_{p_1}}
	\prod_{1\leq a \leq p_1}
	\shuff_{ f^s_{T_a}(v_1),...,f^s_{T_{a}}(v_{k(T_a)}) }
	\end{equation*}
	multiplicate by the number of times the sub-expression that corresponds to $t_{k-1}\supset t_{k-2} \supset \cdots \supset t_1$ appears. 
	Hence the result.    
\end{proof}

\begin{thm}\label{th: decompo2}
	With Notation \ref{notation th2}, for each $F$ in $\Forest_N'$ with $N\geq 2$, the coefficient $c_F$ is 
	\begin{equation*}
	\sum_{t\in hTub(F)} \frac{(-1)^{|t|-1}}{|t|}  
	\prod_{s\in t}  \frac{1}{|\partial s|!}   
	\shuff_{j(s)_0,...,j(s)_{lg(s')}}
	\prod_{1\leq a \leq lg(s')}
	\shuff_{ f^s_{T_a}(v_1),...,f^s_{T_{a}}(v_{k(T_a)}) },
	\end{equation*}
where $\shuff_{j(s)_0,...,j(s)_{lg(s')}}
	\prod_{1\leq a \leq lg(s')}
	\shuff_{ f^s_{T_a}(v_1),...,f^s_{T_{a}}(v_{k(T_a)}) }:=1$ when $s$ is minimal.
\end{thm}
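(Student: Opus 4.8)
The starting point is the expression \eqref{eq: chi = log exp}, namely
\[
\chi(\bsq) = \sum_{k\geq 1} \frac{(-1)^{k-1}}{k} \left(\sum_{j\geq 1} \frac{\bsq^{\times j}}{j!}\right)^{* k}.
\]
Expanding the $k$-th power and collecting the terms that contribute to forests with $N$ vertices, the coefficient $c_F$ receives, for each ordered partition $p_1+\dots+p_k=N$ with $p_i>0$, a contribution $\frac{(-1)^{k-1}}{k}\cdot\frac{1}{p_1!\cdots p_k!}$ times the multiplicity of $F$ inside the iterated product $\bsq^{\times p_1} \ast ( \cdots \ast (\bsq^{\times p_{k-1}} \ast \bsq^{\times p_k})\cdots )$ of \eqref{eq: p& ast ...pk}. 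So the whole computation reduces to: (i) identify which monomials of this iterated $\ast$-product equal $F$, and (ii) count each with its multiplicity. For step (i), the key observation — exactly as in the vertical case — is that since each $\bsq^{\times p_i}$ is a product of copies of the Lie element $\bsq$, Lemma \ref{lem: grafting of Lie element} together with the bialgebra properties \ref{D bial item1}, \ref{D bial item4}, \ref{D bial item5} forces every monomial appearing in the iterated product to be obtainable by the iterated mixed operations $\ltimes_{v_1,\dots,v_k}^{n_0,n_1,\dots,n_k}$ with right-most parenthesization, i.e.\ by an expression of the form \eqref{eq: express mixed oper}. Those monomials equal to $F$ are therefore parametrized by $h\mathfrak{D}(F)_{p_1,\dots,p_k}$, which by Lemma \ref{lem: surjective map decompo to htubings} is in bijection with $hTub_{p_1,\dots,p_k}(F)$.

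For step (ii), the multiplicity of the monomial corresponding to a tubing $t\in hTub_{p_1,\dots,p_k}(F)$ inside $\bsq^{\times p_1} \ast ( \cdots \ast \bsq^{\times p_k})\cdots )$ is precisely the quantity $A(t)$, which Lemma \ref{lem: decompo appears A times} evaluates as
\[
A(t) = \prod_{s\in t,~ s \text{ not minimal}} \shuff_{j(s)_0,\dots,j(s)_{lg(s')}}\,\prod_{1\leq a \leq lg(s')}\shuff_{ f^s_{T_a}(v_1),\dots,f^s_{T_{a}}(v_{k(T_a)}) }.
\]
Now I would assemble the pieces: for a fixed tubing $t$ with $|t|=k$ tubes and $|\partial s|$ the size of the boundary of the $s$-th tube, one has $p_i=|\partial t_i|$, so $\frac{1}{p_1!\cdots p_k!} = \prod_{s\in t}\frac{1}{|\partial s|!}$, and the prefactor $\frac{(-1)^{k-1}}{k}$ is $\frac{(-1)^{|t|-1}}{|t|}$. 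Multiplying by $A(t)$ and summing over all $t\in hTub(F) = \bigsqcup_{p_1+\dots+p_k=N} hTub_{p_1,\dots,p_k}(F)$ via the decomposition \eqref{eq: decompo tubings} yields exactly the claimed formula, with the convention that the shuffle factors are $1$ on the minimal tube (which has no predecessor $s'$, consistent with the product over ``$s$ not minimal'' in $A(t)$).

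The main obstacle — and the place where care is genuinely needed — is establishing rigorously that in the iterated $\ast$-product the \emph{only} monomials that arise are those of the form \eqref{eq: express mixed oper}, and that each arises with multiplicity exactly $A(t)$ and no over- or under-counting. Concretely, one must argue that the associativity of $\ast$ lets us reduce to right-most parenthesization without losing or duplicating terms, that the Lie/primitivity structure of $\bsq^{\times p_i}$ kills all graftings except those to square vertices on their left side (so that \eqref{eq: grafting sq tree to any tree} applies and the more complicated \eqref{eq: grafting lie to any tree} is not needed on the source forests), and that the shuffle multiplicities from repeated applications of Lemma \ref{lem: calc 1 F=TA...Tk} and Lemma \ref{lem: calc 2 F=T} compose exactly as in the inductive proof of Lemma \ref{lem: decompo appears A times}. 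Once that bookkeeping is in place the result is a direct substitution, so I expect the proof to be short: cite \eqref{eq: chi = log exp}, invoke Lemma \ref{lem: surjective map decompo to htubings} and Lemma \ref{lem: decompo appears A times}, and collect coefficients.
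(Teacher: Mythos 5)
Your proposal is correct and follows essentially the same route as the paper: expand $\chi(\bsq)=\log_*(\exp_\cdot(\bsq))$ over ordered partitions, identify the occurrences of $F$ in each iterated $\ast$-product via the bijection of Lemma \ref{lem: surjective map decompo to htubings}, weight each by the multiplicity $A(t)$ of Lemma \ref{lem: decompo appears A times}, and reassemble the factorials and sign using \eqref{eq: decompo tubings}. The ``bookkeeping obstacle'' you flag is exactly what Lemmas \ref{lem: calc 1 F=TA...Tk} and \ref{lem: calc 2 F=T} are there to settle, so no further work is needed.
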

\begin{proof} 
Let $F$ be in $\Forest_N'$ for $N\geq 2$. 
By using equation \eqref{eq: chi = log exp}, one can write $\chi(\bsq)$ as 
\begin{equation*}
\sum_{N\geq 1}  
\sum_{p_1+...+p_k = N, ~p_i>0} \frac{(-1)^{k-1}}{k} \frac{1}{p_1!p_2!\cdots p_k!}  \bsq^{\times p_1} \ast ( \cdots \ast (\bsq^{\times p_{k-1}} \ast \bsq^{\times p_k})\cdots ).
\end{equation*} 
If we let $D_{p_1,...,p_k}$ denote the number of times the forest $F$ appears in $ \bsq^{\times p_1} \ast ( \cdots \ast (\bsq^{\times p_{k-1}} \ast \bsq^{\times p_k})\cdots )$, then one has $c_F= \sum_{p_1+...+p_k = N, ~p_i>0} \frac{(-1)^{k-1}}{k} \frac{1}{p_1!p_2!\cdots p_k!} D_{p_1,...,p_k}$. 

Let us compute $D_{p_1,...,p_k}$. 
Consider a decomposition of $F$ of the form \eqref{eq: express mixed oper}; by Lemma \ref{lem: surjective map decompo to htubings} this amounts to considering $t\in hTub(F)_{p_1,...,p_k}(F)$. 
Such a decomposition appears exactly $A(t)$ times in $\bsq^{\times p_1} \ast ( \cdots \ast (\bsq^{\times p_{k-1}} \ast \bsq^{\times p_k})\cdots )$. 
Therefore, one has $D_{p_1,...,p_k}= \sum_{t\in hTub(F)_{p_1,...,p_k}}     A(t)$, which by Lemma  \ref{lem: decompo appears A times}, gives  \begin{equation*}
D_{p_1,...,p_k}=
\sum_{t\in hTub(F)_{p_1,...,p_k}}   
\prod_{s\in t} 
\shuff_{j(s)_0,...,j(s)_{lg(s')}}
\prod_{1\leq a \leq lg(s')}
\shuff_{ f^s_{T_a}(v_1),...,f^s_{T_{a}}(v_{k(T_a)}) }.
\end{equation*}
Finally, using the decomposition \eqref{eq: decompo tubings} of $hTub(F)$ one obtains the result. 
\end{proof}

\begin{example}
	Here is presented the computation of $c_F$ for  
	$F=$
	\begin{tikzpicture}
	[level 1/.style={level distance=0cm,sibling distance=.5cm }, level 2/.style={level distance=0.3cm,sibling distance=1.0cm }, level 3/.style={sibling distance=0.6cm,level distance=0.4cm}, sibling distance=0.6cm,baseline=2.5ex]
	\node [] {} [grow'=up]	 
	{	
		child  {edge from parent[draw=none]  
			child {node [sq] (l1)  {}	
				child {node [sq] (l2)  {} 	
					child {node [sq] (l31) {} } 	 
					child {node [sq] (l32) {} } 	 	
					child {node [sq] (l33) {} } }}}
	};
	\end{tikzpicture}. 
	
	Let us list all the possible horizontal nested tubings, which are of the form $(p_k,p_{k-1},...,p_1)$ for $1\leq k\leq 5$. 
	There are only four possibilities, which corresponds to $(1,1,1,1,1)$, $(1,2,1,1)$, $(2,1,1,1)$ and $(3,1,1)$: 
	\begin{equation*}
	\pgfdeclarelayer{foreground} 
	\pgfdeclarelayer{fforeground} 
	\pgfdeclarelayer{fbforeground} 
	\pgfdeclarelayer{background}
	\pgfdeclarelayer{bbackground}
	\pgfsetlayers{bbackground,background,main,foreground,fbforeground,fforeground}
	\begin{tikzpicture}
	[level 1/.style={level distance=0cm,sibling distance=.5cm }, level 2/.style={level distance=0.3cm,sibling distance=1.0cm }, level 3/.style={sibling distance=0.6cm,level distance=0.4cm}, sibling distance=0.6cm,baseline=2.5ex]
	\treetwothree
	\begin{pgfonlayer}{bbackground}
	\draw [line width=22pt,opacity=1,draw=black!25,fill=black!25,line cap=round,rounded corners] (l31.center)  --(l1.center) -- (l33.center) --  (l32.center) -- (l31.center) --(l1.center)--cycle;
	\end{pgfonlayer}
	\begin{pgfonlayer}{background}
	\draw [line width=18pt,opacity=1,draw=black!65,fill=black!65,line cap=round,rounded corners] (l1.center) -- (l33.center) --  (l32.center)  --(l1.center)--cycle;
	\end{pgfonlayer}
	\begin{pgfonlayer}{main}
	\draw [line width=12pt,opacity=1,draw=black!15,fill=black!15,line cap=round,rounded corners] (l1.center) -- (l2.center) --(l33.center) ;
	\end{pgfonlayer}
	\begin{pgfonlayer}{foreground}
	\draw [line width=8pt,opacity=1,draw=black!45,fill=black!45,line cap=round,rounded corners] (l1.center) -- (l2.center) ;
	\end{pgfonlayer}
	\begin{pgfonlayer}{fbforeground}
	\draw [fill=black!10, draw=black!10, line cap=round,rounded corners] (l1) circle (0.11);
	\end{pgfonlayer}
	\end{tikzpicture}
	~~
	\begin{tikzpicture}
	[level 1/.style={level distance=0cm,sibling distance=.5cm }, level 2/.style={level distance=0.3cm,sibling distance=1.0cm }, level 3/.style={sibling distance=0.6cm,level distance=0.4cm}, sibling distance=0.6cm,baseline=2.5ex]
	\treetwothree
		\begin{pgfonlayer}{background}
	\draw [line width=18pt,opacity=1,draw=black!65,fill=black!65,line cap=round,rounded corners] (l1.center) -- (l33.center) --  (l32.center)  --(l1.center)--cycle;
	\end{pgfonlayer}
	\begin{pgfonlayer}{foreground}
	\draw [line width=10pt,opacity=1,draw=black!45,fill=black!45,line cap=round,rounded corners] (l1.center) -- (l2.center) ;
	\end{pgfonlayer}
	\begin{pgfonlayer}{fbforeground}
	\draw [fill=black!10, draw=black!10, line cap=round,rounded corners] (l1) circle (0.12);
	\end{pgfonlayer}
	\end{tikzpicture}
	~~
	\begin{tikzpicture}
	[level 1/.style={level distance=0cm,sibling distance=.5cm }, level 2/.style={level distance=0.3cm,sibling distance=1.0cm }, level 3/.style={sibling distance=0.6cm,level distance=0.4cm}, sibling distance=0.6cm,baseline=2.5ex]
	\treetwothree
		\begin{pgfonlayer}{main}
	\draw [line width=13.5pt,opacity=1,draw=black!65,fill=black!65,line cap=round,rounded corners] (l1.center) -- (l2.center)   --(l33.center);
	\end{pgfonlayer}
	\begin{pgfonlayer}{foreground}
	\draw [line width=9pt,opacity=1,draw=black!45,fill=black!45,line cap=round,rounded corners] (l1.center) -- (l2.center) ;
	\end{pgfonlayer}
	\begin{pgfonlayer}{fbforeground}
	\draw [fill=black!10, draw=black!10, line cap=round,rounded corners] (l1) circle (0.11);
	\end{pgfonlayer}
	\end{tikzpicture}
	~~
		\begin{tikzpicture}
	[level 1/.style={level distance=0cm,sibling distance=.5cm }, level 2/.style={level distance=0.3cm,sibling distance=1.0cm }, level 3/.style={sibling distance=0.6cm,level distance=0.4cm}, sibling distance=0.6cm,baseline=2.5ex]
	\treetwothree
		\begin{pgfonlayer}{foreground}
	\draw [line width=12pt,opacity=1,draw=black!65,fill=black!65,line cap=round,rounded corners] (l1.center) -- (l2.center) ;
	\end{pgfonlayer}
	\begin{pgfonlayer}{fbforeground}
	\draw [fill=black!15, draw=black!15, line cap=round,rounded corners] (l1) circle (0.12);
	\end{pgfonlayer}
	\end{tikzpicture}
	\end{equation*}
	In all those cases there are no shuffles involved because all tubes are trees and there is only one vertex which has a non trivial fiber. 
	One obtains 
	\begin{equation*}
	c_F= \frac{1}{5} (1\times 1 \times 1 \times 1\times 1) - \frac{1}{4} (1\times 1 \times \frac{1}{2!} \times 1 + 1\times 1 \times 1 \times  \frac{1}{2!}) + \frac{1}{3} (1\times 1 \times \frac{1}{3!}) = \frac{1}{180}.   
	\end{equation*} 
\end{example}
\begin{example}
	Here is the computation of $c_F$ for $F=$
	\begin{tikzpicture}
	[level 1/.style={level distance=0cm,sibling distance=.5cm }, level 2/.style={level distance=0.3cm,sibling distance=1.0cm }, level 3/.style={sibling distance=0.5cm,level distance=0.5cm}, sibling distance=0.6cm,baseline=2.5ex]
	\node [] {} [grow=up]	 
	{	child  { edge from parent[draw=none]  child {node [sq] (A)  {} 	child {node [sq]  (B) {} 		} 		}}
		child { edge from parent[draw=none]   child {node [sq] (C)  {} 	child {node [sq]  (D) {} 		} 		}}		
	};
	\end{tikzpicture}. 
	The horizontal nested tubings are listed in Example \ref{example: horizontal tubings} and correspond to 
	$(2,2)$, $(2,1,1)$,  $(1,2,1)$, $(1,1,2)$ (two possible horizontal tubings), and $(1,1,1,1)$ (three possibilities).   
	In the first tubing, for $s$ the maximal tube, its predecessor $s'$ has two trees $T_1$ and $T_2$; one has $j(s)_0 = 0$ (there is no unattached vertices in $s$), $j(s)_1=1$ and $j(s)_2=1$; and, $f^s_{T_1}(v)= 1$ and $f^s_{T_2}(v_1)=1$. 
	Therefore $\shuff_{j(s)_0,...,j(s)_{lg(s')}}
	\prod_{1\leq a \leq lg(s')}
	\shuff_{ f^s_{T_a}(v_1),...,f^s_{T_{a}}(v_{k(T_a)}) } = \shuff_{1,1} \times \shuff_{1} \times \shuff_{1}= 2$. 
	 Doing this for each tube and tubing, one obtains, 
	\begin{equation*}
	c_F= -\frac{1}{2} (\frac{1}{2!}\times\frac{1}{2!}\times 2) + 
	\frac{1}{3} (\frac{1}{2!}\times 2  + \frac{1}{2!}\times 2 + \frac{1}{2!} +\frac{1}{2!}) 
	-\frac{1}{4} (1 + 1 + 1) = 0.   
	\end{equation*} 
	Of course, since $\chi(\bsq)$ is a Lie element, we already knew that $c_F=0$. 
\end{example}

To end this part we give the first four terms of $\chi(\bsq)$:   
\begin{align*}
\chi_1(\bsq) &= 
\begin{tikzpicture}
[level 1/.style={level distance=0cm,sibling distance=.5cm }, level 2/.style={level distance=0.3cm,sibling distance=1.0cm }, level 3/.style={sibling distance=0.6cm,level distance=0.5cm}, sibling distance=0.6cm,baseline=1.5ex]
\node [] {} [grow=up]	 
{child  {edge from parent[draw=none]  child {node [sq] (A)  {}  }}
};
\end{tikzpicture}
, ~~
\chi_2(\bsq) =  -\frac{1}{2}
\begin{tikzpicture}
[level 1/.style={level distance=0cm,sibling distance=.5cm }, level 2/.style={level distance=0.3cm,sibling distance=1.0cm }, level 3/.style={sibling distance=0.6cm,level distance=0.4cm}, sibling distance=0.6cm,baseline=2.5ex]
\node [] {} [grow=up]	 
{child  {edge from parent[draw=none]  child {node [sq] (A)  {} child {node [sq] (B) {} }		}}
};
\end{tikzpicture}
,~~
\chi_3(\bsq) = 
\frac{1}{3}
\begin{tikzpicture}
[level 1/.style={level distance=0cm,sibling distance=.5cm }, level 2/.style={level distance=0.3cm,sibling distance=1.0cm }, level 3/.style={sibling distance=0.6cm,level distance=0.4cm}, sibling distance=0.6cm,baseline=2.5ex]
\node [] {} [grow=up]	 
{child  {edge from parent[draw=none]  child {node [sq] (A)  {} child {node [sq] (B) {} child {node [sq] {}}  }		}}
};
\end{tikzpicture}
+
\frac{1}{12}
\begin{tikzpicture}
[level 1/.style={level distance=0cm,sibling distance=.5cm }, level 2/.style={level distance=0.3cm,sibling distance=1.0cm }, level 3/.style={sibling distance=0.6cm,level distance=0.4cm}, sibling distance=0.6cm,baseline=2.5ex]
\node [] {} [grow=up]	 
{	child  { edge from parent[draw=none]  child {node [sq] (A)  {} 	child {node [sq]  (B) {} 		} 		child {node [sq] (C) {}} 	 	}}
};
\end{tikzpicture}
+
\frac{1}{12}
\left(
\begin{tikzpicture}
[level 1/.style={level distance=0cm,sibling distance=.5cm }, level 2/.style={level distance=0.3cm,sibling distance=1.0cm }, level 3/.style={sibling distance=0.6cm,level distance=0.4cm}, sibling distance=0.6cm,baseline=2.5ex]
\node [] {} [grow=up]	 
{child  {edge from parent[draw=none]  child {node [sq] (A)  {} 		}}
	child  {edge from parent[draw=none]  child {{node [sq] (l1) {} child {node [sq] {}} } }    }
};
\end{tikzpicture}
-
\begin{tikzpicture}
[level 1/.style={level distance=0cm,sibling distance=.5cm }, level 2/.style={level distance=0.3cm,sibling distance=1.0cm }, level 3/.style={sibling distance=0.6cm,level distance=0.4cm}, sibling distance=0.6cm,baseline=2.5ex]
\node [] {} [grow=up]	 
{child  {edge from parent[draw=none]  child {node [sq] (A)  {} 	child {node [sq]  (B) {} 		} 		}}
	child {edge from parent[draw=none] child  {	 {node [sq,black] (l1) {} 
			}
	}}
};
\end{tikzpicture}
\right) 
\text{ and } 
\\
\chi_4(\bsq) &=
-\frac{1}{4}
\begin{tikzpicture}
[level 1/.style={level distance=0cm,sibling distance=.5cm }, level 2/.style={level distance=0.3cm,sibling distance=1.0cm }, level 3/.style={sibling distance=0.6cm,level distance=0.4cm}, sibling distance=0.6cm,baseline=2.5ex]
\node [] {} [grow=up]	 
{child  {edge from parent[draw=none]  child {node [sq] (A)  {} child {node [sq] (B) {} child {node [sq] {} child {node [sq] {}} }  }		}}
};
\end{tikzpicture}
-
\frac{1}{12}
\begin{tikzpicture}
[level 1/.style={level distance=0cm,sibling distance=.5cm }, level 2/.style={level distance=0.3cm,sibling distance=1.0cm }, level 3/.style={sibling distance=0.6cm,level distance=0.4cm}, sibling distance=0.6cm,baseline=2.5ex]
\node [] {} [grow=up]	 
{	child  {edge from parent[draw=none]  
		child {node [sq] {}	child {node [sq] (A)  {} 	
				child {node [sq] {} 	} 	 child {node [sq] {} } 	 	
		} }
	}
};
\end{tikzpicture}
-
\frac{1}{12}
\begin{tikzpicture}
[level 1/.style={level distance=0cm,sibling distance=.5cm }, level 2/.style={level distance=0.3cm,sibling distance=1.0cm }, level 3/.style={sibling distance=0.6cm,level distance=0.4cm}, sibling distance=0.6cm,baseline=2.5ex]
\node [] {} [grow=up]	 
{	child  { edge from parent[draw=none]  child {node [sq] (A)  {} 	child {node [sq]  (B) {} 		} 		
			child {node [sq] (C) {} child {node [sq] {}} } 	 	}}
};
\end{tikzpicture}
+
\frac{1}{24}
\left(
\begin{tikzpicture}
[level 1/.style={level distance=0cm,sibling distance=.5cm }, level 2/.style={level distance=0.3cm,sibling distance=1.0cm }, level 3/.style={sibling distance=0.6cm,level distance=0.4cm}, sibling distance=0.6cm,baseline=2.5ex]
\node [] {} [grow=up]	 
{child  {edge from parent[draw=none]  child {node [sq] (A)  {} 	child {node [sq]  (B) {} }  child {node [sq] {}} }  } 
	child {edge from parent[draw=none] child  {	 {node [sq,black] (l1) {} 
			}
	}}
};
\end{tikzpicture}
-
\begin{tikzpicture}
[level 1/.style={level distance=0cm,sibling distance=.5cm }, level 2/.style={level distance=0.3cm,sibling distance=1.0cm }, level 3/.style={sibling distance=0.6cm,level distance=0.4cm}, sibling distance=0.6cm,baseline=2.5ex]
\node [] {} [grow=up]	 
{child  {edge from parent[draw=none]  child {node [sq] (A)  {} 		}}
	child  {edge from parent[draw=none]  child {{node [sq] (l1) {} child {node [sq] {}  } child {node [sq] {}} } }    }
};
\end{tikzpicture}
\right) 
+
\frac{1}{12}
\left(
\begin{tikzpicture}
[level 1/.style={level distance=0cm,sibling distance=.5cm }, level 2/.style={level distance=0.3cm,sibling distance=1.0cm }, level 3/.style={sibling distance=0.6cm,level distance=0.4cm}, sibling distance=0.6cm,baseline=2.5ex]
\node [] {} [grow=up]	 
{child  {edge from parent[draw=none]  child {node [sq] (A)  {} 	child {node [sq]  (B) {}  child {node [sq] {} }		} 		}}
	child {edge from parent[draw=none] child  {	 {node [sq,black] (l1) {} 
			}
	}}
};
\end{tikzpicture}
-
\begin{tikzpicture}
[level 1/.style={level distance=0cm,sibling distance=.5cm }, level 2/.style={level distance=0.3cm,sibling distance=1.0cm }, level 3/.style={sibling distance=0.6cm,level distance=0.4cm}, sibling distance=0.6cm,baseline=2.5ex]
\node [] {} [grow=up]	 
{child  {edge from parent[draw=none]  child {node [sq] (A)  {} 		}}
	child  {edge from parent[draw=none]  child {{node [sq] (l1) {} child {node [sq] {} child {node [sq] {}} } } }    }
};
\end{tikzpicture}
\right).
\end{align*}

\section*{Acknowledgments}
The second author was supported by grant ``\#2018/19603-0, S\~ao Paulo Research Foundation (FAPESP)''. 

\bibliographystyle{abbrv}

\begin{thebibliography}	{99}


   \bibitem{AG}
   A.~Agrachev, R.~Gramkelidze.
   \newblock{Chronological algebras and nonstationary vector fields}
   \newblock{\em J. Sov. Math.} {\bf 17}, 1650--1675, 1981. 

    \bibitem{bai}
	C.~Bai.
	\newblock {Bijective 1-cocyles and classification of 3-dimensional left-symmetric algebras.}
	\newblock {\em Comm. Algebra} 37 (2009) 1016--1057.
	
	\bibitem{bai-guo-ni}
	C.~Bai, L.~Guo, X.~Ni.
	\newblock {Non-abelian Generalized Lax Pairs, the Classical Yang-Baxter Equation and PostLie Algebras.}
	\newblock {\em Commun.Math.Phys.} {\bf 297} (2010) 553--596.
	
	\bibitem{bandiera}
    R.~Bandiera.
    \newblock {Formality of Kapranov brackets in K\"ahler geometry via pre-Lie deformation theory}.
    \newblock {\em Int. Math. Res. Not. IMRN} no.21, 6626--6655, 2016.
    
    \bibitem{BS} 
    R.~Bandiera, F.~Schaetz.
    \newblock {Eulerian idempotent, pre-Lie logarithm and combinatorics of trees.}
    \newblock {\em arXiv preprint math/1702.08907}, 2017.
    
    
    \bibitem{baus}
	O.~Baues. 
	\newblock Left-Symmetric Algebras for $\mathfrak{gl}(n)$.
	\newblock {\em Trans. Amer. Math. Soc.} Vol. 351, no. 7 (1999) 2979--2996.
    
    \bibitem{Bordemann}
    M.~Bordemann.
    \newblock {Generalized Lax pairs, the modified classical Yang-Baxter equation and affine geometry of Lie groups.} 
    \newblock {\em Commun.Math.Phys.} {\bf 135} (1990) 201--216
	
	\bibitem{BCOR}
	S.~Blanes, F.~Casas, J.~A.~Oteo, J.~Ros.
	\newblock The Magnus expansion and some of its applications.
	\newblock {\em Phys. Rep.} {\bf 470} (2009), 151--238.
	
	
	
	
	\bibitem{Brouder}
	Ch.~Brouder.
	\newblock Runge-Kutta methods and renormalization.
	\newblock {\em The European Physical Journal C-Particles and Fields} Vol. 12, (2000) 521?-534.	
	
	\bibitem{burderev} D.~Burde. 
	\newblock Left symmetric algebras, or pre-Lie algebras, in geometry and physics.  
	\newblock {\em Cent. Eur. J. Math.} 4 (2006), no. 3, 323--357.
	
	\bibitem{bdv}
	D.~Burde, K.~Dekimpe, K.~Verkamme.
	\newblock Affine Actions on Lie Groups and Post-Lie Algebra Structure.
	\newblock {\em Linear Algebra App.} Vol. 437 Issue 5 (2012) 1250--1263.
	
	\bibitem{B-N}
	Y.~Billig, K-H.~Neeb.
	\newblock On the cohomology of vector fields on parallelizable manifolds.
	\newblock {\em Ann. Ist. Fourier} (Grenoble) Tome 58, no. 6  (2008) 1937--1982.
	
	bibitem{BCOR}
	S.~Blanes, F.~Casas, J.~A.~Oteo, J.~Ros.
	\newblock The Magnus expansion and some of its applications.
	\newblock {\em Phys. Rep.} {\bf 470} (2009), 151--238.
	
	\bibitem{CarrDevadoss}
	M. Carr, S. Devadoss. 
	\newblock Coxeter complexes and graph-associahedra. 
	\newblock {\em Topology Appl.} {\bf 153} (2006) 2155--2168.
	
	
	\bibitem{CL}
    F.~Chapoton and M.~Livernet.
    \newblock Pre-{L}ie algebras and the rooted trees operad. 
    \newblock {\em International Mathematics Research Notices} (2001), no.8 , 395--408.

    \bibitem{CP}
    F.~Chapoton, F.~Patras.
    \newblock Enveloping algebras of preLie algebras, Solomon Idempotents and the Magnus formula.
    \newblock {\em Int. J. Algebra Comput.} {\bf 23}(4) (2013) 853.     
	
	\bibitem{CEFMK}
	C.~Curry, K.~Ebrahimi-Fard, H.~Munthe-Kaas.
	\newblock What is a post-Lie algebra and why is it useful in geometric integration.
	\newblock Lecture Notes in Computational Science and Engineering. vol. 126. 2019.
	
	\bibitem{CEFO}
	C.~Curry, K.~Ebrahimi-Fard, O.~Brynjulf.
	\newblock The Magnus expansion and post-Lie algebras.
	\newblock {arXiv preprint math/1808.04156}, 2018.
	
	
	\bibitem{DSV}
	V.~Dotsenko, S.~Shadrin, B.~Vallette.
	\newblock {Pre-Lie deformation theory}
	\newblock {\em Mosc. Math. J.} 16 no.3 505--543, 2016.
	

	
	
	\bibitem{KLM} 
	K.~Ebrahimi-Fard, A.~ Lundervold and H.~Z. Munthe-Kaas.
	\newblock On the {L}ie enveloping algebra of a post-lie algebra.
    \newblock {\em Journal of Lie Theory} 25 (2015), 1139--1165.
	
	\bibitem{EFLMMK}
	K.~Ebrahimi-Fard, A.~ Lundervold, I.~Mencattini, and H.~Z. Munthe-Kaas.
	\newblock Post-Lie Algebras and Isospectral Flows.
	\newblock {\em SIGMA} {\bf 25}(11) 093 (2015).
	
	
	\bibitem{Kur-Man}
	K.~Ebrahimi-Fard, D.~Manchon.
	\newblock A Magnus-and Fer-Type Formula in Dendriform Algebras.
	\newblock {\em Found. Comput. Math.} 9 (2009) 295-316.
	
	
	\bibitem{EFMM}
	K.~Ebrahimi-Fard, I.~Mencattini, H.~Munthe-Kaas.
	\newblock Post-Lie algebras and factorization theorems.
	\newblock {\em J. of Geometry and Physics}, 119 2017, 19--33. 
	
	
	\bibitem{KI}
	K.~Ebrahimi-Fard, I.~Mencattini.
	\newblock Post-Lie algebras, Factorization Theorems and Isospectral Flows.
	\newblock in {\em Discrete Mechanics, Geometric Integration and Lie-Butcher Series},pag. 231--285, Ed. K. Ebrahimi-Fard and M. Li\~n\'an Barbero, Springer  
	          Proceedings in Mathematics and Statistics 2018.
	
	
	\bibitem{EFP}
	K.~Ebrahimi-Fard, F.~Patras.
	\newblock{The Pre-Lie Structure of the Time-Ordered Exponential}.
	\newblock{Lett. Math. Phys.} {\bf 104} no.10, 1281--1302, 2014.

	
    \bibitem{EFP1}
	K.~Ebrahimi-Fard, F.~Patras.
	\newblock From iterated integrals and chronological calculus to Hopf and Rota-Baxter algebras.
	\newblock {\em arXiv preprint math/1911.08766}, 2019.

	
	\bibitem{F-MK}
	 G.~Fl{\o}ystad, H.~Z.~Munthe-Kaas.
	\newblock Pre and post-Lie algebra: the algebro-geometric view.
	\newblock Abel Symposium Series.
	\newblock Springer 2017.
	
	\bibitem{Gersten} M.~Gerstenhaber.
	\newblock The cohomology structure of an associative ring.
	\newblock {\em Annals of Mathematics} 78, no.2 (1963), 267-288.
	
	\bibitem{HLW}
	E.~Hairer, C.~Lubich, G.~Wanner.
	\newblock Geometric Numerical Integration: Structure-Preserving Algorithms for Ordinary Differential Equations.
	\newblock Springer Series in Computational Mathematics (31).
	\newblock Springer; 2nd ed. 2006. 2nd printing 2010 edition.
	
	
	\bibitem{Higert-Neeb}
	J.~Hilgert, K-H.~Neeb.
	\newblock Structure and Geometry of Lie Groups.
	\newblock Springer Monographs in Mathematics.
	\newblock Springer 2012.
	
	
	
	\bibitem{Lue}
	A.~Lue.
	\newblock Crossed homomorphisms of Lie algebras
	\newblock {\em Proc. Cambridge Philos. Soc.} 62 (1966) 577--581.

	
	
	\bibitem{Kup}
	B.A.~Kupershmidt.
	\newblock {What a classical $r$-matrix really is.}
	\newblock {\em J.Nonlinear Math. Phys.} {\bf 6} (1999) 448--488.
	
	
	
	\bibitem{MKL} 
	A.~Lundervold and H.~Z.Munthe-Kaas. 
	\newblock On post-{L}ie algebras, {L}ie–butcher series and moving frames.
	\newblock {\em Foundations of Computational Mathematics}  Vol. 13 no.4, (2013) 583--613.
	
	\bibitem{Manchon} 
	 D.~Manchon.
	 \newblock {\em A short survey on pre-Lie algebras.}
	 \newblock {E.Schr\"odinger Institute Lectures in Mathematical Physics} European Mathematical Society, A. Carey Ed., 2011.
	    
	\bibitem{medina}
	A.~Medina Perea.
	\newblock Flat left-invariant connections adapted to the automorphism structure of a Lie group.
	\newblock {\em J. Differential Geom.} Vol. 16 no.3, (1981) 445--474.
	
	
	\bibitem{MQS}
	I.~Mencattini, A.~Quesney, P.~Silva.
	\newblock{Post-symmetric braces and integration of post-Lie algebras}
	\newblock {\em Journal of Algebra}, Volume 556, 2020, Pages 547-580. 
	\newblock {\url{https://doi.org/10.1016/j.jalgebra.2020.03.018}}
	
	
	\bibitem{MKSV}
	H.~Munthe-Kaas, A.~Stern, O.~Verdier.
	\newblock {Invariant connections, Lie algebra actions and foundations of numerical integration on manifolds.}
	\newblock {\em SIAM Journal on Applied Algebra and Geometry}, Vol.4 (1), (2020) 49--68.
	
	
	\bibitem{NeebLoc}
	K.H.~Neeb.
	\newblock {Towards a Lie theory of locally convex groups.}
	\newblock {\em ArXiv: 1501.06269v1.}
	
	
	\bibitem{GO1} 
	J.M.~Oudom and D.~Guin. 
	\newblock {Sur l'alg\`ebre enveloppante d'une alg\`ebre pr\'e-Lie.}
	\newblock {\em C. R. Math. Acad. Sci. Paris} 340 (2005), no. 5, 331--336.
    
    \bibitem{GO2} 
    J.M.~Oudom and D.~Guin. 
    \newblock On the Lie enveloping algebra of a pre-Lie algebra. 
    \newblock {\em J. K-Theory} 2 (2008), no. 1, 147--167.	
	
	
	\bibitem{PSTZ}
	Y.~Pei, Y.~Sheng, R.~Tang, K.~Zhao.
	\newblock {Generalized Shen-Larsson bifunctors and cohomology of crossed homomorphims.}
	\newblock {\em ArXiv:190802549v2.}
	
	\bibitem{hessian}
	H.~Shima.
	\newblock The Geometry of Hessian Structures.
	\newblock World Scientific Publishing, 2007.
	
	\bibitem{tao}
	\newblock T.~Tao.
	\newblock Hilbert's Fifth Problem and Related Topics.
	\newblock Graduate Studies in Mathematics, Vol. 153. 
	\newblock American Mathematical Society, 2014.
	
	\bibitem{Val} 
    B.~Vallette.
    \newblock Homology of generalized partition posets.
    \newblock {\em J.Pure App. Algebra} {\bf 208} (2007), no.2, 699--725.
           
    \bibitem{Vinberg}
    E.~B.~Vinberg.
    \newblock The theory of homogeneous convex cones.
    \newblock {\em Transactions of the Moscow Mathematical Society} 12 (1963) 340--403.
	   
	   \end{thebibliography}

\end{document}